\PassOptionsToPackage{hypertexnames=false,hyperfootnotes=false}{hyperref}
\documentclass[submission]{dmtcs-episciences}

\usepackage[T1]{fontenc}
\usepackage{microtype}
\usepackage{amsmath,amssymb,amsthm,mathtools}
\usepackage{graphicx}
\newcommand{\bigplus}{
  \mathop{\vcenter{\hbox{\scalebox{2.15}{$+$}}}}\displaylimits
}
\newcommand{\plus}{
  \mathop{\vcenter{\hbox{\scalebox{1.5}{$+$}}}}\displaylimits
}
\usepackage{float}
\usepackage[dvipsnames]{xcolor}
\usepackage{tikz}
\usetikzlibrary{arrows.meta,backgrounds,calc,fit,positioning,shapes.geometric}
\usepackage[round]{natbib}
\hypersetup{hidelinks}

\newtheorem{theorem}{Theorem}[section]
\newtheorem{proposition}[theorem]{Proposition}
\newtheorem{lemma}[theorem]{Lemma}
\newtheorem{corollary}[theorem]{Corollary}
\theoremstyle{definition}

\newcommand{\M}{\mathcal{M}} 
\newcommand{\R}{\mathcal{R}_{\M}} 
\newcommand{\T}{\mathcal{T}} 
\newcommand{\J}{\mathcal{J}} 
\newcommand{\Q}{\mathcal{Q}} 
\newcommand{\diam}{\operatorname{diam}} 
\newcommand{\Mob}{\operatorname{Mob}_{\mu}} 
\newcommand{\C}{\mathcal{C}} 

\author[Fres\'an-Figueroa et al.]{
  Juli\'an A. Fres\'an-Figueroa\affiliationmark{1}
  \and Il\'an A. Goldfeder\affiliationmark{2}
  \and Aldo Lozano-Pi\~na\affiliationmark{3} \thanks{Corresponding author. Email: \texttt{alozano@cua.uam.mx}.}
}

\affiliation{
  Departamento de Matem\'aticas Aplicadas y Sistemas,
  Universidad Aut\'onoma Metropolitana, Unidad Cuajimalpa, Mexico\\
  Departamento de Matem\'aticas,
  Universidad Aut\'onoma Metropolitana, Unidad Iztapalapa, Mexico\\
  Posgrado en Ciencias (Matem\'aticas),
  Universidad Aut\'onoma Metropolitana, Unidad Iztapalapa, Mexico
}

\title[Reconfiguration of mutual-visibility sets]{Reconfiguration graphs of mutual-visibility sets}

\keywords{mutual visibility, mobile mutual-visibility, reconfiguration graph, token sliding, daisy cube, complete bipartite graph, tree, cycle.}

\begin{document}
\maketitle

\begin{abstract}
A set $S\subseteq V(G)$ is a mutual-visibility set if every pair of vertices in $S$ is joined by a geodesic whose internal vertices do not belong to $S$.
We define a reconfiguration graph whose vertices are the mutual-visibility sets of $G$.
Two configurations are adjacent if one is obtained from the other by adding or removing one vertex or by sliding one token along an edge of $G$.
The subgraphs induced by configurations of a fixed cardinality are called layers.
The spanning subgraph formed by token-addition and token-removal edges is a daisy cube.
We study the effects of token-sliding edges on distances and connectivity to this skeleton.
For a connected host of order $n\ge3$, the vertex connectivity of the reconfiguration graph lies between $3$ and $n$, and both bounds are attained.
We determine several parameters of $\R(K_n)$ and describe the behavior of the model for disconnected hosts and for isometric or convex subgraphs.
We also give a component-based criterion for mobile mutual visibility.
For complete bipartite graphs, we use a description of feasible configurations to determine connectivity and distance bounds for fixed-cardinality layers.
For trees, we show that every component of a layer of cardinality at least three is isomorphic to a Cartesian product of subtrees, obtaining exact metric formulas and characterizing the connected layers.
Finally, we prove that every layer of a cycle is connected.
\end{abstract}

\section{Introduction}

Mutual visibility in graphs was introduced by \citet{DiStefano2022}.
A set $S \subseteq V(G)$ is a \emph{mutual-visibility set} if, for every pair of distinct vertices $u,v \in S$, there exists a $uv$-geodesic whose internal vertices do not belong to $S$.
We denote the family of these sets by $\M(G)$ and put $\M_k(G)=\{S\in\M(G):\lvert S\rvert=k\}$.
The maximum cardinality of a set in $\M(G)$ is the \emph{mutual-visibility number} $\mu(G)$.
Variants and structural aspects have been studied by \citet{CiceroneEtAl2023}, \citet{KorzeVesel2024}, \citet{BujtasKlavzarTian2025} and \citet{TonnyShikhi2026}, among others.

Extremal parameters give the largest feasible cardinality; reconfiguration studies how feasible solutions can be transformed into one another through allowed local operations.
For an introduction to reconfiguration problems, see \citet{Nishimura2018}.

Here, configurations are the mutual-visibility sets of a graph $G$.
We allow token addition/removal (TAR) and token sliding (TS). A TS move transfers a token to an unoccupied adjacent vertex.
The resulting graph is denoted by $\R(G)$; we call $G$ the \emph{host graph}.
The configurations of cardinality $k$ induce the $k$-th layer, denoted by $\R^k(G)$.
Only token slides occur within layers, so $\R^k(G)$ is an induced subgraph of the $k$-token graph of $G$ \citep{FabilaMonroyEtAl2012}.
A mixed reconfiguration model involving the same operations was considered for domination in \citet{MessingerPipes2026}.

Token sliding also appears in mobile mutual visibility \citep{DettlaffEtAl2025,ShallcrossEtAl2025}.
The two models ask different questions.
Mobility requires a sequence starting from one configuration in which the tokens collectively visit every vertex of the host graph, whereas reconfiguration concerns reachability between two prescribed configurations.
We characterize mobility through a covering condition on a connected component of a layer.

We first study the full graph $\R(G)$.
Heredity identifies its TAR skeleton as a daisy cube.
We examine how TS edges affect distances and connectivity and study disconnected hosts as well as isometric and convex subgraphs, and determine the parameters for complete graphs.

We then study fixed-cardinality layers.
For complete bipartite graphs, we use feasible types to determine connectivity of fixed-cardinality layers and to bound distances within connected layers.
Earlier results on maximal sets and visibility polynomials provide context for the accompanying enumeration \citep{BresarYero2024,BujtasKlavzarTian2025}.
For trees, active branching vertices act as barriers, leading to a Cartesian-product decomposition of each component, giving exact order, size, and diameter formulas.
Every layer of a cycle is connected.
Our main contributions are structural results about reconfiguration, together with some enumeration formulas; known extremal and mobile values are cited as inputs or recovered structurally.

\section{Preliminaries and the reconfiguration model}
\label{sec:preliminaries}

All graphs are finite, simple, undirected, and connected, unless stated otherwise.
We follow \citet{diestel2024graph} for standard graph terminology and notation.
For a connected graph $H$, we write $d_H$ for its distance function and $\diam(H)$ for its diameter.

Equivalently, a set $S\subseteq V(G)$ belongs to $\M(G)$ if every pair of distinct vertices $u,v\in S$ is joined by a $uv$-geodesic $P$ such that $V(P)\cap S=\{u,v\}$.
We will repeatedly use the following hereditary property.

\begin{proposition}[\citealp{CiceroneEtAl2023}]
\label{prop:cited-heredity}
If $S$ is a mutual-visibility set of a graph $G$ and $S' \subseteq S$, then $S'$ is also a mutual-visibility set of $G$.
\end{proposition}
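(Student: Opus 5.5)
The plan is to argue directly from the definition, using the equivalent formulation stated just above. We take arbitrary distinct $u,v\in S'$ and exhibit a $uv$-geodesic whose internal vertices avoid $S'$. The inclusion $S'\subseteq S$ is all that is needed.

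First, since $u,v\in S'\subseteq S$ and $S\in\M(G)$, there is a $uv$-geodesic $P$ in $G$ with $V(P)\cap S=\{u,v\}$. Second, we intersect with $S'$. Because $S'\subseteq S$, we get $V(P)\cap S'\subseteq V(P)\cap S=\{u,v\}$. Conversely, $u,v\in S'$, so in fact $V(P)\cap S'=\{u,v\}$. Third, the path $P$ is a geodesic in $G$ itself, and the host graph has not changed. So $P$ remains a $uv$-geodesic, and its internal vertices avoid $S'$.

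Since $u,v$ were arbitrary, $S'\in\M(G)$. The degenerate cases $\lvert S'\rvert\le1$, including $S'=\emptyset$, hold vacuously, since there are no pairs to check.

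There is no genuine obstacle. The only point worth stressing is that the geodesic condition refers to distances in $G$, which do not depend on $S$ or $S'$. Hence the witness path chosen for $S$ serves unchanged for $S'$.
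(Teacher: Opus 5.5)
Your proof is correct and complete: the witnessing geodesic $P$ for $u,v$ with respect to $S$ also satisfies $V(P)\cap S'=\{u,v\}$ because $S'\subseteq S$, and being a geodesic depends only on $G$. The paper does not prove this statement itself (it cites Cicerone et al.), and your direct argument is the standard justification behind that citation.
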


Consequently, $\M_k(G) \neq \emptyset$ for every $0 \le k \le \mu(G)$.
The \emph{mutual-visibility reconfiguration graph} $\R(G)$ has vertex set $\M(G)$, and two distinct configurations $S,T \in \M(G)$ are adjacent if either $\lvert S \triangle T \rvert = 1$, or there exist vertices $x,y \in V(G)$ such that $S \setminus T = \{x\}$ and $T \setminus S = \{y\}$, with $xy \in E(G)$.
Moves of the first type are \emph{token addition/removal} (TAR) move; moves of the second type are \emph{token-sliding} (TS) moves from $x$ to its unoccupied neighbor $y$ (Figure~\ref{fig:moves}).
For $0 \le k \le \mu(G)$, the subgraph of $\R(G)$ induced by $\M_k(G)$ is denoted by $\R^k(G)$ and is called the $k$-th layer.
Thus TAR moves join distinct layers, and TS moves stay within a layer. We call a move \emph{admissible} if the resulting configuration belongs to $\M(G)$.

\begin{figure}[h]
\centering
\begin{tikzpicture}[
  v/.style={circle,draw,minimum size=3mm,inner sep=0pt},
  token/.style={v,fill=red!65},
  move/.style={-{Latex[length=2.4mm]},thick,blue!55!black}
]
  \node[token] (a1) at (0,0) {};
  \node[token] (a2) at (1,0) {};
  \node[v]     (a3) at (.5,.82) {};
  \draw (a1)--(a2)--(a3)--(a1);
  \node[token] (b1) at (3,0) {};
  \node[token] (b2) at (4,0) {};
  \node[token] (b3) at (3.5,.82) {};
  \draw (b1)--(b2)--(b3)--(b1);
  \draw[move] (1.35,.70)--(2.65,.70) node[midway,above]{add};
  \draw[move] (2.65,.40)--(1.35,.40) node[midway,below]{remove};
  \node at (2,-.48) {TAR move};

  \node[token] (c1) at (6,0) {};
  \node[token] (c2) at (7,0) {};
  \node[v]     (c3) at (6.5,.82) {};
  \draw (c1)--(c2)--(c3)--(c1);
  \node[v]     (d1) at (9,0) {};
  \node[token] (d2) at (10,0) {};
  \node[token] (d3) at (9.5,.82) {};
  \draw (d1)--(d2)--(d3)--(d1);
  \draw[move] (7.35,.40)--(8.65,.40) node[midway,above]{slide};
  \node at (8,-.48) {TS move};
\end{tikzpicture}
\caption{The two move types in $\R(G)$. Red vertices carry tokens. Every displayed configuration belongs to $\M(G)$. A fixed-cardinality layer contains only TS moves.}
\label{fig:moves}
\end{figure}

For $0 \le k \le \lvert V(G) \rvert$, the vertices of the \emph{$k$-token graph} $\T_k(G)$ are the $k$-subsets of $V(G)$; two such sets $S$ and $T$ are adjacent
if and only if $S \triangle T = \{x,y\}$, with $\{x,y\} \in E(G)$.
Token graphs were introduced by \citet{FabilaMonroyEtAl2012}, who proved the following result.

\begin{theorem}[\citealp{FabilaMonroyEtAl2012}]
\label{thm:cited-token-connectivity}
Let $G$ be a connected graph, and let $1 \le k < \lvert V(G)\rvert$.
Then $\T_k(G)$ is connected with diameter at least $k(\diam(G)-k+1)$ and at most $k\diam(G)$.
\end{theorem}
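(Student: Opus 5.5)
The plan is to handle connectivity and the two diameter bounds of $\T_k(G)$ separately. Throughout, we identify a $k$-subset with a placement of $k$ indistinguishable tokens on $V(G)$. A TS move in $\T_k(G)$ slides one token along an edge $xy$ of $G$ onto an unoccupied vertex.

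\textbf{Connectivity and the upper bound.} Since $\T_k(G)$ is a spanning subgraph of $\T_k(T)$ for any spanning tree $T$ of $G$, with $d_T\ge d_G$, it is tempting to reduce to trees. However, a spanning tree may have larger diameter, so we argue in $G$ directly.

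Fix $A,B\in\T_k(G)$. We induct on $\lvert A\setminus B\rvert$, aiming for the bound $d(A,B)\le\sum_i d_G(a_i,b_i)\le k\diam(G)$ for a suitable matching. Choose $a\in A\setminus B$ and $b\in B\setminus A$ minimizing $d_G(a,b)$, and take a geodesic $P=a=v_0,v_1,\dots,v_m=b$.

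Interior vertices of $P$ may carry tokens of $A$. We use the standard ``relay'' trick: let $v_{j_1},\dots,v_{j_r}$ be the occupied interior vertices in order. Slide the token at $v_{j_r}$ forward to $b$, then the token at $v_{j_{r-1}}$ to $v_{j_r}$, and so on, finally moving the token at $a$ to $v_{j_1}$. Each step runs along an unoccupied segment of $P$. The total is $m$ slides, and the result is $A-a+b$, which has one fewer discrepancy with $B$.

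Summing over $\lvert A\setminus B\rvert\le k$ steps, each of cost at most $\diam(G)$, gives a walk of length at most $k\diam(G)$. This proves connectivity and the upper bound simultaneously.

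\textbf{Lower bound.} Pick a diametral path $u=w_0,\dots,w_D$ with $D=\diam(G)$. If $k>D+1$ the bound is nonpositive and trivial, so assume $k\le D+1$. Let $A=\{w_0,\dots,w_{k-1}\}$ and $B=\{w_{D-k+1},\dots,w_D\}$.

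We use the potential $\phi(S)=\sum_{s\in S} d_G(u,s)$. A single slide changes one vertex by an edge, so it changes $\phi$ by at most $1$. Hence $d_{\T_k}(A,B)\ge\phi(B)-\phi(A)$. Since $d_G(u,w_i)=i$ along a geodesic from $u$, the values are $\phi(A)=\sum_{i<k} i$ and $\phi(B)=\sum_{i=D-k+1}^{D} i$. Their difference is $k(D-k+1)$, which is the required lower bound.

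\textbf{Main obstacle.} The delicate step is the relay in the upper bound. One must check that the segments between consecutive occupied vertices of $P$ are free at the time of each slide. They are, because we process from the far end, and the endpoint $b$ is unoccupied because $b\notin A$. One must also check that the relay does not disturb the tokens already in $A\cap B$: each relayed token ends on a previously occupied vertex, or on $b$, so the occupied set changes only by removing $a$ and adding $b$.

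Given this, the minimality of $d_G(a,b)$ is not even needed; any pairing works. The bound $\lvert A\setminus B\rvert\le k$ then yields $k\diam(G)$.
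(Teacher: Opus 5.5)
The paper does not prove this theorem; it only quotes it from Fabila-Monroy et al., so there is no internal argument to compare yours with. Your proof is correct and self-contained.

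\textbf{Upper bound and connectivity.} The relay along an $ab$-geodesic turns $A$ into $(A\setminus\{a\})\cup\{b\}$ in exactly $d_G(a,b)$ slides, and it changes the occupied set only by removing $a$ and adding $b$. Induction on $\lvert A\setminus B\rvert$ then gives connectivity together with
\[
d_{\T_k(G)}(A,B)\le\lvert A\setminus B\rvert\,\diam(G)\le k\diam(G).
\]
In fact this gives the sharper bound $\min\{k,\lvert V(G)\rvert-k\}\diam(G)$.

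\textbf{Lower bound.} The potential $\phi(S)=\sum_{s\in S}d_G(u,s)$ changes by at most one per slide. It differs by exactly $k(\diam(G)-k+1)$ between the two end blocks of a diametral geodesic. The cases $k\ge\diam(G)+1$ are trivial because the bound is then nonpositive.

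\textbf{One slip.} It occurs only in the remark you discard. For a spanning tree $T$ of $G$, the inclusion goes the other way: $\T_k(T)$ is a spanning subgraph of $\T_k(G)$. That direction reduces connectivity to the tree case, but, as you say, it does not give the diameter bound. The slip does not affect your argument.
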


The cases $k=0$ and $k=\lvert V(G)\rvert$ each give a single vertex.
By definition, \(\R^k(G)\) is the subgraph of \(\T_k(G)\) induced by \(\M_k(G)\).

The model also admits a hypercube representation.
The \emph{$n$-dimensional hypercube} $\Q_n$ has vertex set $\{0,1\}^n$, with two vectors adjacent if they differ in exactly one coordinate.
Let $n = \lvert V(G) \rvert$ and identify each subset of $V(G)$ with its incidence vector in $\{0,1\}^n$.
Under this identification, the TAR edges of $\R(G)$ form the subgraph of $\Q_n$ induced by the incidence vectors of the sets in $\M(G)$.
The full graph $\R(G)$ is obtained from this induced subgraph by adding the TS edges within each layer.

We next introduce mobile mutual visibility.
In the mobile mutual-visibility model, an admissible TS move is called a \emph{legal mobile move}.
A set $S \in \M_k(G)$ is a \emph{mobile mutual-visibility set} if there exists a finite sequence $S_0,S_1,\ldots,S_r$ of configurations in $\M_k(G)$, with $S_0=S$, such that consecutive configurations differ by a legal mobile move and $\bigcup_{i=0}^{r} S_i = V(G)$.
The \emph{mobile mutual-visibility number} of $G$, denoted by $\Mob(G)$, is the largest integer $k$ such that $\M_k(G)$ contains a mobile mutual-visibility set.
\citet{DettlaffEtAl2025} introduced this model. The next criterion follows from the reversibility of legal moves for mobile position problems; compare \citet{DettlaffEtAl2025} and \citet{ShallcrossEtAl2025}.

\begin{proposition}[Component criterion for mobility]
\label{prop:mobile-component}
Let $G$ be a graph, let $0\le k\le\mu(G)$, and let $S\in\M_k(G)$.
Let $\mathcal C_k(S)$ denote the connected component of $\R^k(G)$ containing $S$.
Then $S$ is a mobile mutual-visibility set if and only if $\bigcup_{X \in V(\mathcal C_k(S))} X = V(G)$.
\end{proposition}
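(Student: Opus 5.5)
The plan is to prove the two implications separately. The only tools needed are the definition of a mobile mutual-visibility set and the fact that the edges of $\R^k(G)$ are exactly the admissible TS moves between configurations in $\M_k(G)$, that is, exactly the legal mobile moves. Adjacency in $\R(G)$ is symmetric, so every legal mobile move can be reversed. This reversibility is the key point.

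For the forward direction, I will assume $S$ is a mobile mutual-visibility set and take a witnessing sequence $S_0=S,S_1,\ldots,S_r$ in $\M_k(G)$. Consecutive terms differ by a legal mobile move, so they are adjacent in $\R^k(G)$. Hence the sequence is a walk in $\R^k(G)$ starting at $S$, and every $S_i$ lies in $V(\mathcal C_k(S))$. It follows that
\[
V(G)=\bigcup_{i=0}^{r} S_i\subseteq \bigcup_{X\in V(\mathcal C_k(S))} X\subseteq V(G),
\]
and so equality holds.

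For the converse, I will assume the union over the component equals $V(G)$ and build a single walk in $\mathcal C_k(S)$ that starts at $S$ and visits every vertex of the component. The component is finite and connected, so I list its vertices as $X_1,\ldots,X_m$. For each $j$ there is a path in $\mathcal C_k(S)$ from $S$ to $X_j$. I concatenate, for each $j$, the path from $S$ to $X_j$ followed by its reversal back to $S$; the reversal is legal by symmetry of adjacency. Every consecutive pair in the resulting sequence consists of adjacent configurations of $\M_k(G)$, so it is a legal mobile move. The sequence starts at $S$, and the union of its terms contains every $X_j$, hence equals $V(G)$. Alternatively, a closed spanning walk, such as a DFS traversal of a spanning tree of the component, gives the same conclusion directly.

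No step is a genuine obstacle. The only point needing care is checking that an edge of the layer $\R^k(G)$ is precisely a legal mobile move. This holds because both endpoints lie in $\M_k(G)$, so the move is admissible, and within a layer only TS moves occur. The degenerate cases fit the same argument. If $k=0$, the component is $\{\emptyset\}$, and both conditions hold exactly when $V(G)=\emptyset$. If the component is a single vertex, the trivial sequence $r=0$ suffices.
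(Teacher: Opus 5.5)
Your proof is correct and follows the paper's argument. The forward direction reads the witnessing sequence as a walk in $\mathcal C_k(S)$, and the converse uses a walk from $S$ through every vertex of the finite connected component, which you make explicit by concatenating out-and-back paths; when concatenating, merge the repeated copies of $S$ at the junctions so that consecutive terms genuinely differ by a legal move.
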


\begin{proof}
A walk in $\mathcal C_k(S)$ corresponds to a sequence of configurations in $\M_k(G)$ in which consecutive configurations differ by a legal mobile move.

Suppose that $S$ is a mobile mutual-visibility set.
Let $S_0,S_1,\ldots,S_r$ be a witnessing sequence, so $S_0=S$ and $\bigcup_{i=0}^r S_i=V(G)$.
It determines a walk starting at $S$.
Hence every $S_i$ belongs to $\mathcal C_k(S)$, and
\begin{displaymath}
    V(G) = \bigcup_{i=0}^r S_i \subseteq \bigcup_{X \in V(\mathcal C_k(S))} X \subseteq V(G).
\end{displaymath}
This proves the required covering condition.

Conversely, assume the covering condition.
Since the component is finite and connected, there is a walk $(S_0,S_1,\ldots,S_r)$ starting at $S$ and visiting every vertex of $\mathcal C_k(S)$.
Each step of this walk corresponds to a legal mobile move, and
\begin{displaymath}
    \bigcup_{i=0}^r S_i = \bigcup_{X \in V(\mathcal C_k(S))} X = V(G).
\end{displaymath}
Thus, $S$ is a mobile mutual-visibility set.
\end{proof}

\begin{corollary}
\label{cor:connected-layer-mobile}
Let $G$ be a graph and let $0 \le k \le \mu(G)$.
If $\R^k(G)$ is connected, then every member of $\M_k(G)$ is mobile if and only if every vertex of $G$ belongs to at least one member of $\M_k(G)$.
\end{corollary}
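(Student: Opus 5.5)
The plan is to derive the corollary directly from Proposition~\ref{prop:mobile-component}, using the observation that when $\R^k(G)$ is connected, every configuration in $\M_k(G)$ has the same component. Concretely, fix any $S\in\M_k(G)$. Since $\R^k(G)$ is connected, $\mathcal C_k(S)=\R^k(G)$, so $V(\mathcal C_k(S))=\M_k(G)$. Therefore the covering condition in Proposition~\ref{prop:mobile-component} reads
\begin{displaymath}
    \bigcup_{X\in\M_k(G)} X = V(G),
\end{displaymath}
which does not depend on $S$. This set-theoretic identity is exactly the statement that every vertex of $G$ lies in some member of $\M_k(G)$.

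Both directions then follow at once. Suppose every vertex lies in some member of $\M_k(G)$. Then the displayed identity holds, and Proposition~\ref{prop:mobile-component} gives that each $S\in\M_k(G)$ is mobile. Conversely, suppose every member of $\M_k(G)$ is mobile. Since $0\le k\le\mu(G)$, Proposition~\ref{prop:cited-heredity} guarantees $\M_k(G)\neq\emptyset$, so we may pick some $S$. Applying Proposition~\ref{prop:mobile-component} to this $S$ yields the covering identity, hence every vertex lies in some member of $\M_k(G)$.

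The only point requiring care is this nonemptiness of $\M_k(G)$ in the converse direction. If $\M_k(G)$ were empty, "every member is mobile" would hold vacuously while the covering condition could fail. Heredity rules this out under the stated range $0\le k\le\mu(G)$. No step here is a genuine obstacle; the argument is a direct specialization of the component criterion.
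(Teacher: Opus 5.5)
Your proof is correct and is exactly the intended argument. Connectivity of $\R^k(G)$ forces $\mathcal C_k(S)=\R^k(G)$ for every $S$, so the covering condition of Proposition~\ref{prop:mobile-component} becomes $\bigcup_{X\in\M_k(G)}X=V(G)$, independent of $S$. Your use of heredity to get $\M_k(G)\neq\emptyset$ for the converse is a detail the paper leaves implicit, since it states the corollary without a separate proof.
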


Proposition~\ref{prop:mobile-component} connects mobility, which asks whether a component covers the host, with reachability between configurations.

In a connected graph $G$, every vertex subset of cardinality at most two is a mutual-visibility set.
It follows that $\lvert \M_k(G) \rvert = \binom{\lvert V(G) \rvert}{k}$ for $k \in \{0,1,2\}$.
Therefore, $\R^0(G) \cong K_1$, $\R^1(G) \cong G$, and $\R^2(G) = \T_2(G)$.
In particular, these layers are connected.
If $G$ has at least two vertices, the configurations in $\R^2(G)$ also cover $V(G)$, and Corollary~\ref{cor:connected-layer-mobile} gives $2 \le \Mob(G) \le \mu(G)$.
Thus only layers of cardinality at least three can be disconnected.
We first study the full reconfiguration graph, then layers for specific families.

\section{Structural properties of the full reconfiguration graph}
\label{sec:full-structural}

The full graph combines TAR moves between layers with TS moves within them.
We show that its TAR skeleton is a daisy cube and derive results on distances, enumeration, and vertex connectivity, and examine complete and disconnected hosts and isometric and convex subgraphs.

\subsection{The TAR skeleton and metric bounds}

Let $G$ be a graph of order $n$.
The \emph{TAR skeleton} of $\R(G)$, denoted by $\mathcal{R}_{\mathrm{TAR}}(G)$, is the spanning subgraph obtained by deleting all TS edges.
Fix an ordering $v_1,\ldots,v_n$ of $V(G)$ and identify each subset $S\subseteq V(G)$ with its incidence vector $\chi_S \in \{0,1\}^n$, where $(\chi_S)_i = 1$ if $v_i \in S$ and $(\chi_S)_i = 0$ otherwise.
Thus subsets of $V(G)$ are identified with vertices of $\Q_n$.

For $X \subseteq \{0,1\}^n$, let $\Q_n(X)$ denote the subgraph of $\Q_n$ induced by the vectors $y = (y_1,\ldots,y_n) \in \{0,1\}^n$ for which there exists $x = (x_1,\ldots,x_n) \in X$ satisfying $y_i \le x_i$ for every $i \in \{1,\ldots,n\}$.
A graph of this form is called a \emph{daisy cube}.
A subgraph $H$ of $G$ is \emph{isometric} if $d_H(u,v)=d_G(u,v)$ for all $u,v\in V(H)$.
A \emph{partial cube} is a graph isomorphic to an isometric subgraph of a hypercube.

\begin{proposition}[\citealp{KlavzarMollard2019}]
\label{prop:cited-daisy-partial-cube}
If $X \subseteq \{0,1\}^n$, then $\Q_n(X)$ is a partial cube.
\end{proposition}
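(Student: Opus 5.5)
The plan is to show that the subgraph $\Q_n(X)$ is isometric in $\Q_n$, so that it is a partial cube by definition. Since the Hamming distance $d_{\Q_n}(x,y)$ is the number of coordinates in which $x$ and $y$ differ, it suffices to prove two things. First, that $\Q_n(X)$ is connected. Second, that for any two vertices $y,z$ of $\Q_n(X)$ there is a path inside $\Q_n(X)$ of length exactly $\lvert y \triangle z\rvert$, identifying vectors with their supports. The inequality $d_{\Q_n(X)}(y,z)\ge d_{\Q_n}(y,z)$ is automatic for a subgraph, so only the upper bound needs work.

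The key observation is that the vertex set of $\Q_n(X)$ is a down-set: if $y$ lies in it and $y'\le y$ coordinatewise, then $y'\le y\le x$ for the witness $x\in X$, so $y'$ lies in it as well. (If $X=\emptyset$ the graph is empty and the claim is trivial. Otherwise the zero vector belongs to the graph.)

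Given $y,z$ in $\Q_n(X)$, write $A=\operatorname{supp}(y)\setminus\operatorname{supp}(z)$ and $B=\operatorname{supp}(z)\setminus\operatorname{supp}(y)$, so that $\lvert y\triangle z\rvert=\lvert A\rvert+\lvert B\rvert$. Build the path in two stages:
\begin{itemize}
\item Starting from $y$, switch off the coordinates of $A$ one at a time. Each intermediate vector lies below $y$, hence stays in the down-set, and the stage ends at $y\wedge z$.
\item From $y\wedge z$, switch on the coordinates of $B$ one at a time until reaching $z$. Each intermediate vector lies below $z$, hence is again a vertex of $\Q_n(X)$.
\end{itemize}
Consecutive vectors differ in exactly one coordinate, so this is a path in the induced subgraph of length $\lvert A\rvert+\lvert B\rvert=d_{\Q_n}(y,z)$. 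This also gives connectivity.

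Therefore $d_{\Q_n(X)}=d_{\Q_n}$ on $\Q_n(X)$, so $\Q_n(X)$ is an isometric subgraph of $\Q_n$ and hence a partial cube. There is no real obstacle here. The only point requiring care is that the intermediate vectors stay inside the down-set, and this is exactly why the path must first descend to the meet $y\wedge z$ and only then ascend, rather than interleaving the two stages.
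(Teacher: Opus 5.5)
Your proof is correct and takes essentially the same approach as the paper. The paper cites this proposition from Klavžar and Mollard without proving it, but in the proof of Theorem~\ref{thm:tar-isometric} it shows $d_{\mathcal{R}_{\mathrm{TAR}}(G)}(S,T)=\lvert S\triangle T\rvert$ by the same path: remove the elements of $S\setminus T$, then add those of $T\setminus S$, relying only on downward closure; you simply run that argument for the down-set generated by an arbitrary $X$.
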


\begin{theorem}[The TAR skeleton]
\label{thm:tar-isometric}
Let $G$ be a graph of order $n$, and let $\widehat{\M}(G)$ be the family of inclusion-maximal mutual-visibility sets of $G$.
Under the incidence-vector identification, $\mathcal{R}_{\mathrm{TAR}}(G) = \Q_n \left(\widehat{\M}(G)\right)$.
In particular, $\mathcal{R}_{\mathrm{TAR}}(G)$ is a daisy cube.
Moreover, for all $S,T \in \M(G)$, we have $d_{\mathcal{R}_{\mathrm{TAR}}(G)} (S,T) = \lvert S \triangle T \rvert$.
Consequently, $\mathcal{R}_{\mathrm{TAR}}(G)$ is an isometric subgraph of $\Q_n$ and, in particular, a partial cube.
\end{theorem}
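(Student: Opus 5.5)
The proof splits into three parts: identify the vertex set and edge set, prove the distance formula, and then read off isometry.

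First, the vertex set. Let $X=\{\chi_S : S\in\widehat{\M}(G)\}$. A vector $y$ lies in $\Q_n(X)$ exactly when its support is contained in some inclusion-maximal mutual-visibility set.
\begin{itemize}
\item[(i)] Suppose $S\in\M(G)$. Since $V(G)$ is finite, $S$ is contained in an inclusion-maximal member of $\M(G)$, so $\chi_S\in V(\Q_n(X))$.
\item[(ii)] Conversely, if $y\le\chi_T$ coordinatewise with $T\in\widehat{\M}(G)$, then the support of $y$ is a subset of $T$. By Proposition~\ref{prop:cited-heredity} it belongs to $\M(G)$.
\end{itemize}
Thus the vertex sets coincide.

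Next, the edge set. Both graphs are induced on this common vertex set. By definition, the TAR edges join $S,T$ exactly when $\lvert S\triangle T\rvert=1$, which is exactly adjacency in $\Q_n$. Hence $\mathcal{R}_{\mathrm{TAR}}(G)=\Q_n(X)$, and in particular it is a daisy cube.

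For the distance formula, the lower bound $d_{\mathcal{R}_{\mathrm{TAR}}(G)}(S,T)\ge\lvert S\triangle T\rvert$ is immediate. Each TAR edge changes the symmetric difference with $T$ by exactly one element, so any path from $S$ to $T$ needs at least $\lvert S\triangle T\rvert$ steps.

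For the upper bound, we route through the intersection. Remove the elements of $S\setminus T$ one at a time, which reaches $S\cap T$, and then add the elements of $T\setminus S$ one at a time, which reaches $T$. Every intermediate set is contained in $S$ or in $T$, so it is a mutual-visibility set by Proposition~\ref{prop:cited-heredity}. Consecutive sets differ in one element, so this is a path in $\mathcal{R}_{\mathrm{TAR}}(G)$ of length $\lvert S\setminus T\rvert+\lvert T\setminus S\rvert=\lvert S\triangle T\rvert$.

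Finally, the hypercube distance between $\chi_S$ and $\chi_T$ is the Hamming distance $\lvert S\triangle T\rvert$. The equality just proved therefore says that the embedding of $\mathcal{R}_{\mathrm{TAR}}(G)$ into $\Q_n$ is isometric. Hence $\mathcal{R}_{\mathrm{TAR}}(G)$ is a partial cube; this also agrees with Proposition~\ref{prop:cited-daisy-partial-cube}.

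No step is a real obstacle. The only points needing care are using heredity in both directions in the vertex identification, and keeping the monotone path inside $\M(G)$, which routing through $S\cap T$ guarantees.
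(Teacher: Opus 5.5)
Your proof is correct and follows essentially the same route as the paper. Heredity plus finiteness identifies $\M(G)$ with the downset generated by $\widehat{\M}(G)$, and TAR adjacency coincides with hypercube adjacency. Removing $S\setminus T$ and then adding $T\setminus S$ gives a path of length $\lvert S\triangle T\rvert$, which matches the Hamming lower bound; the only cosmetic difference is that you read off the partial-cube property from isometry rather than citing Proposition~\ref{prop:cited-daisy-partial-cube}.
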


\begin{proof}
Heredity makes $\M(G)$ downward closed, and each of its members lies in an inclusion-maximal member.
Thus its incidence vectors form the downset generated by $\widehat{\M}(G)$; TAR adjacency agrees with hypercube adjacency.
This proves the daisy-cube description, and Proposition~\ref{prop:cited-daisy-partial-cube} gives the partial-cube assertion.

For $S,T\in\M(G)$, remove the vertices of $S\setminus T$ one at a time and then add those of $T\setminus S$.
Every intermediate set is a subset of $S$ or $T$, so the resulting TAR path has length $\lvert S \triangle T \rvert$.
Any hypercube path must change at least these $\lvert S \triangle T \rvert$ coordinates.
This proves the distance formula and isometry.
\end{proof}

The TS edges may shorten distances relative to the TAR skeleton.
The next bounds are expressed in terms of $\lvert S\setminus T\rvert$ and $\lvert T\setminus S\rvert$.

\begin{proposition}[Metric bounds]
\label{prop:full-metric-bounds}
Let $G$ be a graph of order $n$.
For all $S,T \in \M(G)$, the inequality $\max \left\{\lvert S \setminus T \rvert,\lvert T \setminus S \rvert\right\} \le d_{\R(G)} (S,T) \le \lvert S \triangle T \rvert$ holds.
If $S \subseteq T$, then $d_{\R(G)} (S,T) = \lvert T \setminus S \rvert$.
In particular, $d_{\R(G)}(\emptyset,S)=\lvert S\rvert$ and $\operatorname{ecc}_{\R(G)}(\emptyset)=\mu(G)$, where $\operatorname{ecc}$ denotes eccentricity.
Consequently, $\mu(G) \le \diam(\R(G)) \le \min \{n,2\mu(G)\}$.
\end{proposition}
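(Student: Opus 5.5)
The upper bound is immediate from Theorem~\ref{thm:tar-isometric}. Since $\mathcal{R}_{\mathrm{TAR}}(G)$ is a spanning subgraph of $\R(G)$, we get $d_{\R(G)}(S,T)\le d_{\mathcal{R}_{\mathrm{TAR}}(G)}(S,T)=\lvert S\triangle T\rvert$.

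For the lower bound I will use a potential argument that tracks how a single move changes $\lvert S\setminus T\rvert$. Fix $T$ and, for a configuration $X$, set $a(X)=\lvert X\setminus T\rvert$ and $b(X)=\lvert T\setminus X\rvert$. I claim that each edge of $\R(G)$ changes each of $a$ and $b$ by at most $1$.
\begin{itemize}
\item A TAR move changes $X$ in one element, so it changes exactly one of $a,b$ by at most one.
\item A TS move replaces $x$ by $y$. Removing $x$ changes $a$ or $b$ by one, and adding $y$ does the same. Hence $a$ changes by at most $1$ (only through $x$ if $x\notin T$, and only through $y$ if $y\notin T$, in opposite directions), and likewise $b$ changes by at most $1$.
\end{itemize}
Along a path from $S$ to $T$, the quantity $a$ goes from $\lvert S\setminus T\rvert$ to $0$ and $b$ goes from $\lvert T\setminus S\rvert$ to $0$. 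So the path has length at least $\max\{\lvert S\setminus T\rvert,\lvert T\setminus S\rvert\}$.

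If $S\subseteq T$, then $S\setminus T=\emptyset$, so the two bounds coincide at $\lvert T\setminus S\rvert=\lvert S\triangle T\rvert$. Taking $S=\emptyset$ gives $d_{\R(G)}(\emptyset,S)=\lvert S\rvert$. The maximum of this over $\M(G)$ is $\mu(G)$, which gives $\operatorname{ecc}(\emptyset)=\mu(G)$ and hence $\diam(\R(G))\ge\mu(G)$.

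For the upper diameter bound, $\lvert S\triangle T\rvert\le\lvert S\rvert+\lvert T\rvert\le 2\mu(G)$, and $\lvert S\triangle T\rvert\le n$ since $S\triangle T\subseteq V(G)$. The only step needing care is the TS case of the potential argument, where one must check that the removal of $x$ and the addition of $y$ never both decrease the same quantity. This holds because both of them decreasing $a$ would require $x\notin T$ and $y\in T$; that decreases $a$ via $x$ but decreases $b$ via $y$, not $a$ twice.
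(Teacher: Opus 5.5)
Your proof is correct and follows the paper's argument. The upper bound comes from the TAR path of Theorem~\ref{thm:tar-isometric}, and the lower bound from the fact that each move changes $\lvert X\setminus T\rvert$ and $\lvert T\setminus X\rvert$ by at most one; your potential functions are a precise form of the paper's ``each move removes at most one vertex of $S\setminus T$ and inserts at most one vertex of $T\setminus S$''. The remaining claims follow exactly as in the paper. Your closing ``care'' remark is muddled, since adding $y$ can never decrease $a$ at all, but your bullet-point computation already settles the TS case.
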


\begin{proof}
Each move removes at most one vertex of $S\setminus T$ and inserts at most one vertex of $T\setminus S$.
Hence every $ST$-path has length at least $\max\{\lvert S \setminus T \rvert,\lvert T \setminus S \rvert\}$.
The preceding theorem gives a TAR path of length $\lvert S \triangle T\rvert$, proving the upper bound.
If $S\subseteq T$, both bounds equal $\lvert T \setminus S \rvert$.
Taking $S = \emptyset$ gives $\operatorname{ecc}_{\R(G)}(\emptyset)=\mu(G)$ and thus $\operatorname{diam}(\R(G))\ge\mu(G)$.
Finally, $\lvert S \triangle T \rvert\le\min\{n,\lvert S \rvert + \lvert T \rvert \}\le\min\{n,2\mu(G)\}$.
\end{proof}

Every TS edge can be replaced by a two-edge path in the TAR skeleton.
Indeed, if $S$ and $T$ are TS-adjacent, then $S \cap T \in \M(G)$, and both $S$ and $T$ are TAR-adjacent to $S \cap T$.
Hence these three configurations form a $3$-cycle in $\R(G)$.

\subsection{Counting configurations, TAR edges, and local structure}

The visibility polynomial of $G$ was introduced by \citet{BujtasKlavzarTian2025}.
In our notation, it is given by $P_G(z) = \sum_{k=0}^{\mu(G)} m_k z^k$, where $m_k = \lvert \M_k(G) \rvert$.

For a daisy cube $H=\Q_d(X)$, its \emph{cube polynomial} is $C_H(z)=\sum_{j\ge 0} c_j(H)z^j$, where $c_j(H)$ is the number of induced subgraphs of $H$ isomorphic to $\Q_j$. Writing $0^d$ for the all-zero vector, its weight enumerator with respect to $0^d$ is $W_{H,0^d}(z)=\sum_{v\in V(H)} z^{d_H(0^d,v)}$.
We recall the cube-polynomial identity from the following result.

\begin{corollary}[\citealp{KlavzarMollard2019}]
\label{cor:cited-daisy-polynomial}
If $H=\Q_d(X)$ is a daisy cube, then $C_H(z)=W_{H,0^d}(z+1)$.
\end{corollary}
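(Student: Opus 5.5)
The statement is the identity $C_H(z)=W_{H,0^d}(z+1)$ for $H=\Q_d(X)$. I would prove it by building a bijection between induced subcubes of $H$ and pairs $(v,A)$ with $v\in V(H)$ and $A\subseteq \operatorname{supp}(v)$, where $\operatorname{supp}(v)=\{i: v_i=1\}$. Such a pair corresponds to the subcube spanned by $v$ in the directions of $A$. Once this bijection is in place, counting gives
\begin{displaymath}
C_H(z)=\sum_{v\in V(H)}\sum_{A\subseteq \operatorname{supp}(v)} z^{\lvert A\rvert}=\sum_{v\in V(H)}(1+z)^{\lvert\operatorname{supp}(v)\rvert}.
\end{displaymath}
The identity then follows from $d_H(0^d,v)=\lvert\operatorname{supp}(v)\rvert$.

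The metric fact holds because $V(H)$ is a downset containing $0^d$. Removing the ones of $v$ one at a time gives a path inside $H$ of length equal to the Hamming weight, and no path can be shorter since each edge changes one coordinate. This is the same argument as in Theorem~\ref{thm:tar-isometric}.

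For the bijection, I first recall that every induced subgraph of $\Q_d$ isomorphic to $\Q_j$ is a subcube. That is, it is the set of vectors agreeing with a fixed vector outside a set $A$ of $j$ coordinates and arbitrary on $A$. This standard fact about hypercubes I would either cite or justify briefly: an induced $\Q_j$ in $\Q_d$ is convex, and convex subgraphs of hypercubes are subcubes. So an induced $\Q_j$ of $H$ is a subcube of $\Q_d$ lying entirely in $V(H)$. Its top element $v$ (ones on $A$) lies in $H$, and $A\subseteq\operatorname{supp}(v)$.

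Conversely, take $v\in V(H)$ and $A\subseteq\operatorname{supp}(v)$. Every vector obtained from $v$ by changing coordinates in $A$ is $\le v$ componentwise, hence lies in the downset $V(H)$. Since $H$ is induced in $\Q_d$, these vectors span an induced $\Q_{\lvert A\rvert}$ of $H$. The two maps are mutually inverse, because a subcube is determined by its top element and its set of free directions.

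The main obstacle is the claim that induced copies of $\Q_j$ in $\Q_d$ are exactly subcubes. This is needed so that $c_j(H)$ counts subcubes rather than arbitrary isomorphic copies. I would rely on the known convexity result, or on the interpretation implicit in \citet{KlavzarMollard2019} that cube polynomials count subcubes.
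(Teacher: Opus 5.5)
The paper gives no proof of this statement. It is imported from \citet{KlavzarMollard2019}, and it is the case $x=0$ of their identity $D_{H,0^d}(x,y)=C_H(x+y-1)$ for the distance cube polynomial, which counts induced cubes of $H$ by dimension (variable $x$) and by distance from $0^d$ (variable $y$). Your top-vertex bijection is a correct, self-contained direct proof. Induced copies of $\Q_j$ in $H$ correspond to pairs $(v,A)$ with $v\in V(H)$, $A\subseteq\operatorname{supp}(v)$ and $\lvert A\rvert=j$; the downset property is used exactly where it is needed, and $d_H(0^d,v)=\lvert\operatorname{supp}(v)\rvert$ finishes the computation. Your route is more economical and loses nothing relative to the source. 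The cube indexed by $(v,A)$ lies at distance $\lvert\operatorname{supp}(v)\rvert-\lvert A\rvert$ from $0^d$, the weight of its bottom vertex. Recording that exponent in the same double sum gives $\sum_{v\in V(H)}(x+y)^{\lvert\operatorname{supp}(v)\rvert}=W_{H,0^d}(x+y)$, and together with your identity this is the bivariate statement.

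One sentence should be tightened. Saying that an induced $\Q_j$ in $\Q_d$ ``is convex'' does not justify the subcube claim, because convexity is essentially what has to be shown. Here is a short direct argument.
\begin{itemize}
\item Let $\phi$ be an isomorphism from $\Q_j$ onto the copy, write $e_i$ for unit vectors, and pad vectors of $\{0,1\}^j$ with zeros.
\item Apply an automorphism of $\Q_d$, namely a translation followed by a permutation of coordinates; both map subcubes to subcubes. You may then assume $\phi(0^j)=0^d$ and $\phi(e_i)=e_i$ for $i\le j$.
\item For $w\in\{0,1\}^j$ of weight at least two, choose $i\ne i'$ in its support. By induction on weight, $\phi(w-e_i)=w-e_i$ and $\phi(w-e_{i'})=w-e_{i'}$.
\item These two vertices are at distance two in $\Q_d$, so their only common neighbors are $w$ and $w-e_i-e_{i'}=\phi(w-e_i-e_{i'})$. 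Injectivity forces $\phi(w)=w$.
\end{itemize}
Thus every copy of $\Q_j$ in $\Q_d$, induced or not, is a subcube, and with this lemma your proof is complete.
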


By Theorem~\ref{thm:tar-isometric}, $W_{\mathcal R_{\mathrm{TAR}}(G),\emptyset}(z)=P_G(z)$, so Corollary~\ref{cor:cited-daisy-polynomial} gives $C_{\mathcal R_{\mathrm{TAR}}(G)}(z)=P_G(z+1)$.
Since $c_0(H)=\lvert V(H)\rvert$ and $c_1(H)=\lvert E(H)\rvert$, the constant and linear coefficients give the two identities below.
We also give a direct counting proof.

\begin{proposition}
\label{prop:full-counting}
The order and number of TAR edges are $\lvert V(\R(G)) \rvert = \sum_{k=0}^{\mu(G)} m_k = P_G(1)$ and $\lvert E(\mathcal R_{\mathrm{TAR}}(G)) \rvert = \sum_{k=1}^{\mu(G)} k m_k = P'_G(1)$.
\end{proposition}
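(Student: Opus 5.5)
The plan is a direct double count, so Corollary~\ref{cor:cited-daisy-polynomial} is not needed. For the order, the vertex set of $\R(G)$ is $\M(G)$ by definition. By heredity (Proposition~\ref{prop:cited-heredity}) and the definition of $\mu(G)$, the family $\M(G)$ is the disjoint union of the classes $\M_k(G)$ for $0\le k\le\mu(G)$. Hence $\lvert V(\R(G))\rvert=\sum_{k=0}^{\mu(G)} m_k$, and evaluating $P_G(z)=\sum_k m_k z^k$ at $z=1$ gives $P_G(1)$.

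For the TAR edges, I would orient each edge of $\mathcal R_{\mathrm{TAR}}(G)$ toward its larger endpoint. A TAR edge joins $S$ and $T$ with $\lvert S\triangle T\rvert=1$, so one endpoint, say $T$, has $T=S\cup\{x\}$ with $x\notin S$. Thus each TAR edge corresponds to exactly one pair $(T,x)$ with $T\in\M(G)$, $x\in T$, and $S=T\setminus\{x\}$.

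The step that needs care is showing this correspondence is a bijection onto all pairs $(T,x)$ with $x\in T\in\M(G)$. Injectivity is clear, since the edge is recovered as $\{T\setminus\{x\},T\}$. For surjectivity, given any such pair, $T\setminus\{x\}\in\M(G)$ by Proposition~\ref{prop:cited-heredity}, so $\{T\setminus\{x\},T\}$ is a TAR edge of $\R(G)$. This is the only point where heredity is used, and it ensures no edge is missing.

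It follows that $\lvert E(\mathcal R_{\mathrm{TAR}}(G))\rvert=\sum_{T\in\M(G)}\lvert T\rvert=\sum_{k=1}^{\mu(G)} k\,m_k$, where the $k=0$ term vanishes. This equals $P_G'(1)$ by differentiating $P_G$ term by term. As a consistency check, the same formulas arise as $c_0$ and $c_1$ of $P_G(z+1)$ via Corollary~\ref{cor:cited-daisy-polynomial}.
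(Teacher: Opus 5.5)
Your proposal is correct and follows essentially the same route as the paper: you partition $\M(G)$ by cardinality to get the order, and you count each TAR edge once from its larger endpoint, with heredity guaranteeing that all $\lvert T\rvert$ single-token removals from $T$ are feasible. One small point: the partition step needs only the definition of $\mu(G)$, not heredity, but citing it there does no harm.
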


\begin{proof}
Every configuration in $\M(G)$ belongs to exactly one
of the families $\M_k(G)$, according to its cardinality.
Therefore, $\lvert V(\R(G)) \rvert = \sum_{k=0}^{\mu(G)} \lvert \M_k(G) \rvert = P_G(1)$.
Each configuration in $\M_k(G)$ has $k$ TAR neighbors in $\M_{k-1}(G)$, one per token removal.
Counting from the larger endpoint counts each TAR edge once.
Hence, $\lvert E(\mathcal R_{\mathrm{TAR}}(G)) \rvert = \sum_{k=1}^{\mu(G)} k \lvert \M_k(G) \rvert = P'_G(1)$.
\end{proof}

For a set $S \in \M(G)$, the neighbors of $S$ fall into three disjoint classes: token removals, token additions, and token slides.
Heredity makes each of the $\lvert S \rvert$ possible removals admissible.
To count the remaining two types of neighbors, define $s_G(S) = \left\lvert \{(x,y) \in S \times (V(G) \setminus S) \colon xy \in E(G), (S \setminus \{x\}) \cup \{y\} \in \M(G)\} \right\rvert$ and $a_G(S) = \left\lvert \{v \in V(G) \setminus S \colon S \cup \{v\} \in \M(G) \} \right\rvert$.
These count admissible slides and additions, respectively.
Therefore, $\deg_{\R(G)} (S) = \lvert S \rvert + a_G(S) + s_G(S)$.
In particular, $\deg_{\R(G)}(\emptyset)=n$.

\subsection{Vertex connectivity}

\begin{theorem}[Vertex connectivity]
\label{thm:full-vertex-connectivity}
Let $G$ be a connected graph of order $n\ge3$.
Then the reconfiguration graph satisfies $3\le\kappa(\R(G))\le n$.
Both bounds are sharp.
\end{theorem}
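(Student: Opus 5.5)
The upper bound comes from a vertex of degree $n$, the lower bound from building $\R(G)$ layer by layer with the expansion lemma, and sharpness from paths and complete graphs.

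\textbf{Upper bound.} I would use the vertex $\emptyset$, whose degree is $n$ by the degree formula $\deg_{\R(G)}(S)=\lvert S\rvert+a_G(S)+s_G(S)$. The graph $\R(G)$ is not complete, since $\emptyset$ is not adjacent to any configuration in $\M_2(G)$, and $\M_2(G)\neq\emptyset$ because $n\ge 3$. Deleting the $n$ singletons, which form $N(\emptyset)$, therefore separates $\emptyset$ from $\M_2(G)$. Hence $\kappa(\R(G))\le n$.

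\textbf{Lower bound.} I would order the configurations by cardinality and repeatedly apply the expansion lemma: if $H$ is $3$-connected and a new vertex is joined to at least three vertices of $H$, the result is $3$-connected.

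Let $L$ be the subgraph of $\R(G)$ induced by the layers $\R^0(G)$, $\R^1(G)$ and $\R^2(G)$.
\begin{itemize}
\item Every $S\in\M_k(G)$ with $k\ge 3$ has $k\ge 3$ TAR neighbours in $\M_{k-1}(G)$, all admissible by Proposition~\ref{prop:cited-heredity}.
\item So if $L$ is $3$-connected, adding the configurations of each layer $k=3,\dots,\mu(G)$ in turn preserves $3$-connectivity.
\item All remaining edges, namely TS edges and TAR edges going upward, only add edges among vertices already present, which cannot decrease connectivity.
\end{itemize}

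The main obstacle is proving that $L$ is $3$-connected. Recall that $\R^1(G)\cong G$ and $\R^2(G)=\T_2(G)$.
\begin{itemize}
\item The subgraph on $\{\emptyset\}\cup\M_1(G)$ is the join of $K_1$ with the connected graph $G$, hence $2$-connected.
\item Each pair $\{u,v\}$ is adjacent to $\{u\}$ and $\{v\}$. It also has at least one TS neighbour, because $G$ is connected with $n\ge 3$, so some vertex $w\notin\{u,v\}$ is adjacent to $u$ or $v$.
\item By Theorem~\ref{thm:cited-token-connectivity}, the layer $\T_2(G)$ is itself connected.
\end{itemize}

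To verify $3$-connectivity, I would delete two arbitrary vertices $A$ and $B$ of $L$ and show that every survivor reaches a surviving singleton or $\emptyset$. I would split into cases according to how many of $A,B$ lie in $\{\emptyset\}\cup\M_1(G)$.
\begin{itemize}
\item If neither does, each pair keeps both singleton neighbours, and the bottom part is intact and connected.
\item If exactly one does, the bottom part stays connected by $2$-connectivity. Every pair still retains a surviving singleton neighbour.
\item If both do, a pair $\{u,v\}$ may lose both singletons; this is the case where $A=\{u\}$ and $B=\{v\}$. Then I would route it through a TS neighbour $\{u,w\}$ or $\{w,v\}$, which keeps the surviving singleton $\{w\}$. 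If $\emptyset$ is among the deleted vertices, I would instead use connectivity of $G$ minus one vertex along a spanning-tree leaf, together with connectivity of $\T_2(G)$, to reconnect the singletons. This last subcase is the delicate one.
\end{itemize}

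\textbf{Sharpness.}
\begin{itemize}
\item For the value $n$, take $G=K_n$. Every subset of $V(K_n)$ is a mutual-visibility set, so $\R(K_n)$ contains all of $\Q_n$ as a spanning subgraph. Since $\Q_n$ is $n$-connected, $\kappa(\R(K_n))=n$.
\item For the value $3$, take a path $P_n$ with $n\ge 4$, which has $\mu(P_n)=2$. Consider $S=\{v_1,v_2\}$: it has two removals and no admissible additions. Its only admissible slide is $v_2\to v_3$, since $v_1$ has no unoccupied neighbour. So $\deg(S)=3$, which forces $\kappa=3$.
\end{itemize}
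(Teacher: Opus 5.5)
Your proposal is correct and follows essentially the paper's route: you show the part of cardinality at most two survives any two deletions, use the at least three downward TAR neighbours of larger configurations (packaged as the expansion lemma rather than the paper's explicit descending TAR path), and take $P_n$ and $K_n$ for sharpness. In the subcase where $\emptyset$ and $\{x\}$ are deleted, drop the spanning-tree-leaf idea, since $x$ is chosen adversarially and may be a cut vertex of $G$: all pairs survive, $\T_2(G)$ is connected by Theorem~\ref{thm:cited-token-connectivity}, and each surviving $\{y\}$ is TAR-adjacent to a pair $\{y,z\}$, which already finishes the case (the paper instead uses the subdivided $K_n$ formed by the TAR edges between $\M_1(G)$ and $\M_2(G)$).
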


\begin{proof}
The empty configuration has precisely $n$ neighbors, namely the singleton configurations.
Deleting these neighbors isolates $\emptyset$, and therefore $\kappa(\R(G)) \le n$.

To prove the lower bound, let $F \subseteq V(\R(G))$ with $\lvert F \rvert \le 2$.
We show that $\R(G)-F$ is connected.
Let $H$ be the subgraph of $\R(G)-F$ induced by the remaining configurations of cardinality at most two.
We first prove that $H$ is connected and then show that every remaining larger configuration has a path to $H$.

Suppose first that $\emptyset \notin F$.
Every remaining singleton configuration is adjacent to $\emptyset$.
A remaining configuration $\{u,v\}$ of cardinality two is adjacent to one of these singletons unless both $\{u\}$ and $\{v\}$ belong to $F$.
If both belong to $F$, then $F = \{\{u\},\{v\}\}$.
Since $G$ is connected and $n \ge 3$, there exist $x \in \{u,v\}$ and $w \notin \{u,v\}$ such that $xw \in E(G)$.
Sliding the token from $x$ to $w$ yields the configuration $U = (\{u,v\} \setminus \{x\}) \cup \{w\}$.
Neither $U$ nor $\{w\}$ belongs to $F$.
Thus, $\{u,v\}$ is joined to $\emptyset$ by the path $\{u,v\},U,\{w\},\emptyset$, which lies entirely in $H$.
Thus every vertex reaches $\emptyset$, so $H$ is connected.

Suppose now that $\emptyset\in F$.
Before deleting the configurations in $F$, the TAR edges between $\M_1(G)$ and $\M_2(G)$ form the graph obtained by subdividing every edge of $K_n$ once.
Singletons correspond to original vertices, and $\{u,v\}$ subdivides the edge joining $\{u\}$ and $\{v\}$.

Since $\emptyset \in F$ and $\lvert F \rvert \le 2$, at most one vertex of this subdivision graph is deleted.
If none is deleted, the graph remains connected.
Deleting a configuration of cardinality two leaves a subdivision of $K_n$ with one edge removed, which is connected because $n \ge 3$.
Deleting a singleton configuration leaves a subdivision of $K_{n-1}$ together with pendant vertices, each attached to a remaining singleton configuration.
This graph is also connected.
In each case, the remaining TAR edges connect $H$.

Now let $S \in V(\R(G)) \setminus F$ with $\lvert S \rvert \ge 3$.
Every configuration $X$ of cardinality at least three has at least three distinct TAR neighbors of cardinality $\lvert X \rvert - 1$, namely the configurations $X \setminus \{x\}$ with $x \in X$.
Heredity makes these configurations feasible.
Since $\lvert F \rvert \le 2$, at least one of these neighbors lies outside $F$.
Repeating this deletion step produces a TAR path from $S$ to $H$ in $\R(G)-F$.

Finally, let $A$ and $B$ be any two vertices of $\R(G)-F$.
Each has a path to $H$, possibly of length zero.
Connecting these paths through $H$ gives an $AB$-walk in $\R(G)-F$.
Thus, $\R(G)-F$ is connected.
Since $F$ was arbitrary, $\kappa(\R(G))\ge3$.

Both bounds are attained.
For the lower bound, let $G = P_n$ with $n \ge 3$, and let $v_1$ be an end vertex with neighbor $v_2$.
A mutual-visibility set of a path has at most two vertices.
The configuration $\{v_1,v_2\}$ has two TAR neighbors and exactly one TS neighbor, obtained by sliding the token on $v_2$ to $v_3$.
Hence, its degree is three, and the lower bound gives $\kappa(\R(P_n)) = 3$.

For the upper bound, let $G = K_n$.
Every subset of $V(K_n)$ is a mutual-visibility set, so the TAR skeleton of $\R(K_n)$ is $\Q_n$.
The spanning $n$-connected subgraph $\Q_n$ gives $\kappa(\R(K_n))\ge n$.
Therefore $\kappa(\R(K_n)) = n$.
\end{proof}

A natural question is whether $\R(G)$is maximally connected for every
connected $G$, that is, whether $\kappa(\R(G))=\delta(\R(G))$ holds, where $\delta$ denotes minimum degree?
The theorem settles this for its two extremal families.

\subsection{The complete graph}

For complete graphs, every configuration is feasible and the parameters in Proposition~\ref{prop:full-complete-graph} admit closed formulas.
For integers $n$ and $k$ with $0 \le k \le n$, the vertices of the \emph{Johnson graph} $\J(n,k)$ are the $k$-subsets of an $n$-element set.
Adjacency in $\J(n,k)$ means $\lvert S\triangle T\rvert=2$, a single-element replacement.

\begin{proposition}
\label{prop:full-complete-graph}
Let $n\ge3$ and $0\le k\le n$.
The layer $\R^k(K_n)$ is isomorphic to $\J(n,k)$, and $\R(K_n)$ is obtained from $\Q_n$ by adding the edges of $\J(n,k)$ within each level $k$.
Moreover, for configurations $S,T\subseteq V(K_n)$,
\begin{enumerate}
    \item $\lvert V(\R(K_n)) \rvert = \lvert \M(K_n) \rvert = 2^n$.
    \item $\lvert E(\R(K_n)) \rvert = n(n+3)2^{n-3}$.
    \item If $\lvert S \rvert = k$, then $\deg_{\R(K_n)} (S) = n+k(n-k)$.
    \item $d_{\R(K_n)} (S,T) = \max \{\lvert S \setminus T \rvert,\lvert T \setminus S \rvert\}$.
    \item $\diam(\R(K_n)) = n$.
    \item The radius is $\operatorname{rad}(\R(K_n))=\lceil n/2\rceil$.
    \item The center of $\R(K_n)$ consists of the configurations of cardinality $\lfloor n/2 \rfloor$ and $\lceil n/2 \rceil$.
    \item $\kappa(\R(K_n)) = n$.
\end{enumerate}
\end{proposition}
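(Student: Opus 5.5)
Since every subset of $V(K_n)$ is a mutual-visibility set (every two vertices are adjacent, so the geodesic between them has no internal vertices), $\M(K_n)$ is the full power set. By Theorem~\ref{thm:tar-isometric} the TAR skeleton is all of $\Q_n$. In $K_n$ every pair $x\in S$, $y\notin S$ is joined by an edge, so the TS moves within level $k$ are exactly the single-element replacements. Hence $\R^k(K_n)\cong\J(n,k)$, and $\R(K_n)$ is $\Q_n$ plus these Johnson edges. Item~1 is then immediate: there are $2^n$ configurations.

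For item~3, I use the degree formula $\deg(S)=\lvert S\rvert+a_G(S)+s_G(S)$ from the local-structure subsection. Here every addition and every slide is admissible, so $a(S)=n-k$ and $s(S)=k(n-k)$, giving $n+k(n-k)$. Item~2 follows by handshaking. The $\Q_n$ part contributes $n2^{n-1}$ edges. The Johnson part contributes $\tfrac12\sum_k\binom nk k(n-k)=\tfrac12 n(n-1)2^{n-2}$, using $\sum_k\binom nk k(n-k)=n(n-1)2^{n-2}$, which is obtained by counting ordered pairs of distinct elements together with a subset containing the first and avoiding the second. The total is $n2^{n-1}+n(n-1)2^{n-3}=n(n+3)2^{n-3}$.

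For item~4, the lower bound is Proposition~\ref{prop:full-metric-bounds}. For the matching upper bound, let $a=\lvert S\setminus T\rvert\ge b=\lvert T\setminus S\rvert$. Perform $b$ slides, each replacing an element of $S\setminus T$ by one of $T\setminus S$, and then $a-b$ removals. All intermediate sets are feasible, so the path has length $a$.

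Items~5--7 then reduce to maximizing $\max\{\lvert S\setminus T\rvert,\lvert T\setminus S\rvert\}$.
\begin{itemize}
\item For item~5, the distance is at most $n$, and $d(\emptyset,V)=n$, so $\diam(\R(K_n))=n$.
\item For the eccentricity of a set $S$ with $\lvert S\rvert=k$, the value $\lvert S\setminus T\rvert$ is at most $k$ and $\lvert T\setminus S\rvert$ is at most $n-k$. Both are attained, by $T=\emptyset$ and $T=V\setminus S$ respectively. Hence $\operatorname{ecc}(S)=\max\{k,n-k\}$.
\item Minimizing over $k$ gives radius $\lceil n/2\rceil$ (item~6). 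The minimum is attained exactly at $k\in\{\lfloor n/2\rfloor,\lceil n/2\rceil\}$, which is the center (item~7).
\end{itemize}

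For item~8, the bound $\kappa\le n$ comes from isolating $\emptyset$, as in Theorem~\ref{thm:full-vertex-connectivity}. The bound $\kappa\ge n$ comes from the spanning subgraph $\Q_n$, which is $n$-connected. I expect no serious obstacle; the only step requiring care is the binomial identity in item~2.
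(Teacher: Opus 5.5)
Your proposal is correct and follows the paper's proof essentially step for step: $\M(K_n)$ is the full power set, so $\R(K_n)$ is $\Q_n$ plus the Johnson edges; the degree is $k+(n-k)+k(n-k)$; the TS edges are counted through the triple identity; the distance upper bound uses slides followed by removals; the eccentricity is $\max\{k,n-k\}$; and for item~8 you use the isolated $\emptyset$ together with the $n$-connected spanning subgraph $\Q_n$. The only cosmetic differences are that you cite Proposition~\ref{prop:full-metric-bounds} for the lower distance bound instead of rederiving it, and that you attain the eccentricity with $T=\emptyset$ or $T=V\setminus S$, whereas the paper uses the complement alone.
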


\begin{proof}
Every subset of $V(K_n)$ is a mutual-visibility set. 
Consequently, the TAR edges form $\Q_n$, whereas the TS edges in level $k$ form $\J(n,k)$.
This proves the structural description and (1).

To prove (2) and (3), let $S$ be a configuration of cardinality $k$.
Each TS neighbor of $S$ is uniquely obtained by choosing a vertex
$x\in S$ and a vertex $y\in V(K_n)\setminus S$, and replacing $x$
with $y$. Hence, $S$ has exactly $k(n-k)$ TS neighbors. It also has
$k$ deletion neighbors and $n-k$ addition neighbors. Therefore, $\deg_{\R(K_n)}(S)=k+(n-k)+k(n-k)=n+k(n-k)$, proving (3).

The TAR skeleton has $n2^{n-1}$ edges.
The sum $\sum_{k=0}^{n}\binom{n}{k}k(n-k)=n(n-1)2^{n-2}$ counts triples $(S,x,y)$ with $x\in S$ and $y\notin S$: choose the ordered pair $(x,y)$ in $n(n-1)$ ways and the remaining memberships in $2^{n-2}$ ways.
Each TS edge is counted twice.
Therefore $\lvert E(\R(K_n)) \rvert = n2^{n-1}+n(n-1)2^{n-3}=n(n+3)2^{n-3}$, proving (2).

For the distance formula, let $A = S \setminus T$ and $B = T \setminus S$, and put $a = \lvert A \rvert$ and $b = \lvert B \rvert$.
Consider an arbitrary $ST$-path of length $\ell$.
Every vertex of $A$ must be removed.
Since each move removes at most one vertex of $A$, we have $\ell \ge a$.
Every vertex of $B$ must be inserted; at most one is inserted per move, so $\ell\ge b$.
Therefore, $\ell \ge \max \{a,b\}$.
Thus $d_{\R(K_n)}(S,T)\ge\max\{a,b\}$.

For the reverse inequality, write
$A=\{x_1,\ldots,x_a\}$ and $B=\{y_1,\ldots,y_b\}$, and let
$r=\min\{a,b\}$. For each $1\le  i\le  r$, slide the token from
$x_i$ to $y_i$. These moves are admissible because every two distinct
vertices of $K_n$ are adjacent. After these $r$ moves, if $a>b$,
delete the remaining vertices $x_{r+1},\ldots,x_a$; if $b>a$, add
the remaining vertices $y_{r+1},\ldots,y_b$. All intermediate
configurations are feasible because every subset of
$V(K_n)$ is mutually visible. The resulting $ST$-path has length
$r+\lvert a-b\rvert=\max\{a,b\}$. Consequently,
$d_{\R(K_n)}(S,T)=\max\{\lvert S\setminus T\rvert,
\lvert T\setminus S\rvert\}$, proving (4).

We now determine the eccentricity of every configuration.
Fix $S \subseteq V(K_n)$ with $\lvert S \rvert = k$.
For every configuration $T \subseteq V(K_n)$, we have $\lvert S \setminus T \rvert \le k$ and $\lvert T \setminus S \rvert \le n-k$.
Thus, by (4), $d_{\R(K_n)}(S,T) \le \max \{k,n-k\}$.
The complement $T=V(K_n)\setminus S$ attains this bound, since $\lvert S\setminus T\rvert=k$ and $\lvert T\setminus S\rvert=n-k$.
Therefore, $\operatorname{ecc}_{\R(K_n)} (S) = \max \{k,n-k\}$.

The maximum eccentricity is $n$, attained exactly for $k \in \{0,n\}$.
Moreover, the configurations $\emptyset$ and $V(K_n)$ are at distance $n$ by (4).
Hence, $\diam(\R(K_n)) = 
n$, proving (5).

For the radius and center, minimize $\max\{k,n-k\}$ over $0\le k\le n$.
Since $k+(n-k)=n$, this maximum is at least $\lceil n/2 \rceil$.
If $n$ is even, equality holds precisely when $k=n/2$.
If $n$ is odd, equality holds precisely when $k=(n-1)/2$ or $k=(n+1)/2$.
Consequently, $\operatorname{rad} (\R(K_n)) = \lceil n/2 \rceil$, and the center consists exactly of the configurations of cardinality $\lfloor n/2 \rfloor$ or $\lceil n/2 \rceil$.
This proves (6) and (7).

Finally, Theorem~\ref{thm:full-vertex-connectivity} gives (8).
\end{proof}

Figure~\ref{fig:R_M(K4)} illustrates the structural description in the case $n=4$.
The gray TAR edges form $\Q_4$.
The colored edges
represent TS moves within the nontrivial cardinality layers: the green
and blue edges induce copies of $K_4$ in
$\mathcal{R}_{\mathcal{M}}^{1}(K_4)$ and
$\mathcal{R}_{\mathcal{M}}^{3}(K_4)$, respectively, while the orange
edges induce the Johnson graph $\J(4,2)$ in
$\mathcal{R}_{\mathcal{M}}^{2}(K_4)$.

\tikzset{
    pics/K4/.style args={#1/#2/#3/#4}{
        code = {
            \node[
                circle, 
                draw = white, 
                fill = white, 
                inner sep = 15.5pt
            ] at (0,0) {};
            \draw[black] 
                (-0.4,-0.4) -- (-0.4,0.4) -- (0.4,0.4) -- (0.4,-0.4) -- (-0.4,-0.4)
                (-0.4,-0.4) -- (0.4,0.4)
                (-0.4,0.4) -- (0.4,-0.4)
            ;
            \fill[draw = black, fill = #1] (-0.4,-0.4) circle (3.5pt);
            \fill[draw = black, fill = #2] (-0.4,0.4) circle (3.5pt);
            \fill[draw = black, fill = #3] (0.4,0.4) circle (3.5pt);
            \fill[draw = black, fill = #4] (0.4,-0.4) circle (3.5pt);
        }
    }
}

\begin{figure}[h]
\centering
\begin{tikzpicture}[scale = 0.65,
    vertex/.style = {
        circle,
        fill = black,
        inner sep = 1.8pt
    },
    edge/.style = {
        line width = 1.1pt,
        color = gray
    }
]
    \def\escala{0.82}

    \node[vertex] (v1) at (-5.4, 7) {};
    \node[vertex] (v12) at (7.3, 7) {};
    \node[vertex] (v124) at (5.7, 4) {};
    \node[vertex] (v14) at (-7.2, 4) {};
    
    \node[vertex] (v134) at (-7.2, -7.8) {};
    \node[vertex] (V) at (5.7, -7.8) {};
    \node[vertex] (v123) at (7.3, -4.8) {};
    \node[vertex] (v13) at (-5.4, -4.8) {};
    
    \node[vertex] (O) at (-2.2, 3.1) {};
    \node[vertex] (v2) at (3.4, 3.1) {};
    \node[vertex] (v24) at (2, 0.9) {};
    \node[vertex] (v4) at (-3.3, 0.9) {};
    
    \node[vertex] (v34) at (-3.3, -4) {};
    \node[vertex] (v234) at (2, -4) {};
    \node[vertex] (v23) at (3.4, -2) {};
    \node[vertex] (v3) at (-2.2, -2) {};
    
    \draw[edge]
        (v1)--(v12)--(v124)--(v14)--(v1)--(v13)--(v123)--(V)--(v134)--(v13)
        (v14)--(v134)
        (v12)--(v123)
        (V)--(v124)
    ;
    
    \draw[edge] 
        (O)--(v2)--(v24)--(v4)--(O)--(v3)--(v23)--(v234)--(v34)--(v3)
        (v4)--(v34)
        (v24)--(v234)
        (v2)--(v23)
    ;
    
    \draw[edge] (v1)--(O);
    \draw[edge] (v12)--(v2);
    \draw[edge] (v124)--(v24);
    \draw[edge] (v4)--(v14);
    
    \draw[edge] (v13)--(v3);
    \draw[edge] (v134)--(v34);
    \draw[edge] (v123)--(v23);
    \draw[edge] (V)--(v234);

    \draw[edge, Green]
        (v1) -- (v2)
        (v3) -- (v4) -- (v1)
        (v2) -- (v4)
    ;
    \draw[edge, Green, bend right = 40] (v1) to (v3);
    \draw[edge, Green, bend right = 7] (v2) to (v3);

    \draw[edge, Blue]
        (v234) -- (v134)
        (v124) -- (v123) -- (v234)
        (v134) -- (v123)
    ;
    \draw[edge, Blue, bend right = 10] (v124) to (v234);
    \draw[edge, Blue, bend right = 10] (v134) to (v124);

    \draw[edge,Dandelion]
        (v14) -- (v13) -- (v34) -- (v23) -- (v24)
        (v34) -- (v24) -- (v14)
        (v14) -- (v34)
    ;
    \draw[edge, Dandelion, bend left = 15] (v13) to (v12);
    \draw[edge, Dandelion, bend right = 85] (v13) to (v23);
    \draw[edge, Dandelion, bend left = 13] (v12) to (v23);
    \draw[edge, Dandelion, bend right = 39] (v12) to (v24);
    \draw[edge, Dandelion, bend right = 6] (v12) to (v14);
    
    \pic[scale = \escala, transform shape] at (v4) {K4 = white/white/white/red};
    \pic[scale = \escala, transform shape] at (v14) {K4 = red/white/white/red};
    \pic[scale = \escala, transform shape] at (v124) {K4 = red/red/white/red};
    \pic[scale = \escala, transform shape] at (v24) {K4 = white/red/white/red};
    
    \pic[scale = \escala, transform shape] at (v234) {K4 = white/red/red/red};
    \pic[scale = \escala, transform shape] at (V) {K4 = red/red/red/red};
    \pic[scale = \escala, transform shape] at (v123) {K4 = red/red/red/white};
    \pic[scale = \escala, transform shape] at (v23) {K4 = white/red/red/white};
    
    \pic[scale = \escala, transform shape] at (O) {K4 = white/white/white/white};
    \pic[scale = \escala, transform shape] at (v1) {K4 = red/white/white/white};
    \pic[scale = \escala, transform shape] at (v12) {K4 = red/red/white/white};
    \pic[scale = \escala, transform shape] at (v2) {K4 = white/red/white/white};
    
    \pic[scale = \escala, transform shape] at (v3) {K4 = white/white/red/white};
    \pic[scale = \escala, transform shape] at (v13) {K4 = red/white/red/white};
    \pic[scale = \escala, transform shape] at (v134) {K4 = red/white/red/red};
    \pic[scale = \escala, transform shape] at (v34) {K4 = white/white/red/red};
\end{tikzpicture}
\caption{The reconfiguration graph $\mathcal{R}_{\mathcal{M}}(K_4)$. The gray edges are TAR edges joining configurations in consecutive cardinality layers. The blue, orange, and green edges are the TS edges within the layers $\mathcal{R}_{\mathcal{M}}^{3}(K_4)$, $\mathcal{R}_{\mathcal{M}}^{2}(K_4)$, and $\mathcal{R}_{\mathcal{M}}^{1}(K_4)$, respectively. In particular, $\mathcal{R}_{\mathcal{M}}^{2}(K_4)\cong \J(4,2)$, whereas $\mathcal{R}_{\mathcal{M}}^{1}(K_4) \cong \mathcal{R}_{\mathcal{M}}^{3}(K_4)\cong K_4$.}

\label{fig:R_M(K4)}
\end{figure}

\subsection{Disconnected host graphs}

We now allow disconnected hosts.
Let $G_1,\ldots,G_c$ be the connected components of \(G\), and write $\mu_i = \mu(G_i)$.
A mutual-visibility set cannot meet distinct components, since no geodesic joins them.
We say that a configuration $S$ is \emph{supported in} a subgraph $H$ of $G$ if $S \subseteq V(H)$. Thus, every nonempty configuration is supported in a unique component of $G$.

We regard each $\R(G_i)$ as its natural copy in $\R(G)$.
These copies share the empty configuration and are otherwise pairwise disjoint.

For graphs $H_1,\ldots,H_t$, we denote their disjoint union by $\plus_{i=1}^t H_i$.
This takes vertex-disjoint copies of the $H_i$ with no edges between copies.

\begin{proposition}[Decomposition by components]
\label{prop:full-component-decomposition}
Let $G$ be a graph with connected components $G_1,\ldots,G_c$, where $c \ge 1$, and put $\mu_i = \mu(G_i)$.
Then $\mu(G) = \max_{1 \le i \le c} \mu_i$.
For every integer $k$ with $1 \le k \le \mu(G)$,
\begin{displaymath}
    \R^k(G) \cong \bigplus_{\substack{1 \le i \le c \\ k \le \mu_i}} \R^k(G_i).
\end{displaymath}
For $k=0$, the layer $\R^0(G)$ consists of the single vertex $\emptyset$.
Furthermore, $\R(G)=\bigcup_{i=1}^{c}\R(G_i)$, with $\R(G_i)\cap\R(G_j)=\{\emptyset\}$ for $i\ne j$.
Equivalently, \(\R(G)\) is obtained by identifying the empty configurations of the graphs \(\R(G_i)\) into a single vertex..
If $c\ge2$, then $\emptyset$ is a cut vertex and $\kappa(\R(G))=1$.
Moreover, if $c \ge 2$ and the components are relabeled so that $\mu_1 \ge \mu_2 \ge \cdots \ge \mu_c$, then $\diam(\R(G)) = \max \left\{D,\mu_1+\mu_2 \right\}$, where $D = \max_{1 \le i \le c} \{\diam(\R(G_i))\}$.
\end{proposition}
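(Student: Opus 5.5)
The plan rests on one observation, already noted before the statement: a nonempty mutual-visibility set of $G$ lies in a single component. I also use that geodesics in $G$ between vertices of $G_i$ stay inside $G_i$. From these two facts I will show that for $S\subseteq V(G_i)$ we have $S\in\M(G)$ if and only if $S\in\M(G_i)$. The reason is that $d_G$ and $d_{G_i}$ agree on $V(G_i)$ and the geodesics coincide. This yields $\M(G)=\{\emptyset\}\cup\bigcup_i(\M(G_i)\setminus\{\emptyset\})$, and the union is disjoint apart from $\emptyset$. Taking maximum cardinalities gives $\mu(G)=\max_i\mu_i$.

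Next I compare edges. A TS move slides a token along an edge of $G$, so it cannot leave a component. Two nonempty configurations supported in different components therefore differ in at least two vertices and are never joined by a TS move. They are also never joined by a TAR move: that would need $\lvert S\triangle T\rvert=1$, but the symmetric difference is $S\cup T$, which has size at least $2$. The only exception is a TAR move to or from $\emptyset$, and that lies inside each $\R(G_i)$. Within one component, adjacency in $\R(G)$ coincides with adjacency in $\R(G_i)$. This gives $\R(G)=\bigcup_i\R(G_i)$ with pairwise intersections $\{\emptyset\}$, which is the identification description. For $k\ge1$ the layer $\R^k(G)$ contains no $\emptyset$. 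It is the disjoint union of the nonempty layers $\R^k(G_i)$, and such a layer exists exactly when $k\le\mu_i$, by heredity (Proposition~\ref{prop:cited-heredity}).

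If $c\ge2$, every path between configurations supported in distinct components passes through $\emptyset$. Both sides are nonempty, since singletons exist in each component. Hence $\emptyset$ is a cut vertex and $\kappa=1$.

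The diameter formula is the main step. Distances within $\R(G_i)$ are preserved in $\R(G)$: each copy is a block-like piece attached at a cut vertex, so a shortest path between two of its vertices never needs to leave it. I will argue this by noting that any excursion out of the copy leaves and returns through $\emptyset$ and can be deleted. So pairs in a common component contribute exactly $D$.

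For $S\subseteq V(G_i)$ and $T\subseteq V(G_j)$ nonempty with $i\ne j$, the distance is $d(S,\emptyset)+d(\emptyset,T)=\lvert S\rvert+\lvert T\rvert$, using Proposition~\ref{prop:full-metric-bounds} inside each copy. This is maximized at $\mu_i+\mu_j$, so the maximum over $i\ne j$ is $\mu_1+\mu_2$. Combining the two cases gives $\max\{D,\mu_1+\mu_2\}$. The only care needed is the distance-preservation claim and the fact that the definitions of $\M$ and of moves are intrinsic to each component.
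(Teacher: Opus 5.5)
Your proposal is correct and follows essentially the same route as the paper. Feasible sets are supported in a single component, and neither TS nor TAR moves between nonempty configurations can change that component, so the copies $\R(G_i)$ meet only at the cut vertex $\emptyset$; cross-component distances are then $\lvert S\rvert+\lvert T\rvert$ by Proposition~\ref{prop:full-metric-bounds}. Your remark that a path between two configurations of one copy cannot leave it (it would have to pass through $\emptyset$ twice) also justifies the within-copy diameter step, which the paper states without comment.
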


\begin{proof}
Every nonempty mutual-visibility set of $G$ belongs to exactly one $\M(G_i)$.
Conversely, $\M(G_i)\subseteq\M(G)$.
Therefore, $\mu(G) = \max_{1 \le i \le c} \mu_i$.
The assertion for $k=0$ is immediate.

Let $k \ge 1$ and $S \in \M_k(G_i)$.
A TS move replaces $x\in S$ by an unoccupied neighbor $y$.
Since $xy\in E(G)$, $y\in V(G_i)$.
Thus, the resulting configuration still belongs to $\M_k(G_i)$.
TS adjacency within $V(G_i)$ agrees in $G_i$ and $G$.
Consequently, $\R^k(G)$ is the disjoint union of the graphs $\R^k(G_i)$ for which $k \le \mu_i$.

A TAR move whose two endpoints are nonempty also preserves the host component supporting the configuration.
Removals leave the remaining tokens in that component; additions outside it violate mutual visibility.
Hence, any reconfiguration sequence between nonempty configurations contained in different components of $G$ must pass through $\emptyset$.
Thus the copies $\R(G_i)$ meet only at $\emptyset$.
If $c \ge 2$, deleting $\emptyset$ separates the nonempty configurations supported in different components and hence $\kappa(\R(G)) = 1$.

Now let $S$ and $T$ be supported in different components.
Every $ST$-path passes through $\emptyset$, and Proposition~\ref{prop:full-metric-bounds} gives $d_{\R(G)}(S,T) = d_{\R(G)}(S,\emptyset)+d_{\R(G)}(\emptyset,T) = \lvert S \rvert + \lvert T \rvert$.
The maximum distance across summands is $\mu_1+\mu_2$; within the $i$-th summand it is $\diam(\R(G_i))$.
Maximizing gives the formula.
\end{proof}

\subsection{Isometric and convex subgraphs}

Passing to an induced subgraph need not preserve mutual visibility: vertex deletion may alter distances and witnessing geodesics.
A subgraph $H$ of $G$ is \emph{convex} if every geodesic in $G$ between two vertices of $H$ is entirely contained in $H$.
In particular, every convex subgraph is isometric.

We recall the following results on convex subgraphs.

\begin{lemma}[\citealp{DiStefano2022}]
\label{lem:cited-convex-restriction}
Let $H$ be a convex subgraph of $G$, and let $S \subseteq V(G)$ be a mutual-visibility set of $G$.
Then $S \cap V(H)$ is a mutual-visibility set of $H$.
\end{lemma}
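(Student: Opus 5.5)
The plan is a direct check against the definition, using the fact that convexity keeps all relevant geodesics inside $H$. Let $u,v$ be distinct vertices of $S\cap V(H)$. Since $u,v\in S$ and $S\in\M(G)$, there is a $uv$-geodesic $P$ in $G$ with $V(P)\cap S=\{u,v\}$. We will show that $P$ is a $uv$-geodesic of $H$ whose internal vertices avoid $S\cap V(H)$. Because $u$ and $v$ are arbitrary, this proves $S\cap V(H)\in\M(H)$.

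\emph{Step 1: $P$ lies in $H$.} By convexity, every geodesic of $G$ between two vertices of $H$ is entirely contained in $H$. Hence $V(P)\subseteq V(H)$ and $E(P)\subseteq E(H)$. The edge containment is part of the meaning of ``contained in $H$'' for subgraphs. If one only has vertex containment, then we should check that convex subgraphs are taken induced; otherwise we restate the definition accordingly.

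\emph{Step 2: $P$ is a geodesic of $H$.} Since every convex subgraph is isometric, $d_H(u,v)=d_G(u,v)$. The length of $P$ equals $d_G(u,v)$, and $P$ is a path in $H$. Therefore $P$ realizes $d_H(u,v)$ and is a $uv$-geodesic in $H$.

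\emph{Step 3: the internal vertices of $P$ avoid $S\cap V(H)$.} We have $V(P)\cap (S\cap V(H))\subseteq V(P)\cap S=\{u,v\}$. Both $u$ and $v$ lie in $S\cap V(H)$, so equality holds.

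The only delicate point is Step 1. It needs both the vertices and the edges of the $G$-geodesic to lie in $H$. With the convexity definition as stated, this is immediate, so we expect no substantial obstacle.
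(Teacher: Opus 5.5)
Your proof is correct and is the standard argument: convexity keeps the witnessing $G$-geodesic inside $H$, where it is still a geodesic and still avoids the other tokens. The paper only cites this lemma from Di Stefano, but it uses exactly this reasoning in the proof of Corollary~\ref{cor:full-convex-embedding}; also, your Step 2 does not need isometry, since $d_H\ge d_G$ holds for every subgraph $H$.
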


Lemma~\ref{lem:cited-convex-restriction} is also stated by \citet[Lemma~2.3]{BujtasKlavzarTian2025}, who credit Di Stefano; both sources state the same restriction property.

\begin{lemma}[\citealp{DiStefano2022}]
\label{lem:cited-convex-monotonicity}
Let $H$ be a convex subgraph of a graph $G$.
Then $\mu(H) \le \mu(G)$.
\end{lemma}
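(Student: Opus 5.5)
The idea is to derive Lemma~\ref{lem:cited-convex-monotonicity} from Lemma~\ref{lem:cited-convex-restriction}, or to prove it directly. It is cleaner to prove the direction actually needed: every mutual-visibility set of $H$ is a mutual-visibility set of $G$. Then a maximum set of $H$ is feasible in $G$, which gives $\mu(H)\le\mu(G)$.

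First, convexity implies isometry, so $d_H(u,v)=d_G(u,v)$ for all $u,v\in V(H)$. Take $S\in\M(H)$ and distinct $u,v\in S$. By definition there is a $uv$-geodesic $P$ of $H$ with $V(P)\cap S=\{u,v\}$.

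Second, $P$ is a path in $G$ of length $d_H(u,v)=d_G(u,v)$, so $P$ is also a $uv$-geodesic of $G$. Its internal vertices avoid $S$, and that condition does not depend on the ambient graph. Hence $S\in\M(G)$.

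Third, choose $S\in\M(H)$ with $\lvert S\rvert=\mu(H)$. The previous step gives $S\in\M(G)$, hence $\mu(G)\ge\mu(H)$.

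Only isometry is used here, so the argument works for any isometric subgraph $H$. There is no real obstacle. The one subtle point is to avoid arguing through Lemma~\ref{lem:cited-convex-restriction}: that lemma restricts sets from $G$ to $H$, which is the wrong direction for a lower bound on $\mu(G)$. The inequality needs the extension direction, from sets of $H$ to sets of $G$.
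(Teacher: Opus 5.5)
Your proof is correct and is essentially the argument the paper gives. The lemma itself is only cited, but the first paragraph of the proof of Theorem~\ref{thm:full-isometric-embedding} proves $\M(H)\subseteq\M(G)$ for every isometric subgraph $H$ by the same geodesic-transfer step, and applying this to a set of cardinality $\mu(H)$ gives the inequality, with your correct observation that Lemma~\ref{lem:cited-convex-restriction} points in the wrong direction for this bound.
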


We extend the feasible-set inclusion to isometric subgraphs and describe induced embeddings of the full reconfiguration graph and its layers.

\begin{theorem}[Isometric embeddings]
\label{thm:full-isometric-embedding}
Let $G$ be a graph and let $H$ be an isometric subgraph of $G$.
Then, $\M(H) \subseteq \M(G)$ and the natural inclusion gives an isomorphism from $\R(H)$ onto the subgraph of $\R(G)$ induced by $\M(H)$.
The same conclusion holds for every fixed-cardinality layer.
\end{theorem}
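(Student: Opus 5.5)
The plan is to prove the inclusion $\M(H)\subseteq\M(G)$ first, and then check that the two kinds of adjacency agree on $\M(H)$.

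\emph{Step 1: the inclusion.} Let $S\in\M(H)$ and let $u,v\in S$ be distinct. By definition there is a $uv$-geodesic $P$ in $H$ with $V(P)\cap S=\{u,v\}$. Its length is $d_H(u,v)$, which equals $d_G(u,v)$ because $H$ is isometric. Since $P$ is a path of $G$ of length $d_G(u,v)$, it is a $uv$-geodesic of $G$, and its internal vertices still avoid $S$. Hence $S\in\M(G)$. This is the only place where isometry is used. For a merely induced subgraph the argument fails, since a geodesic of $H$ may be too long in $G$.

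\emph{Step 2: adjacency on $\M(H)$.} Let $\iota\colon\M(H)\to\M(G)$ be the inclusion map. It is injective, so it suffices to show that for $S,T\in\M(H)$ we have $ST\in E(\R(H))$ if and only if $ST\in E(\R(G))$.

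For TAR moves the condition $\lvert S\triangle T\rvert=1$ does not depend on the host, so TAR adjacency is the same in both graphs.

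For TS moves we need $S\setminus T=\{x\}$, $T\setminus S=\{y\}$ and $xy$ an edge of the host.
\begin{itemize}
\item If $xy\in E(H)$, then $xy\in E(G)$, since $H$ is a subgraph of $G$.
\item Conversely, suppose $xy\in E(G)$ with $x,y\in V(H)$. Then $d_G(x,y)=1$, so isometry gives $d_H(x,y)=1$, that is, $xy\in E(H)$.
\end{itemize}
So TS adjacency also agrees, and we need not assume that $H$ is induced; isometric subgraphs are automatically induced. In both directions, both endpoints are feasible in the relevant host by Step 1 and by hypothesis. This shows that $\iota$ is an isomorphism from $\R(H)$ onto the subgraph of $\R(G)$ induced by $\M(H)$.

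\emph{Step 3: fixed-cardinality layers.} The map $\iota$ preserves cardinality, so $\M_k(H)\subseteq\M_k(G)$. Restricting the isomorphism of Step 2 to $\M_k(H)$ gives an isomorphism from $\R^k(H)$ onto the subgraph of $\R^k(G)$ induced by $\M_k(H)$. Here we use that $\R^k$ is itself an induced subgraph of $\R$ on both sides.

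I expect no real obstacle. The one point needing care is the TS direction $G\Rightarrow H$, which requires the edge $xy$ to lie in $H$. It would fail for arbitrary subgraphs, but isometry forces it.
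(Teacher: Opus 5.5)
Your proposal is correct and follows the paper's proof essentially step for step. Both arguments show that an $H$-geodesic is a $G$-geodesic by isometry, note that TAR adjacency depends only on the symmetric difference, handle TS adjacency via $E(H)\subseteq E(G)$ in one direction and the fact that isometric subgraphs are induced in the other, and then restrict to cardinality $k$.
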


\begin{proof}
Let $S \in \M(H)$.
For every $u,v \in S$, there is a $uv$-geodesic in $H$ whose internal vertices avoid $S$.
Since $H$ is isometric, this path is also a geodesic in $G$ and hence $S \in \M(G)$.

It remains to compare adjacency.
TAR adjacency depends only on symmetric difference, so is preserved in both directions.
TS adjacency from $\R(H)$ to $\R(G)$ is preserved because $E(H) \subseteq E(G)$.
Conversely, every isometric subgraph is induced: if $x,y \in V(H)$ and $xy \in E(G)$, then $d_G(x,y) = 1 = d_H(x,y)$ and therefore $xy \in E(H)$.
Hence, every TS edge of $\R(G)$ with both endpoints in $\M(H)$ is also present in $\R(H)$.
Thus the image is an induced subgraph.
Restricting to cardinality $k$ proves the layer statement.
\end{proof}

Configurations of $G$ supported in $H$ need not belong to $\M(H)$.
Isometry guarantees that distances agree, but it does not imply that every geodesic of $G$ with endpoints in $H$ lies in $H$.
Convexity rules out this phenomenon and yields equality.

\begin{corollary}[Convex subgraphs]
\label{cor:full-convex-embedding}
If $H$ is a convex subgraph of $G$, then $\M(H) = \{S \in \M(G) \colon S \subseteq V(H)\}$.
Consequently, $\R(H)$ is precisely the subgraph of $\R(G)$ induced by the configurations supported in $H$.
\end{corollary}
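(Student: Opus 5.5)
We prove the two inclusions separately and then deduce the statement about $\R(H)$ from Theorem~\ref{thm:full-isometric-embedding}.

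\emph{First inclusion.} Since $H$ is convex, it is isometric. Theorem~\ref{thm:full-isometric-embedding} then gives $\M(H)\subseteq\M(G)$. Every member of $\M(H)$ is a subset of $V(H)$, so $\M(H)\subseteq\{S\in\M(G)\colon S\subseteq V(H)\}$.

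\emph{Reverse inclusion.} Let $S\in\M(G)$ with $S\subseteq V(H)$. By Lemma~\ref{lem:cited-convex-restriction}, $S\cap V(H)$ is a mutual-visibility set of $H$, and $S\cap V(H)=S$, so $S\in\M(H)$. If one prefers to avoid the cited lemma, the argument is direct. For $u,v\in S$, take a $uv$-geodesic $P$ in $G$ whose internal vertices avoid $S$. By convexity, $P$ lies in $H$. Since $H$ is isometric, $P$ is also a geodesic of $H$, and its internal vertices still avoid $S$. This gives the equality of the two families.

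\emph{The induced subgraph.} By Theorem~\ref{thm:full-isometric-embedding}, the natural inclusion is an isomorphism from $\R(H)$ onto the subgraph of $\R(G)$ induced by $\M(H)$. By the equality just proved, $\M(H)$ is exactly the set of configurations of $G$ supported in $H$. Hence $\R(H)$ is precisely the subgraph of $\R(G)$ induced by those configurations, which is the second assertion.

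No step here is a real obstacle. The only point requiring care is that convexity is used twice: once for isometry, to invoke Theorem~\ref{thm:full-isometric-embedding} and to preserve TS adjacency via inducedness, and once to keep the witnessing geodesics of $G$ inside $H$.
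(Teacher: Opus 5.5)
Your proof is correct and follows essentially the same route as the paper: isometry of $H$ together with Theorem~\ref{thm:full-isometric-embedding} gives $\M(H)\subseteq\M(G)$ and the induced-subgraph statement, and convexity keeps the witnessing $G$-geodesics inside $H$ for the reverse inclusion. Your explicit remark that such a path is also a geodesic of $H$, because $H$ is isometric, fills in a step the paper leaves implicit.
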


\begin{proof}
A convex subgraph is isometric, so Theorem~\ref{thm:full-isometric-embedding} gives one inclusion.
Conversely, let $S \in \M(G)$ with $S \subseteq V(H)$.
Every geodesic in $G$ between vertices of $H$ is contained in $H$.
Thus the geodesics witnessing visibility for $S$ remain in $H$ and avoid the other vertices of $S$. 
Thus, $S \in \M(H)$.
\end{proof}

\begin{corollary}[Clique-boundary deletion]
\label{cor:full-clique-boundary}
Let $G$ be a graph and let $D\subsetneq V(G)$, with external boundary $\partial D=N_G(D)\setminus D$.
If $\partial D$ induces a clique, then $G-D$ is convex in $G$.
Consequently, $\M(G-D) = \{S \in \M(G) \colon S \cap D = \emptyset\}$, and $\R(G-D)$ is the subgraph of $\R(G)$ induced by the configurations that avoid $D$.
\end{corollary}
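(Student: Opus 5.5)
The plan is to show directly that $H=G-D$ is convex in $G$ and then apply Corollary~\ref{cor:full-convex-embedding}. First observe that $H$ is nonempty because $D\subsetneq V(G)$. Take $u,v\in V(H)$ and a $uv$-geodesic $P$ in $G$, and suppose for contradiction that $P$ meets $D$. Since the endpoints of $P$ lie outside $D$, the vertices of $P$ in $D$ are grouped into maximal subpaths lying in $D$. Pick one such maximal subpath, and let $a$ and $b$ be the vertices of $P$ immediately before and after it. Both exist because $u,v\notin D$, and both lie outside $D$.

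Next I would check that $a$ and $b$ lie in $\partial D$: each is adjacent on $P$ to a vertex of $D$ and is not itself in $D$. The segment of $P$ from $a$ to $b$ has at least one internal vertex, so its length is at least $2$. Because $\partial D$ induces a clique, either $a=b$ or $ab\in E(G)$, so $d_G(a,b)\le 1$. A subpath of a geodesic is a geodesic, so $d_G(a,b)$ equals the length of this segment, which is at least $2$. This is a contradiction. Hence every geodesic between vertices of $H$ avoids $D$ and lies in $H=G-D$, since $H$ is the induced subgraph on $V(G)\setminus D$ and so contains all edges of $P$. This proves that $H$ is convex.

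The remaining assertions then follow from Corollary~\ref{cor:full-convex-embedding}, because a configuration is supported in $G-D$ exactly when it avoids $D$. The only delicate points are choosing a maximal run inside $D$, which guarantees that the flanking vertices $a,b$ exist outside $D$, and allowing the degenerate case $a=b$. Since a geodesic is a path, $a=b$ cannot actually occur, but the clique bound $d_G(a,b)\le1$ handles either case. No real obstacle is expected.
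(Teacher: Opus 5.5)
Your proof is correct and follows the paper's argument. Like the paper, you take a maximal run of the geodesic inside $D$, observe that its flanking vertices lie in the clique $\partial D$, and contradict the geodesic property. The paper phrases this as shortcutting the run by the edge between the flanking vertices, while you compare their distance with the length of a geodesic subpath; both then conclude via Corollary~\ref{cor:full-convex-embedding}.
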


\begin{proof}
Suppose that a geodesic with endpoints outside $D$ enters $D$.
Let $x$ and $y$ be the vertices immediately preceding and following a maximal subpath contained in $D$.
Then, $x,y \in \partial D$.
Since $\partial D$ is a clique, replacing this subpath with the edge $xy$ produces a shorter path.
Hence every geodesic between vertices of $G-D$ stays there.
Thus, $G-D$ is convex, and the conclusion follows from Corollary~\ref{cor:full-convex-embedding}.
\end{proof}

When $D=\{v\}$, this is the simplicial-vertex case, since $\partial D=N_G(v)$ is a clique.
The fact that $\R^k(H)$ is an induced subgraph of $\R^k(G)$ does not imply that distinct components remain disconnected in the larger layer: paths in $\R^k(G)$ may leave and reenter the copy.
We next study layers of complete bipartite graphs.
In this family, mutual visibility is determined by the number of selected vertices in each partite class.

\section{Complete bipartite graphs}

Let $G = K_{m,n}$ have bipartition $A \cup B$, where $\lvert A \rvert = m$ and $\lvert B \rvert = n$.
For $S \subseteq V(G)$, we say that $S$ is of type $(a,b)$ if $a = \lvert S \cap A \rvert$ and $b = \lvert S \cap B \rvert$.
The extremal value is known.

\begin{theorem}[\citealp{DiStefano2022}]
\label{thm:cited-bipartite-mu}
Let $G=K_{m,n}$ be a complete bipartite graph with $m,n\ge3$.
Then $\mu(G)=m+n-2$.
\end{theorem}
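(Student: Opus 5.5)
The plan is to characterize mutual-visibility sets of $K_{m,n}$ by type, and then to show that a set of size $m+n-2$ with the right type exists while every larger set fails. Distances in $K_{m,n}$ are simple: two vertices in different classes are adjacent. Two distinct vertices in the same class are at distance $2$, and their geodesics are exactly the paths $u\,w\,v$ with $w$ in the opposite class.

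The first step is to determine which types $(a,b)$ are feasible. Pairs in different classes are always mutually visible, since the edge joining them has no internal vertices. A pair $u,v\in S\cap A$ needs some $w\in B\setminus S$; if $a\ge 2$ this requires $b<n$. Symmetrically, if $b\ge 2$ we need $a<m$. Hence $S$ is a mutual-visibility set if and only if the two implications $a\ge2\Rightarrow b\le n-1$ and $b\ge2\Rightarrow a\le m-1$ both hold.

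For the lower bound, take $a=m-1$ and $b=n-1$. Since $m,n\ge3$, both $a$ and $b$ are at least $2$, and both conditions hold, so $\mu(G)\ge m+n-2$.

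For the upper bound, suppose $|S|\ge m+n-1$. Then $S$ omits at most one vertex, so either $a=m$ or $b=n$. Say $a=m$; the other case is symmetric. Then $a\ge3\ge2$, so the first condition forces $b\le n-1$, and hence $b=n-1$. But then $b=n-1\ge2$, and the second condition forces $a\le m-1$, a contradiction. Therefore $\mu(G)=m+n-2$.

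The only delicate point is the use of $m,n\ge3$. It guarantees $n-1\ge2$, which is exactly what activates the second condition in the upper bound. For smaller classes the conclusion can fail.
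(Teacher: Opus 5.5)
Your proof is correct and complete. The paper cites this value from Di Stefano without proving it, but your argument follows the paper's own route: your type criterion is exactly Theorem~\ref{thm:Kmn-characterization}, and your argument ruling out sizes $\ge m+n-1$ is the reasoning behind Corollary~\ref{cor:Kmn-maximal}, whose maximal types $(m-1,n-1)$, $(m,1)$, $(1,n)$ have sizes $m+n-2$, $m+1$, $n+1$, the largest being $m+n-2$ when $m,n\ge 3$.
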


The feasible configurations of $K_{m,n}$ follow directly from its length-two geodesics.
For the balanced case, \citet[Proposition~3.2]{BujtasKlavzarTian2025} give an equivalent case description in the proof of their visibility-polynomial formula; they also note that the unbalanced case can be treated similarly.
We state the conditions uniformly for $m,n\ge2$ because they identify the feasible levels used in the layer results below.

\begin{theorem}[Set characterization]
\label{thm:Kmn-characterization}
Let $m,n\ge2$, and let $A,B$ be the partite classes of $K_{m,n}$, with $\lvert A\rvert=m$ and $\lvert B\rvert=n$.
A set $S\subseteq V(K_{m,n})$ of type $(a,b)$, where $a=\lvert S\cap A\rvert$ and $b=\lvert S\cap B\rvert$, is a mutual-visibility set if and only if the following conditions hold:
\begin{enumerate}[(i)]
    \item $a \le 1$ or $b < n$, and
    \item $b \le 1$ or $a < m$.
\end{enumerate}
\end{theorem}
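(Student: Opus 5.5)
The plan is to use the geodesic structure of $K_{m,n}$. Two vertices in different classes are adjacent, so their only geodesic is the edge itself and they always see each other. Two distinct vertices $u,v$ in the same class are at distance two. Their geodesics are exactly the paths $u,w,v$ with $w$ in the opposite class. Hence, for a set $S$, mutual visibility reduces to one requirement: every pair of distinct tokens in the same class has a common neighbour outside $S$. Since every vertex of the opposite class is a common neighbour, this means the opposite class must contain an unoccupied vertex. I will state this reduction as the first step. It follows directly from the definition and the fact that $\diam(K_{m,n})=2$ when $m,n\ge2$.

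Next I will translate the reduction into conditions on the type $(a,b)$. Pairs of tokens inside $A$ exist exactly when $a\ge2$. For them, visibility holds if and only if some vertex of $B$ is unoccupied, that is, $b<n$. So the pairs in $A$ impose the condition ``$a\le1$ or $b<n$'', which is (i). Symmetrically, pairs in $B$ exist exactly when $b\ge2$ and require $a<m$, giving (ii). Cross pairs impose nothing.

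For the ``if'' direction, assume (i) and (ii). Take $u,v\in S$. If they lie in different classes, the edge $uv$ witnesses visibility. If both lie in $A$, then $a\ge2$, so (i) gives $b<n$. Choose $w\in B\setminus S$; then $u,w,v$ is a geodesic whose internal vertex avoids $S$. The case $u,v\in B$ is symmetric using (ii).

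For the ``only if'' direction, argue by contrapositive. Suppose $a\ge2$ and $b=n$. Then two tokens in $A$ are at distance two, and every geodesic between them passes through a vertex of $B\subseteq S$, so $S\notin\M(K_{m,n})$. The failure of (ii) is symmetric.

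There is no real obstacle. The only point needing care is the claim that same-class geodesics are exactly the length-two paths through the opposite class. This holds because $K_{m,n}$ with $m,n\ge2$ has no edges within a class, and any two same-class vertices have a common neighbour. The hypothesis $m,n\ge2$ guarantees that both classes are nonempty, so these distances are as claimed.
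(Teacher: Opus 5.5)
Your proposal is correct and follows essentially the same route as the paper. Two vertices in the same class are at distance two, and every geodesic between them passes through the opposite class, so they are mutually visible exactly when that class has an unoccupied vertex; cross pairs are adjacent and impose nothing. You spell out the ``if'' and ``only if'' directions separately, whereas the paper states them as a single equivalence.
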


\begin{proof}
Assume first that $a \ge 2$.
Any two vertices of $S \cap A$ are at distance two, and every geodesic joining them has its internal vertex in $B$.
Thus any such pair is mutually visible if and only if $B\setminus S\ne\emptyset$, that is, $b<n$.
This proves the first condition.
By symmetry, if $b \ge 2$, then the vertices of $S \cap B$ are pairwise mutually visible if and only if $a<m$.
Vertices in different partite classes are adjacent, imposing no further restriction.
The conditions are therefore necessary and sufficient.
\end{proof}

A mutual-visibility set $S$ is \emph{maximal} if no proper superset is mutually visible.
It is \emph{maximum} if $\lvert S \rvert = \mu(G)$.
Maximum sets are maximal, but the converse may fail.
The proof of \citet[Proposition~6]{BresarYero2024} considers the same patterns of maximal mutual-visibility sets.
We record them explicitly in type notation.

\begin{corollary}[Maximal sets]
\label{cor:Kmn-maximal}
Let $m,n \ge 3$.
A mutual-visibility set of $K_{m,n}$ is maximal if and only
if it is of type $(m-1,n-1)$, $(m,1)$ or $(1,n)$.
\end{corollary}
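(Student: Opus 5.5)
The plan is to work entirely with types, using Theorem~\ref{thm:Kmn-characterization}. Whether a set is a mutual-visibility set depends only on its type $(a,b)$, and adding a vertex raises $a$ or $b$ by one. So $S$ is maximal exactly when its type $(a,b)$ is feasible while every available increment is infeasible. The available increments are $(a+1,b)$ when $a<m$ and $(a,b+1)$ when $b<n$. First I list the feasible types for $m,n\ge3$. Condition (i) excludes the types with $a\ge2$ and $b=n$, and condition (ii) excludes those with $b\ge2$ and $a=m$. The feasible types are therefore:
\begin{itemize}
\item all $(a,b)$ with $a\le m-1$ and $b\le n-1$;
\item $(0,n)$ and $(1,n)$;
\item $(m,0)$ and $(m,1)$.
\end{itemize}

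\emph{Sufficiency.} Consider type $(m-1,n-1)$. Adding a vertex of $A$ gives $(m,n-1)$ with $n-1\ge2$, which violates (ii). Adding a vertex of $B$ gives $(m-1,n)$ with $m-1\ge2$, which violates (i). This is the step where $m,n\ge3$ is used. For type $(m,1)$, no vertex of $A$ is available, and adding one of $B$ gives $(m,2)$, which violates (ii). Type $(1,n)$ is symmetric.

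\emph{Necessity.} Let $(a,b)$ be a feasible type not in the list. I split according to the description of feasible types above.
\begin{itemize}
\item If $a\le m-1$ and $b\le n-1$ with $(a,b)\ne(m-1,n-1)$, then one coordinate can be increased while staying in the box $a\le m-1$, $b\le n-1$. The resulting type is feasible, so $S$ has a feasible proper superset.
\item If the type is $(0,n)$, then $(1,n)$ is feasible, since (i) holds with $a=1$ and (ii) holds because $1<m$.
\item Symmetrically, $(m,0)$ extends to $(m,1)$.
\end{itemize}
In every case $S$ is not maximal.

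No step is a real obstacle. The only care needed is the bookkeeping of the boundary types $(0,n)$ and $(m,0)$, and checking that the hypothesis $m,n\ge3$ is exactly what makes $(m-1,n-1)$ non-extendable. When $m=2$, for instance, the type $(m-1,n)=(1,n)$ becomes feasible, so the classification changes.
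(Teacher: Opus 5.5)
Your proposal is correct and follows essentially the same route as the paper. Both arguments reduce everything to types via Theorem~\ref{thm:Kmn-characterization}, check that $(m-1,n-1)$, $(m,1)$ and $(1,n)$ cannot be extended using the same one-step increments, and prove necessity by splitting on whether a partite class is completely filled; your upfront list of feasible types is that same case split, stated explicitly.
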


\begin{proof}
Adding either of the two omitted vertices to a set of type $(m-1,n-1)$ fills one partite class while the other contains at least two tokens.
A set of type $(m,1)$ cannot receive another vertex of $B$, and the case $(1,n)$ is symmetric.
Hence all three types are maximal.

Conversely, let $S$ be maximal.
Assume first that $a<m$ and $b<n$.
If $a \le m-2$, a vertex of $A \setminus S$ may be added while both partite classes remain incomplete, contrary to maximality.
Hence $a = m-1$, and, by symmetry, $b = n-1$.
If $b = n$, Theorem~\ref{thm:Kmn-characterization} gives $a \le 1$, and maximality then gives $a=1$.
The case $a=m$ similarly gives $b=1$.
\end{proof}

Thus, maximal mutual-visibility sets need not be maximum.
For instance, a set of type $(1,5)$ in $K_{4,5}$ is maximal and has cardinality $6<\mu(K_{4,5})=7$.

For a statement $\mathcal P$, write $\mathbf{1}_{\mathcal P}=1$ if $\mathcal{P}$ is true and $0$ otherwise.

\begin{corollary}[Enumeration]
\label{cor:Kmn-count}
For $m,n \ge 2$ and $0 \le k \le m+n$, the number $r_k = \lvert \M_k(K_{m,n}) \rvert$ is
\begin{displaymath}
    \begin{split}
        r_k = \binom{m+n}{k} - \mathbf{1}_{2 \le k-n \le m}\binom{m}{k-n} - \mathbf{1}_{2 \le k-m \le n}\binom{n}{k-m} + \mathbf{1}_{k = m+n}.
    \end{split}
\end{displaymath}
\end{corollary}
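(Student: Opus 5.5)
We count the complement. By Theorem~\ref{thm:Kmn-characterization}, a $k$-subset $S$ of type $(a,b)$ with $a+b=k$ is infeasible exactly when condition (i) or condition (ii) fails. Condition (i) fails when $b=n$ and $a\ge2$. Condition (ii) fails when $a=m$ and $b\ge2$. Let $N_1$ be the family of $k$-subsets of the first kind and $N_2$ the family of the second kind. Then
\begin{displaymath}
    r_k=\binom{m+n}{k}-\lvert N_1\rvert-\lvert N_2\rvert+\lvert N_1\cap N_2\rvert.
\end{displaymath}

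Count $N_1$ first. A set in $N_1$ contains all of $B$, and its part in $A$ is an arbitrary subset of size $a=k-n$. This is subject to $2\le a\le m$, so $\lvert N_1\rvert=\mathbf{1}_{2\le k-n\le m}\binom{m}{k-n}$. Symmetrically, $\lvert N_2\rvert=\mathbf{1}_{2\le k-m\le n}\binom{n}{k-m}$.

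A set in $N_1\cap N_2$ contains both $A$ and $B$, so it is all of $V(K_{m,n})$, with $k=m+n$. Conversely, for $k=m+n$ the whole vertex set has $a=m\ge2$ and $b=n\ge2$, so it lies in both families. This uses $m,n\ge2$. Hence $\lvert N_1\cap N_2\rvert=\mathbf{1}_{k=m+n}$, and substituting gives the formula.

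The only delicate point is this overlap term. We must confirm that the two failure conditions can coincide only for the full vertex set, and that the hypothesis $m,n\ge2$ guarantees this overlap is actually counted twice. We also check that the indicator ranges are correct at the boundaries $k-n\in\{0,1\}$. These boundary values correspond to sets with $a\le1$, which remain feasible and are correctly excluded from the subtraction.
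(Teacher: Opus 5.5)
Your proof is correct and takes the same route as the paper. Both arguments write the infeasible $k$-subsets as the union of two families (all of $B$ plus at least two vertices of $A$, and symmetrically), count each with the indicator-weighted binomial, and apply inclusion--exclusion. The overlap is only $V(K_{m,n})$ when $k=m+n$; your explicit check that $m,n\ge2$ places this set in both families is a detail the paper leaves implicit.
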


\begin{proof}
Fix $k$. By Theorem~\ref{thm:Kmn-characterization}, the nonfeasible $k$-subsets are the union of two families: $\mathcal F_B$, consisting of those containing $B$ and at least two vertices of $A$, and $\mathcal F_A$, defined symmetrically.

A set in $\mathcal F_B$ is obtained by choosing $k-n$ vertices from $A$, provided $2 \le k-n \le m$.
Thus, $\lvert \mathcal F_B \rvert = \mathbf{1}_{2\le k-n\le m} \binom{m}{k-n}$.
Similarly, $\lvert \mathcal F_A \rvert = \mathbf{1}_{2\le k-m\le n} \binom{n}{k-m}$.

The intersection contains only $A\cup B$ when $k=m+n$ and is empty otherwise.
Hence $\lvert \mathcal F_A \cap \mathcal F_B \rvert = \mathbf{1}_{k=m+n}$.
Applying inclusion-exclusion to the $\binom{m+n}{k}$ subsets gives the formula.
\end{proof}

For the remainder of this section, assume that $m,n \ge 3$.  
Then Theorem~\ref{thm:cited-bipartite-mu} gives $\mu(K_{m,n})=m+n-2$.
In particular, the number of maximum mutual-visibility sets is
\begin{displaymath}
    \left\lvert \M_{m+n-2}(K_{m,n}) \right\rvert =
    \begin{cases}
        mn,   & \text{if $m,n \ge 4$,}\\
        3n+3, & \text{if $m = 3$ and \(m < n\),}\\
        3m+3, & \text{if $n = 3$ and \(n < m\),}\\
        15,   & \text{if $m = n = 3$.}
    \end{cases}
\end{displaymath}

For a fixed integer $0 \le k \le \mu(K_{m,n})$, consider the integer interval $J_k = \left[ \max \{0,k-n\}, \min \{m,k\} \right]$.
A $k$-element subset $S \subseteq V(K_{m,n})$ of type $(a,b)$ satisfies $b=k-a$.
The bounds $0\le a\le m$ and $0\le k-a\le n$ show that $J_k$ is precisely the interval of values of a allowed by the cardinality constraints, before imposing mutual visibility.

By Theorem~\ref{thm:Kmn-characterization}, such a set fails to be a mutual-visibility set precisely when $b=n$ and $a \ge 2$, or when $a=m$ and $b \ge 2$.
The first case excludes the lower endpoint $a=k-n$ when $k-n\ge2$.
The second excludes the upper endpoint $a=m$ when $k-m\ge2$.
No other values are excluded.
Therefore, the feasible values form an integer interval, which we denote by $I_k$.
Thus, $I_k = \left\{ \lvert S \cap A \rvert \colon S \in \M_k(K_{m,n}) \right\}$. Every $i\in I_k$ is realized by a mutual-visibility set: choose $X\subseteq A$ and $Y\subseteq B$ with $\lvert X\rvert=i$ and $\lvert Y\rvert=k-i$. The bounds of $J_k$ guarantee these choices. The type $(i,k-i)$ avoids the excluded endpoints, so Theorem~\ref{thm:Kmn-characterization} gives $X\cup Y\in\M_k(K_{m,n})$. 

For $i\in I_k$, the configurations with $\lvert S\cap A\rvert=i$ form level $i$ of $\R^k(K_{m,n})$.
Every configuration at level $i$ is of type $(i,k-i)$, and the construction above shows that this level is nonempty.
For two consecutive levels $q$ and $q+1$, we call $q$ the lower level and $q+1$ the upper level.

\begin{lemma}[Two consecutive levels]
\label{lem:two-levels}
Let $m,n\ge3$ and $0\le k\le m+n-2$.
Suppose that $q,q+1\in I_k$.
The subgraph of $\R^k(K_{m,n})$ induced by the configurations at levels $q$ and $q+1$ is connected.
\end{lemma}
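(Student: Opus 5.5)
The plan has two parts. First, each level is internally "almost" connected through TS moves that stay inside the level. Second, the two levels are glued together by cross-level slides. The basic observation is that in $K_{m,n}$ every TS move slides a token from one class to the other, since every edge joins $A$ and $B$. Consequently a TS move always changes the level by exactly one, and there are no TS edges inside a single level. So the induced subgraph on levels $q$ and $q+1$ is bipartite. Its edges are exactly the pairs $S$ at level $q$, $T$ at level $q+1$ with $T=(S\setminus\{y\})\cup\{x\}$, where $y\in S\cap B$ and $x\in A\setminus S$. Both endpoints are feasible because $q,q+1\in I_k$. Every such pair of a removable $y$ and an absent $x$ gives an edge, because feasibility depends only on type (Theorem~\ref{thm:Kmn-characterization}).

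Write a configuration at level $q$ as $(X,Y)$ with $X\subseteq A$, $\lvert X\rvert=q$, and $Y\subseteq B$, $\lvert Y\rvert=k-q$. Such moves exist from level $q$ because $q<m$ and $k-q\ge1$, both of which follow from $q+1\in I_k$. Likewise, moves down from level $q+1$ exist because $q+1\ge1$ and $k-q-1<n$.

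Next I show that any two configurations at level $q$ are joined by a path alternating between the levels. It suffices to handle two elementary changes, since repeated single-element swaps connect any two $q$-subsets of $A$ and any two $(k-q)$-subsets of $B$.

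\emph{Swapping in $B$:} replace $Y$ by $Y-y+y'$ with $y'\notin Y$. Pick any $x\in A\setminus X$. Slide $y\to x$ to reach level $q+1$, giving $(X+x,\,Y-y)$. Then slide $x\to y'$ back down to $(X,\,Y-y+y')$. The vertex $y'$ is unoccupied in the intermediate configuration, so the move is valid.

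\emph{Swapping in $A$:} replace $X$ by $X-x+x'$ with $x'\notin X$. Pick any $y\in Y$. Slide $y\to x'$ to get $(X+x',\,Y-y)$, then slide $x\to y$ to get $(X-x+x',\,Y)$.

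All intermediate configurations lie at levels $q$ or $q+1$, so they are feasible. Hence level $q$ lies in a single component. Every configuration at level $q+1$ has a neighbor at level $q$ by the existence argument above, so the whole induced subgraph is connected.

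The main obstacle is the bookkeeping of degenerate cases: $k-q=1$, $q=0$, or $k-q-1=0$ at the upper level. The step I expect to be hardest is checking the needed vertices exist in each move. For the $A$-swap I need $Y\neq\emptyset$, which holds since $k-q\ge1$. For the $B$-swap I need $X\neq A$, which holds since $q<m$. Swaps are only needed when the relevant class is nontrivial; for example, if $Y$ is all of $B$ there is nothing to swap there. So these conditions always hold, and no further case analysis beyond $q,q+1\in I_k$ should be required.
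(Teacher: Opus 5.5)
Your proposal is correct and follows essentially the same route as the paper. Both arguments realize each within-part exchange at level $q$ by two slides through level $q+1$, using $A\setminus X\neq\emptyset$ and $Y\neq\emptyset$, and then attach every level-$(q+1)$ configuration to level $q$ by sliding its $A$-token into $B$. Your added remark that TS moves always change the level by one, so the induced subgraph is bipartite, is true but not needed.
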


\begin{proof}
For a configuration $S$ at level $q$, write $S = X \cup Y$, where $X = S \cap A$ and $Y = S \cap B$.
Since $q,q+1\in I_k$, we have $q<m$ and $1 \le k-q \le n$.
Thus, both $A \setminus X$ and $Y$ are nonempty.

To replace $x \in X$ by $x' \in A \setminus X$, choose $y \in Y$ and slide first from $y$ to $x'$ and then from $x$ to $y$.
Likewise, to replace $y\in Y$ by $y'\in B\setminus Y$, choose $x' \in A \setminus X$ and first slide the token at $y$ to $x'$, and then slide the token at $x'$ to $y'$.
Each intermediate configuration is at level $q+1$, and each final configuration is at level $q$.
Both levels are feasible, and all destinations are unoccupied, so the slides are admissible.

Configurations at level $q$ have equal cardinalities in each part, so successive within-part exchanges transform any one into any other.
Hence any two configurations at level $q$ can be connected by a walk using only levels $q$ and $q+1$.

Finally, every configuration at level $q+1$ contains a token in $A$ and has $k-q-1<n$ tokens in $B$.
Sliding one of these tokens to an unoccupied vertex of $B$ produces a configuration at level $q$.
Hence, the subgraph induced by the two levels is connected.
\end{proof}

\begin{theorem}[Connectivity of the layers]
\label{thm:Kmn-connectivity}
Let $m,n \ge 3$ and $0 \le k \le m+n-2$.
If $k < m+n-2$, then $\R^k(K_{m,n})$ is connected.
The maximum layer $\R^{m+n-2}(K_{m,n})$ is connected if and only if $\min\{m,n\}=3$.
If $m,n \ge 4$, this layer consists of $mn$ isolated vertices.
\end{theorem}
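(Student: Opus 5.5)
The plan is to reduce connectivity of $\R^k(K_{m,n})$ to the structure of the feasible interval $I_k$, using Lemma~\ref{lem:two-levels} as the gluing step. Every configuration of cardinality $k$ lies at exactly one level $i\in I_k$, and $I_k$ is an integer interval. Whenever $|I_k|\ge2$, the union over consecutive pairs $q,q+1\in I_k$ of the connected subgraphs given by Lemma~\ref{lem:two-levels} covers every level. Consecutive pairs share a level, so the whole layer is connected. The work is therefore to determine, for each $k$, whether $|I_k|\ge 2$, and to handle the case $|I_k|=1$ directly.

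For $k<m+n-2$, I would show $|I_k|\ge2$ or else treat the degenerate cases by hand. Recall $J_k=[\max\{0,k-n\},\min\{m,k\}]$, and $I_k$ is obtained by deleting the lower endpoint $k-n$ when $k-n\ge2$ and the upper endpoint $m$ when $k-m\ge2$.
\begin{itemize}
\item For $k\le 1$, the layer is $K_1$ or $G$, which is connected, so these cases are settled by the preliminary remarks.
\item For $2\le k<m+n-2$, the length of $J_k$ is $\min\{m,k\}-\max\{0,k-n\}$, and it is at least $2$ in every case. The case $\min\{m,k\}=k$ together with $\max\{0,k-n\}=0$ gives length $k\ge2$. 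In the other cases the length is $m$, $n$, or $m+n-k\ge3$.
\end{itemize}
If $J_k$ has length $\ell$ (so $\ell+1$ points), removing at most two endpoints leaves at least $\ell-1$ points. This is at least $2$ when $\ell\ge3$. The only subcase needing care is $\ell=2$, which forces $k=2$. There both removal conditions fail, since $k-n\le -1$ and $k-m\le -1$, so $I_2=J_2$ has three points. Hence $|I_k|\ge2$, and the layer is connected.

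For the maximum layer $k=m+n-2$, we have $J_k=[m-2,m]$. The endpoint $m-2=k-n$ is removed because $m-2\ge2$ when $m\ge 4$, and likewise $m$ is removed when $n\ge4$. If $m,n\ge4$, then $I_k=\{m-1\}$: every configuration has type $(m-1,n-1)$, there are $mn$ of them, and the only candidate slide moves a token across the partition. That slide produces type $(m-2,n)$ or $(m,n-2)$, which is infeasible by Theorem~\ref{thm:Kmn-characterization}. Within-part slides are impossible since $K_{m,n}$ has no edges inside a part. So the layer consists of $mn$ isolated vertices and is disconnected.

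If $\min\{m,n\}=3$, say $m=3$, then $k-n=1<2$, so the lower endpoint $m-2=1$ survives and $I_k\supseteq\{1,2\}$. The layer is then connected by the gluing argument. The only point to check is the hypothesis $q,q+1\in I_k$ with $q=1$, which holds. This also matches the counts $3n+3$ and $15$.

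The main obstacle is the bookkeeping of the endpoint deletions in the small cases. Conceptually the key step is the isolation argument for $m,n\ge4$, which rests on the fact that every TS edge in $K_{m,n}$ changes the level by exactly one.
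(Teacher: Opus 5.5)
Your proposal is correct and takes essentially the same route as the paper. Both arguments show $|I_k|\ge 2$ for $k<m+n-2$, since only endpoints of $J_k$ are ever deleted, glue consecutive levels with Lemma~\ref{lem:two-levels}, and read off the maximum layer from $J_{m+n-2}=[m-2,m]$; the only difference is cosmetic, in that you dispatch $k\le 1$ by the preliminary remarks and single out $k=2$, whereas the paper uses $I_k=J_k=[0,k]$ for $k\le 2$ and $|J_k|=\min\{k,m,n,m+n-k\}+1\ge 4$ for $k\ge 3$.
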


\begin{proof}
The layer $\R^0(K_{m,n})$ has one vertex.
Let $1\le k\le m+n-3$.
Recall that $I_k$ is obtained from $J_k$ by imposing mutual visibility.

We first show that $I_k$ has at least two elements.
If $k \le 2$, neither exclusion condition $k-n \ge 2$ nor $k-m \ge 2$ holds, so $I_k = J_k = [0,k]$.
If $k \ge 3$, then $\lvert J_k \rvert = \min \{m,k\}-\max \{0,k-n\}+1 = \min \{k,m,n,m+n-k\}+1 \ge 4$.
Only the lower endpoint with $k-n\ge2$ and the upper with $k-m\ge2$ are excluded.
Thus, at most two elements are removed, so at least two consecutive feasible values remain.

By Lemma~\ref{lem:two-levels}, every pair of consecutive levels in $I_k$ induces a connected subgraph.
Successive such subgraphs share a nonempty level.
Since $I_k$ is an integer interval, their union is the whole layer $\R^k(K_{m,n})$, which is therefore connected.

Now let $k=m+n-2$.
If $m,n\ge 4$, Theorem~\ref{thm:Kmn-characterization} implies that the only feasible type is $(m-1,n-1)$.
A token slide would produce type $(m-2,n)$ or $(m,n-2)$, neither of which is feasible.
Hence, no admissible token slide is possible.
There are $mn$ such configurations, one for every choice of an omitted vertex from each part.
Thus, the maximum layer consists of $mn$ isolated vertices.

Finally, suppose that $\min \{m,n\} = 3$.
By symmetry, assume $m=3$.
If $n>3$, the feasible types are $(1,n)$ and $(2,n-1)$, so $I_k = \{1,2\}$.
If $n=3$, the feasible types are $(1,3)$, $(2,2)$ and $(3,1)$, so $I_k=\{1,2,3\}$.
In both cases, Lemma~\ref{lem:two-levels} connects the consecutive levels and hence the entire maximum layer.
\end{proof}

Lemma~\ref{lem:two-levels} also bounds distances in connected layers using TS moves.

\begin{proposition}[Distance bound within a connected layer]
\label{prop:bipartite-distance}
Let $m,n \ge 3$ and $0 \le k \le m+n-2$.
Assume that either $k < m+n-2$, or $k = m+n-2$ and $\min\{m,n\} = 3$.
Then, for all $S,T \in \M_k(K_{m,n})$, we have $d_{\R^k(K_{m,n})}(S,T) \le \lvert S \triangle T \rvert - \lvert a_S - a_T \rvert$, where $a_S = \lvert S \cap A \rvert$ and $a_T = \lvert T \cap A \rvert$.
In particular, $d_{\R^k(K_{m,n})} (S,T) \le \lvert S \triangle T \rvert$.
Consequently, $\diam \left(\R^k(K_{m,n})\right) \le 2 \min \{k,m+n-k\}$.
\end{proposition}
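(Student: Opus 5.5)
The plan is to build an explicit walk in $\R^k(K_{m,n})$ from $S$ to $T$ in two phases: first equalize the levels using cross-part slides, then fix the remaining differences inside each part using the two-slide exchanges from the proof of Lemma~\ref{lem:two-levels}. Write $S=X\cup Y$ and $T=X'\cup Y'$ with $X,X'\subseteq A$ and $Y,Y'\subseteq B$. By symmetry between $S$ and $T$, assume $a_S\le a_T$ and put $d=a_T-a_S$. Let $p=\lvert X\setminus X'\rvert$ and $r=\lvert Y'\setminus Y\rvert$. Since $\lvert X\rvert=a_S$, $\lvert X'\rvert=a_T$ and $\lvert S\rvert=\lvert T\rvert=k$, we get $\lvert X'\setminus X\rvert=p+d$ and $\lvert Y\setminus Y'\rvert=r+d$. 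Hence $\lvert S\triangle T\rvert=2p+2r+2d$, and the target bound reads $2(p+r)+d$.

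\emph{Phase 1 (cost $d$).} Pair $d$ vertices $y_1,\dots,y_d$ of $Y\setminus Y'$ with $d$ vertices $x_1,\dots,x_d$ of $X'\setminus X$. For each $j$, slide the token from $y_j$ to $x_j$. This is a single TS move because $y_jx_j\in E(K_{m,n})$, and $x_j$ is unoccupied since it lies outside $S$ and has not yet received a token. Each such slide raises the level by one, so the intermediate levels run through $a_S,a_S+1,\dots,a_T$. Since $a_S,a_T\in I_k$ and $I_k$ is an integer interval, all these levels are feasible. The result is a configuration $S'$ at level $a_T$ with $\lvert S'\triangle T\rvert=2(p+r)$: it differs from $T$ by $p$ swaps inside $A$ and $r$ swaps inside $B$.

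\emph{Phase 2 (cost $2(p+r)$).} Let $q=a_T$. Under the hypotheses, Theorem~\ref{thm:Kmn-connectivity} applies, and its proof gives $\lvert I_k\rvert\ge2$ (in the maximum layer with $\min\{m,n\}=3$ we have $I_k=\{1,2\}$ or $\{1,2,3\}$). Hence $q+1\in I_k$ or $q-1\in I_k$. If $q+1\in I_k$, each within-part swap is performed by the two-slide routine of Lemma~\ref{lem:two-levels}, passing through level $q+1$. If only $q-1\in I_k$, use the mirror routine with the roles of $A$ and $B$ exchanged (equivalently, apply the lemma to levels $q-1,q$ viewed from $B$), passing through level $q-1$. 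In either case each swap costs two moves and all intermediate configurations are feasible. The one point I expect to need care is checking that the nonemptiness conditions used in that routine ($Y\neq\emptyset$ and $A\setminus X\ne\emptyset$, or their mirrors) follow from the adjacent level being in $I_k$. This is exactly as in the lemma's proof: if $q+1\in I_k$ then $q<m$ and $k-q\ge1$, and dually for $q-1$.

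Summing the two phases gives $d_{\R^k}(S,T)\le d+2(p+r)=\lvert S\triangle T\rvert-\lvert a_S-a_T\rvert$, and the weaker bound $d_{\R^k}(S,T)\le\lvert S\triangle T\rvert$ follows immediately. For the diameter, note that $\lvert S\triangle T\rvert=2\lvert S\setminus T\rvert$ for sets of equal size. Moreover $\lvert S\setminus T\rvert\le\min\{\lvert S\rvert,\lvert V\setminus T\rvert\}=\min\{k,m+n-k\}$, which yields $\diam(\R^k(K_{m,n}))\le2\min\{k,m+n-k\}$.
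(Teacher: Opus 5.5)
Your proposal is correct and follows essentially the same route as the paper. Both use $\lvert a_S-a_T\rvert$ cross-part slides whose intermediate levels stay in the interval $I_k$, then two-slide within-part exchanges through an adjacent feasible level as in Lemma~\ref{lem:two-levels}. Your explicit mirror routine for the case where only $q-1\in I_k$ spells out a symmetry that the paper's proof uses implicitly.

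The only slip is that $\lvert I_k\rvert\ge2$ fails for $k=0$. There $S=T=\emptyset$, so Phase~2 is vacuous; this is why the paper phrases that step as ``if discrepancies remain''.
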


\begin{proof}
The assertion is immediate when $S=T$
Set $h=\lvert S\triangle T\rvert/2$ and $d=\lvert a_S-a_T\rvert$.
Perform $d$ TS moves.
At each step, if the current configuration has type $(a,k-a)$ with $a>a_T$, move a token from a selected vertex in $A\setminus T$ to an unoccupied vertex of $T\cap B$; when $a<a_T$, interchange $A$ and $B$.
The required vertices exist because the current configuration has more selected vertices than $T$ in the source part and fewer in the destination part.
Each move changes the first coordinate by one toward $a_T$, keeping it between $a_S$ and $a_T$.
Since $I_k$ is an interval, every intermediate configuration is feasible.

After $d$ slides, the type is $(a_T,k-a_T)$, leaving only within-part exchanges.
If discrepancies remain, Theorem~\ref{thm:Kmn-connectivity} and its proof give $\lvert I_k\rvert\ge2$.
Since this integer interval has at least two elements, each feasible value has an adjacent feasible value, so the current level has an adjacent feasible level.
Lemma~\ref{lem:two-levels} realizes each exchange in two slides through that level.
The remaining $h-d$ exchanges give total length at most $d+2(h-d) = 2h-d = \lvert S \triangle T \rvert - d$.
Finally, two $k$-subsets of an $(m+n)$-set differ in at most $2 \min\{k,m+n-k\}$.
\end{proof}

The following corollary recovers the case $m,n\ge3$ of the formula for
complete bipartite graphs proved by \citet[Proposition~3.6~(i)]{DettlaffEtAl2025}.
We give a proof from layer connectivity and the component criterion.

\begin{corollary}[\citealp{DettlaffEtAl2025}]
\label{cor:Kmn-mobile}
For $m,n \ge 3$, we have
\begin{displaymath}
\Mob(K_{m,n}) =
\begin{cases}
    m+n-2, & \text{if $\min \{m,n\} = 3$},\\
    m+n-3, & \text{if $m,n \ge 4$}.
\end{cases}
\end{displaymath}
\end{corollary}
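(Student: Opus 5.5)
The plan is to combine Theorem~\ref{thm:Kmn-connectivity} with the component criterion, via Corollary~\ref{cor:connected-layer-mobile}. Recall that $\Mob(G)\le\mu(G)=m+n-2$ by Theorem~\ref{thm:cited-bipartite-mu}. Given this bound, it suffices to decide, for $k=m+n-2$ and, if necessary, for $k=m+n-3$, whether some configuration of that cardinality is mobile.

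First take $\min\{m,n\}=3$. Theorem~\ref{thm:Kmn-connectivity} says that the maximum layer $\R^{m+n-2}(K_{m,n})$ is connected. By Corollary~\ref{cor:connected-layer-mobile}, it then remains only to check that every vertex of $K_{m,n}$ lies in some member of $\M_{m+n-2}(K_{m,n})$. For this I would use the type $(m-1,n-1)$, which is feasible by Theorem~\ref{thm:Kmn-characterization} because both classes are incomplete. Its members omit exactly one vertex of each class, and for any prescribed vertex we may choose the omitted vertices elsewhere, since each class has at least three vertices. Hence $\Mob(K_{m,n})=m+n-2$.

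Now take $m,n\ge4$. The same theorem says the maximum layer consists of $mn$ isolated vertices. So the component $\mathcal C_{m+n-2}(S)$ of any $S$ is $\{S\}$, and its union is $S\ne V(K_{m,n})$. By Proposition~\ref{prop:mobile-component}, no configuration of cardinality $m+n-2$ is mobile, so $\Mob(K_{m,n})\le m+n-3$.

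For the matching lower bound, the layer $\R^{m+n-3}(K_{m,n})$ is connected by Theorem~\ref{thm:Kmn-connectivity}, since $m+n-3<m+n-2$. Its configurations cover $V(K_{m,n})$: each is a subset of a set of type $(m-1,n-1)$, already covering, with one token removed. Alternatively, one can take the types $(m-2,n-1)$ and $(m-1,n-2)$, which are feasible because both classes stay incomplete and contain every prescribed vertex by the same choice argument. Corollary~\ref{cor:connected-layer-mobile} then shows that every member is mobile, giving $\Mob(K_{m,n})=m+n-3$.

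I see no real obstacle, since the work is done by Theorem~\ref{thm:Kmn-connectivity}. The only care needed is the covering check, together with the remark that mobility of some set of cardinality $k$ is exactly what the definition of $\Mob$ requires.
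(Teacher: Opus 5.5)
Your proof is correct and follows the paper's argument. Both combine Theorem~\ref{thm:Kmn-connectivity} with Corollary~\ref{cor:connected-layer-mobile}, use the isolated maximum layer to rule out $m+n-2$ when $m,n\ge4$, and check covering of the $(m+n-3)$-layer with explicit types such as $(m-1,n-2)$.

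One small slip: not every configuration of cardinality $m+n-3$ lies inside a set of type $(m-1,n-1)$. For $m=4$, type $(1,n)$ is feasible at this cardinality. So your first covering remark should instead say that the sets obtained by deleting one token from type-$(m-1,n-1)$ sets already cover $V(K_{m,n})$. Your alternative argument via types $(m-2,n-1)$ and $(m-1,n-2)$ is complete anyway.
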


\begin{proof}
Assume first that $\min\{m,n\} = 3$.
The maximum layer is connected, and its configurations together cover both partite classes.
It follows from Corollary~\ref{cor:connected-layer-mobile} that $\Mob(K_{m,n})=\mu(K_{m,n})=m+n-2$.

Now assume that $m,n\ge  4$.
Every maximum configuration is isolated and omits one vertex from each partite class; hence, no maximum set is mobile.
On the other hand, the layer of size $m+n-3$ is connected.  
Its configurations cover $V(K_{m,n})$: every vertex belongs, for example, to a configuration of type $(m-1,n-2)$.
By Corollary~\ref{cor:connected-layer-mobile}, these configurations are mobile.
Consequently, $\Mob(K_{m,n}) = m+n-3$.
\end{proof}

The preceding characterization also yields formulas for two other mutual-visibility parameters.
For comparison, we first recall the mutual-visibility polynomial in the balanced case.

\begin{proposition}[\citealp{BujtasKlavzarTian2025}]
\label{prop:cited-balanced-polynomial}
For $n\ge3$, the complete bipartite graph $K_{n,n}$ has visibility polynomial $P_{K_{n,n}}(z)=\big((1+z)^n-z^n\big)^2+2nz^{n+1}+2z^n$.
\end{proposition}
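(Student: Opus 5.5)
This proposition is cited from \citet{BujtasKlavzarTian2025}, but it can also be recovered from Corollary~\ref{cor:Kmn-count} with $m=n$. The plan is to write $P_{K_{n,n}}(z)=\sum_k r_k z^k$ and compare it with the right-hand side coefficient by coefficient. Equivalently, we build the generating function directly from Theorem~\ref{thm:Kmn-characterization}: a set of type $(a,b)$ contributes $\binom{n}{a}\binom{n}{b}z^{a+b}$, and it is feasible unless ($b=n$ and $a\ge2$) or ($a=n$ and $b\ge2$).

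The first step expands the square. We have $(1+z)^n-z^n=\sum_{a=0}^{n-1}\binom{n}{a}z^a$, the generating function of subsets of one class that are not full. Its square therefore enumerates all sets of type $(a,b)$ with $a,b\le n-1$, and all of these are feasible.

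The second step adds the types in which at least one class is full. Feasibility forces the other coordinate to be at most $1$, leaving the types $(n,0)$, $(0,n)$, $(n,1)$, $(1,n)$ and $(n,n)$; the last is excluded because $n\ge3>1$.
\begin{itemize}
\item $(n,0)$ and $(0,n)$ contribute $2z^n$.
\item $(n,1)$ and $(1,n)$ contribute $2nz^{n+1}$.
\end{itemize}
These five types are pairwise distinct and disjoint from the first family, so summing gives the stated polynomial.

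The only point needing care is the bookkeeping of the boundary types. One must check that no type with a full class is double-counted, and that the exclusions $2\le k-n\le n$ in Corollary~\ref{cor:Kmn-count} match exactly the omitted types $(n,b)$ and $(b,n)$ with $b\ge2$. The $\mathbf{1}_{k=m+n}$ correction in that corollary corresponds to the single type $(n,n)$, which lies in both excluded families, so inclusion–exclusion counts its removal correctly.
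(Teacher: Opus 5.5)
Your proof is correct, but it organizes the count differently from the paper. The paper does not reprove the cited formula. It obtains it as the case $m=n$ of Corollary~\ref{cor:visibility-polynomial-complete-bipartite}, which in turn comes from the complementary count in Corollary~\ref{cor:Kmn-count}: all $\binom{2n}{k}$ subsets, minus the two infeasible families, plus the $\mathbf{1}_{k=m+n}$ correction. That route yields $(1+z)^{2n}-2z^n\big((1+z)^n-1-nz\big)+z^{2n}$. One must then still expand $\big((1+z)^n-z^n\big)^2$ to see that this equals the stated polynomial.

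You instead partition the feasible sets positively, into those with both classes non-full and those with exactly one class full. The square then appears directly as the generating function of the first family. The boundary terms $2z^n+2nz^{n+1}$ are read off from Theorem~\ref{thm:Kmn-characterization}, with no inclusion--exclusion and no algebraic identity to verify.

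The paper's complement count treats arbitrary $m,n\ge2$ uniformly, but your partition is just as general. The same bookkeeping gives
$\big((1+z)^m-z^m\big)\big((1+z)^n-z^n\big)+z^m+z^n+nz^{m+1}+mz^{n+1}$, which expands to the formula of Corollary~\ref{cor:visibility-polynomial-complete-bipartite}. It also explains why the cited balanced formula has its product shape. Your closing consistency check against Corollary~\ref{cor:Kmn-count} is therefore optional rather than a required step.
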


We now extend this polynomial formula to arbitrary $m,n\ge2$.

\begin{corollary}[Visibility polynomial of $K_{m,n}$]
\label{cor:visibility-polynomial-complete-bipartite}
Let $m,n\ge2$. The visibility polynomial of $K_{m,n}$ is
$\displaystyle P_{K_{m,n}}(z)=(1+z)^{m+n}
-z^n\big((1+z)^m-1-mz\big)
-z^m\big((1+z)^n-1-nz\big)
+z^{m+n}$.
\end{corollary}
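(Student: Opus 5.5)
The plan is to obtain $P_{K_{m,n}}(z)=\sum_k r_k z^k$ by summing the enumeration formula of Corollary~\ref{cor:Kmn-count} term by term. I will treat separately the four contributions: the binomial term, the two correction terms, and the indicator $\mathbf{1}_{k=m+n}$. The identity is between polynomials, so it suffices to match coefficients; equivalently, we may sum over all $0\le k\le m+n$, since $r_k=0$ for $k>\mu$ is automatic from the formula.

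The first term gives $\sum_{k=0}^{m+n}\binom{m+n}{k}z^k=(1+z)^{m+n}$. For the family $\mathcal F_B$, I will substitute $j=k-n$. The sum becomes
\begin{displaymath}
\sum_{j=2}^{m}\binom{m}{j}z^{n+j}=z^n\big((1+z)^m-1-mz\big),
\end{displaymath}
because removing the $j=0$ and $j=1$ terms from the binomial expansion leaves exactly this expression. Symmetrically, with $j=k-m$, the family $\mathcal F_A$ contributes $z^m\big((1+z)^n-1-nz\big)$. The final indicator contributes $z^{m+n}$.

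Combining these with the signs from Corollary~\ref{cor:Kmn-count} gives the stated formula. The one point to check is that the range $0\le k\le m+n$ in that corollary covers every $k$ for which the indicators are nonzero. It does, since $k-n\le m$ and $k-m\le n$ force $k\le m+n$. I do not expect a real obstacle; the only care needed is the index bookkeeping for the removed terms $j\in\{0,1\}$.

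As a sanity check, for $m=n$ the formula should reduce to Proposition~\ref{prop:cited-balanced-polynomial}. One can optionally verify this by expanding $\big((1+z)^n-z^n\big)^2$.
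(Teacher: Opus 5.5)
Your proposal is correct and follows the paper's proof: you multiply the count from Corollary~\ref{cor:Kmn-count} by $z^k$, sum over $0\le k\le m+n$, and reindex the correction terms with $j=k-n$ (respectively $j=k-m$) to get $z^n\big((1+z)^m-1-mz\big)$ and its mirror term, while the intersection indicator contributes $z^{m+n}$.
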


\begin{proof}
Multiply the formula in Corollary~\ref{cor:Kmn-count} by $z^k$ and sum over $0 \le k \le m+n$.
The first term sums to $(1+z)^{m+n}$.
For the first correction term, the binomial theorem gives $\sum_{k=n+2}^{m+n} \binom{m}{k-n} z^k = z^n \sum_{j=2}^{m} \binom{m}{j} z^j = z^n ((1+z)^m-1-mz)$.
The second correction term follows by interchanging $m$ and $n$, and the intersection term contributes $z^{m+n}$.
This proves the formula.
\end{proof}

For $m=n\ge3$, Corollary~\ref{cor:visibility-polynomial-complete-bipartite} simplifies to Proposition~\ref{prop:cited-balanced-polynomial}.
\citet{BujtasKlavzarTian2025} observe that the general complete bipartite case can be handled similarly, but do not state an explicit formula.
Corollary~\ref{cor:visibility-polynomial-complete-bipartite} makes the general formula explicit.

Recall that the lower mutual-visibility number $\mu^{-}(G)$ is the minimum cardinality of a maximal mutual-visibility set of $G$ \citep{BresarYero2024}.

\begin{corollary}[\citealp{BresarYero2024}, Proposition~6]
\label{cor:lower-mutual-visibility-complete-bipartite}
Let $m,n \ge 3$. Then, we have $\mu^{-}(K_{m,n}) = \min \{m,n\}+1$.
\end{corollary}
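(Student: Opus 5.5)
The plan is to read off $\mu^{-}(K_{m,n})$ directly from Corollary~\ref{cor:Kmn-maximal}. That corollary states that, for $m,n\ge3$, the maximal mutual-visibility sets of $K_{m,n}$ are exactly the sets of type $(m-1,n-1)$, $(m,1)$ or $(1,n)$. Since $\mu^{-}$ is the minimum cardinality of a maximal set, it suffices to minimize the cardinality over these three types, after checking that each type is realized by an actual mutual-visibility set.

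\emph{Step 1: realizability.} By Theorem~\ref{thm:Kmn-characterization}, each of the three types gives a mutual-visibility set.
\begin{itemize}
\item Type $(m-1,n-1)$: we have $a=m-1<m$ and $b=n-1<n$, so both conditions hold.
\item Type $(m,1)$: condition (i) holds because $b=1<n$, and condition (ii) holds because $b\le1$.
\item Type $(1,n)$: this case is symmetric to $(m,1)$.
\end{itemize}
Sets of each type exist because we may choose any $m-1$ of the $m$ vertices of $A$, and similarly in $B$. By Corollary~\ref{cor:Kmn-maximal}, all of them are maximal.

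\emph{Step 2: cardinalities.} The three types have cardinalities $m+n-2$, $m+1$ and $n+1$. The minimum is
\[
\min\{m+n-2,\;m+1,\;n+1\}.
\]
Assume without loss of generality that $m\le n$, so $m+1\le n+1$. Since $n\ge3$, we get $m+n-2\ge m+1$. Hence the minimum is $m+1=\min\{m,n\}+1$, attained by a set of type $(m,1)$.

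\emph{Main obstacle.} All the structural work is already done in Corollary~\ref{cor:Kmn-maximal}, so no step is a real obstacle. The only point needing care is the comparison $m+n-2\ge\min\{m,n\}+1$. It is exactly here that the hypothesis $\max\{m,n\}\ge3$ is used; the hypothesis $m,n\ge3$ in the corollary itself is what guarantees that no other maximal types occur.
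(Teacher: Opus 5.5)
Your proposal is correct and matches the paper's proof: both read off the three maximal types $(m-1,n-1)$, $(m,1)$ and $(1,n)$ from Corollary~\ref{cor:Kmn-maximal} and minimize their cardinalities $m+n-2$, $m+1$ and $n+1$ using $m,n\ge3$. Your explicit realizability check via Theorem~\ref{thm:Kmn-characterization} is a harmless addition that the paper leaves implicit.
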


\begin{proof}
Corollary~\ref{cor:Kmn-maximal} gives the maximal types $(m-1,n-1)$, $(m,1)$ and $(1,n)$.
Their cardinalities are $m+n-2$, $m+1$ and $n+1$, respectively.
Since $m,n \ge 3$, the minimum of these three numbers is $\min \{m,n\}+1$.
\end{proof}

Theorem~\ref{thm:Kmn-connectivity} and Proposition~\ref{prop:bipartite-distance} give the main reconfiguration results for $K_{m,n}$.
The type characterization supplies their feasible levels, while the counting formulas complement earlier visibility results.
For trees, branching vertices can also disconnect intermediate layers.

\section{Trees}

Let $T$ be a tree with leaf set $L(T)$.
A \emph{branching vertex} has degree at least three.
For every nonempty set $S \subseteq V(T)$, let $H_T(S)$ be the unique minimal subtree of $T$ containing $S$.
We recall the following characterization, writing $H_T(S)$ for the Steiner subtree used in the source.

\begin{lemma}[\citealp{TonnyShikhi2026}]
\label{prop:tree-characterization}
Let $T$ be a tree and let $S \subseteq V(T)$ with $\lvert S \rvert \ge 2$.
Then $S$ is a mutual-visibility set of $T$ if and only if $S$ is precisely the leaf set $L(H_T(S))$.
\end{lemma}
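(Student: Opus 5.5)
We prove the two implications separately. Throughout we use the basic fact that in a tree every pair $u,v$ has a unique $uv$-path, which is therefore the unique $uv$-geodesic. Hence $S$ is a mutual-visibility set exactly when, for all distinct $u,v\in S$, the unique $uv$-path $P_{uv}$ contains no vertex of $S$ other than its endpoints. We also use the standard description of the Steiner subtree: $H_T(S)=\bigcup_{u,v\in S}P_{uv}$. Since $\lvert S\rvert\ge2$, the tree $H_T(S)$ has at least two vertices. Every leaf of $H_T(S)$ lies in $S$, for otherwise deleting that leaf would give a smaller subtree still containing $S$. Thus $L(H_T(S))\subseteq S$ always holds, and the statement reduces to showing that $S$ is a mutual-visibility set if and only if $S\subseteq L(H_T(S))$.

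For the forward direction, assume $S\in\M(T)$ and suppose some $w\in S$ is not a leaf of $H_T(S)$. Then $w$ has degree at least two in $H_T(S)$. Choose two distinct neighbours $x,y$ of $w$ in $H_T(S)$. Deleting $w$ splits $H_T(S)$ into components, and by the minimality of $H_T(S)$ each component contains a vertex of $S$. Otherwise that component could be removed, leaving a smaller subtree that still contains $S$. Take $u\in S$ in the component containing $x$ and $v\in S$ in the component containing $y$. The unique $uv$-path passes through $w$, and $w$ is an internal vertex since $u,v\ne w$. This contradicts mutual visibility, so every vertex of $S$ is a leaf of $H_T(S)$.

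For the converse, assume $S=L(H_T(S))$ and let $u,v\in S$ be distinct. The path $P_{uv}$ lies in $H_T(S)$. Every internal vertex of $P_{uv}$ has two neighbours on the path, so it has degree at least two in $H_T(S)$ and is not a leaf. Hence no internal vertex of $P_{uv}$ belongs to $S$, and $P_{uv}$ is a geodesic witnessing the visibility of $u$ and $v$. Therefore $S\in\M(T)$.

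The only step requiring care is the claim that each component of $H_T(S)-w$ meets $S$. We justify it by pruning: if some component $C$ avoided $S$, then $H_T(S)-V(C)$ would still be connected, since $C$ attaches to the rest of the tree only through the single edge to $w$. It would therefore be a proper subtree containing $S$, contradicting minimality. Everything else is routine.
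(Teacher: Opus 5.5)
The paper does not prove this lemma; it quotes it from \citet{TonnyShikhi2026} and uses it as a black box, so there is no in-paper argument to compare yours against. Your proof is correct and complete. The observation $L(H_T(S))\subseteq S$ reduces the claim to showing $S\subseteq L(H_T(S))$. The pruning argument, that every component of $H_T(S)-w$ meets $S$, gives the forward direction. The degree count on the internal vertices of $P_{uv}$ gives the converse.

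One step you leave implicit is that two distinct neighbours $x,y$ of $w$ lie in different components of $H_T(S)-w$. This holds because $H_T(S)$ is acyclic, and it is worth one clause. Your pruning claim is also the minimality fact the paper later invokes without comment: each branch of $H_T(S)$ at an active branching vertex contains a vertex of $S$. Your write-up therefore justifies that step as well.
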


We also recall the extremal value for trees.

\begin{corollary}[\citealp{DiStefano2022}]
\label{cor:cited-tree-mu}
Let $T$ be a tree with at least two vertices.
Then $L(T)$ is a mutual-visibility set and $\mu(T)=\lvert L(T)\rvert$.
\end{corollary}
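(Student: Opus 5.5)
The statement has two parts: $L(T)$ is a mutual-visibility set, and $\mu(T)=\lvert L(T)\rvert$. The plan is to derive both from Lemma~\ref{prop:tree-characterization}. In a tree, geodesics are unique paths, so that lemma settles all visibility questions.

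First, feasibility of $L(T)$. If $T$ has exactly two vertices, then $L(T)=V(T)$ has cardinality two and is trivially a mutual-visibility set. Otherwise $\lvert L(T)\rvert\ge2$, since every tree with at least two vertices has at least two leaves. I would show that $H_T(L(T))=T$. Suppose some vertex $v$ lies outside $H_T(L(T))$. Extend a maximal path through $v$; its ends are leaves of $T$, so $v$ lies on the path between two leaves and hence in $H_T(L(T))$, a contradiction. Therefore $L(H_T(L(T)))=L(T)$, and Lemma~\ref{prop:tree-characterization} gives $L(T)\in\M(T)$. Consequently $\mu(T)\ge\lvert L(T)\rvert$.

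Second, the upper bound. Let $S\in\M(T)$. If $\lvert S\rvert\le 1$, then $\lvert S\rvert\le\lvert L(T)\rvert$ because $\lvert L(T)\rvert\ge 2$. If $\lvert S\rvert\ge2$, Lemma~\ref{prop:tree-characterization} gives $S=L(H)$ with $H=H_T(S)$ a subtree of $T$. The main step is the injection $\lvert L(H)\rvert\le\lvert L(T)\rvert$ for a subtree $H$ with at least two vertices. For each leaf $\ell$ of $H$, let $e_\ell$ be the unique edge of $H$ at $\ell$. Walk from $\ell$ away from $e_\ell$, never re-entering $H$, until reaching a leaf $f(\ell)$ of $T$; if $\ell$ is already a leaf of $T$, set $f(\ell)=\ell$.

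I must check that $f$ is injective. The walk from $\ell$ leaves $H$ immediately, so it stays in the component of $T-(V(H)\setminus\{\ell\})$ containing $\ell$. These components are disjoint for distinct leaves $\ell$, because $H$ is connected and $T$ is a tree: a path joining two such components would create a second route between vertices of $H$. Hence $f$ is injective, and $\lvert S\rvert\le\lvert L(T)\rvert$. This gives $\mu(T)=\lvert L(T)\rvert$.

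I expect the only delicate point to be this disjointness-of-branches argument. I would phrase it via the unique-path property: if $f(\ell)=f(\ell')$, then the paths from $\ell$ and $\ell'$ to that leaf meet outside $H$, and together with the $\ell\ell'$-path in $H$ they form a cycle in $T$.
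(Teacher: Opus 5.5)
Your proof is correct. The paper gives no proof to compare it with: the statement is quoted from Di Stefano and only placed after Lemma~\ref{prop:tree-characterization}. Your derivation from that characterization is therefore a self-contained justification of the ``corollary'' label.

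Both halves hold up.
\begin{enumerate}
    \item $H_T(L(T))=T$, because every vertex lies on a leaf-to-leaf path.
    \item The map $f$ is well defined and injective. The first step leaves $H$ because subtrees of trees are induced. A non-backtracking walk in a tree cannot return to $H$ without closing a cycle. Finally, $f(\ell)=f(\ell')$ would produce a second $\ell\ell'$-path through a vertex outside $H$.
\end{enumerate}
Write these three facts out explicitly rather than building ``never re-entering $H$'' into the construction of the walk.

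The paper's own material offers a shortcut for your injection. The outward-sliding step in the proof of Theorem~\ref{thm:tree-connectivity} does not use the value of $\mu(T)$. It applies to every $S\in\M(T)$ with $\lvert S\rvert\ge 2$. If $s\in S$ is not a leaf of $T$, slide it to a neighbor outside $H_T(S)$. The result is again the leaf set of its Steiner tree, and that tree strictly grows. Iterating terminates at an $\lvert S\rvert$-subset of $L(T)$. Tokens move one at a time onto unoccupied vertices, so distinctness is automatic and the disjoint-branches argument becomes unnecessary.

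Your version buys something in return. It proves the purely structural inequality $\lvert L(H)\rvert\le\lvert L(T)\rvert$ for every subtree $H$ with at least two vertices. That inequality is independent of mutual visibility and of the reconfiguration dynamics.
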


Branching vertices can obstruct connectivity in intermediate layers.
Let $p$ be a vertex of degree $r \ge 3$, and fix an ordering $C_1,\ldots,C_r$ of the components of $T-p$.
We call these components the \emph{branches} of $T$ at $p$.
For $S\subseteq V(T)\setminus\{p\}$, define the \emph{occupancy vector} of $S$ at $p$ with respect to this ordering, as $\alpha_p(S) = \left( \lvert S \cap V(C_1) \rvert,\ldots,\lvert S \cap V(C_r) \rvert \right)$.
We say that the branch $C_i$ is \emph{occupied} by $S$ if $S \cap V(C_i) \neq \emptyset$.

The next lemma shows that $p$ acts as a barrier whenever at least three branches are occupied: $p$ stays unoccupied and the occupancy vector is invariant.

\begin{lemma}[Branch barrier]
\label{lem:branch-barrier}
Let $T$ be a tree, let $p$ be a branching vertex, and let $S\in\M_k(T)$ with $p\notin S$.
Suppose that at least three coordinates of $\alpha_p(S)$ are positive.
For $S' \in \M_k(T)$, if $S'$ belongs to the same connected component of $\R^k(T)$ as $S$, then $p \notin S'$ and $\alpha_p(S') = \alpha_p(S)$.
\end{lemma}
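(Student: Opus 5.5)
The plan is to show that the two properties are invariant under a single admissible TS move; by induction along a path in the component of $\R^k(T)$ they then hold for every $S'$ in that component. So let $S\in\M_k(T)$ with $p\notin S$ and at least three occupied branches at $p$. Let $S'=(S\setminus\{x\})\cup\{y\}\in\M_k(T)$ with $xy\in E(T)$. We must prove $p\notin S'$ and $\alpha_p(S')=\alpha_p(S)$. Once this holds, $S'$ again satisfies the hypotheses, so the induction goes through.

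\emph{Step 1: the token cannot enter $p$.} Suppose $y=p$. Then $x$ is a neighbor of $p$ lying in some branch $C_i$. Removing $x$ empties at most that one branch, so $S'\setminus\{p\}$ still occupies at least two branches $C_j,C_l$, with $j,l$ distinct. Pick $u\in S'\cap V(C_j)$ and $w\in S'\cap V(C_l)$. The unique $uw$-path in $T$ passes through $p$, and $p\in S'$ is then an internal vertex on the only geodesic. So $S'\notin\M(T)$, a contradiction. The same conclusion follows from Lemma~\ref{prop:tree-characterization}: $p$ would be an internal vertex of $H_T(S')$ of degree at least $2$ there, hence not a leaf of $H_T(S')$.

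\emph{Step 2: the token cannot cross between branches.} Since $p\notin S$ and $y\ne p$, a slide that changes $\alpha_p$ would need $x$ and $y$ in different branches, or $x=p$. The second option is excluded because $p\notin S$. Two vertices in different branches are not adjacent, since every path between branches passes through $p$. Hence $x$ and $y$ lie in the same branch, and $\alpha_p(S')=\alpha_p(S)$.

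The only step with real content is Step 1. The key point is that three occupied branches leave at least two occupied after one token leaves. With only two occupied branches, the token could leave a branch containing a single token and move onto $p$, so the count of three is exactly what is needed. After the move, the occupancy vector is unchanged, so three coordinates are still positive and the induction hypothesis is restored.
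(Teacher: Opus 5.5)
Your proof is correct and follows the paper's argument: a slide onto $p$ is ruled out because at least two occupied branches survive the removal of $x$, which makes $p$ an internal vertex of the unique path between two tokens; every other slide stays inside one component of $T-p$; and you then induct along a path in the component. The only difference is inessential: the paper chooses its two witnesses in branches other than the one containing $x$, whereas you allow any two distinct occupied branches.
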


\begin{proof}
No admissible TS move starting at $S$ can place a token on $p$.
Such a slide from $x\in S$ would give $U=(S\setminus\{x\})\cup\{p\}$.
Since at least three branches at $p$ are occupied by $S$, there exist vertices $u,v \in S \setminus \{x\}$ in two distinct components of $T-p$ other than the component containing $x$.
The unique $uv$-path in $T$ contains $p$ as an internal vertex.
Since $u,v,p\in U$, $u$ and $v$ are not mutually visible in $U$.
Thus the slide is not admissible.

Every admissible slide therefore avoids $p$.
Its edge lies in one component $C_i$ of $T-p$.
It moves a token within $C_i$, preserving the number of tokens in every component of $T-p$.
Hence it leaves $p$ unoccupied and preserves $\alpha_p(S)$.

In particular, at least three branches at $p$ remain occupied, so the same argument applies after each admissible slide.
By induction on the slide sequence, every reachable configuration $S'$ satisfies $p\notin S'$ and $\alpha_p(S')=\alpha_p(S)$.
\end{proof}

We next identify the branching vertices that satisfy the hypotheses of Lemma~\ref{lem:branch-barrier}.
Let $S \in \M_k(T)$ with $k \ge 3$, and define its \emph{set of active branching vertices} by
\[
B_T(S) = \{v \in V(H_T(S)) \colon \deg_{H_T(S)}(v) \ge 3\}.
\]
By Lemma~\ref{prop:tree-characterization}, $S = L(H_T(S))$, so $S \cap B_T(S) = \emptyset$.
It is nonempty because a tree with at least three leaves has a branching vertex.
By minimality of $H_T(S)$, at least three branches at each $b\in B_T(S)$ contain a vertex of $S$.
Thus Lemma~\ref{lem:branch-barrier} applies at every such vertex $b$.
For each $s\in S$, let $T_s$ be the component of
$T-B_T(S)$ containing $s$.

\begin{lemma}[Pendant-branch structure]
\label{lem:tree-pendant-branches}
Let $T$ be a tree and let $S\in\M_k(T)$, where $k\ge3$.
Let $K$ be the minimal subtree of $T$ containing \(B_T(S)\).
If $B_T(S)=\{b\}$, then $K$ consists of the single vertex $b$. Every component of $T-B_T(S)$ contains at most one vertex of $S$.
Moreover, for each $s \in S$, the component $T_s$ has a unique neighbor $b_s \in B_T(S)$, and $H_T(S) = K \cup \bigcup_{s \in S} P_{b_s s}$, where $P_{xy}$ denotes the unique $xy$-path in $T$.
If we choose $x_s \in V(T_s)$ for each $s \in S$, and put $X = \{x_s \colon s \in S\}$, then
\begin{displaymath}
    H_T(X) = K \cup \bigcup_{s \in S} P_{b_s x_s} \quad \text{ and } \quad  L(H_T(X)) = X.
\end{displaymath}
\end{lemma}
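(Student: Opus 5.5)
The plan is to work entirely inside the Steiner tree $H=H_T(S)$, where $S=L(H)$ by Lemma~\ref{prop:tree-characterization}, and transfer its structure to $T$. Write $B=B_T(S)$.

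\textbf{Structure of $H$.} Removing $B$ from $H$ leaves components that are paths, since all their vertices have $H$-degree at most two. A component of $H-B$ containing a leaf $s$ of $H$ is a path from $s$ to a vertex adjacent to exactly one vertex of $B$. Such an adjacency exists because $H$ has a branching vertex when $k\ge3$. There is only one, since two neighbours in $B$ together with the leaf end would force a vertex of degree at least three inside the component. Every other component of $H-B$ has both ends attached to $B$ and lies on a path between two vertices of $B$, hence inside $K$. This gives $H=K\cup\bigcup_{s}P_{b_ss}$, where the paths $P_{b_ss}\setminus\{b_s\}$ are pairwise disjoint and disjoint from $K$. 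If $B=\{b\}$, then $K=\{b\}$ by minimality.

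\textbf{Passage to $T$.} A component $C$ of $T-B$ is a subtree. Its intersection with $H$ is a union of components of $H-B$, and this intersection is connected: $T$ is a tree, so the path inside $C$ between two points of $C\cap H$ lies in $H$, by convexity of subtrees, and avoids $B$. Hence $C\cap H$ is a single component of $H-B$, so $C$ contains at most one vertex of $S$. Moreover, $T_s\cap H=P_{b_ss}\setminus\{b_s\}$. For the uniqueness of $b_s$, suppose $T_s$ had two neighbours $b,b'\in B$. Then the $bb'$-path in $T$ would pass through $T_s$. That path lies in $K\subseteq H$, so it would meet $T_s\cap H\subseteq P_{b_ss}$ at a vertex outside $K$, which is impossible; the degenerate case $b=b'$ is excluded because $T$ is acyclic. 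I expect this step, in particular the claim that $T_s\cap H$ is connected and disjoint from $K$, to need the most care.

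\textbf{The set $X$.} Each $x_s\in T_s$ is joined to $b_s$ by a path $P_{b_sx_s}$ lying in $T_s\cup\{b_s\}$. Set $H'=K\cup\bigcup_sP_{b_sx_s}$. It is a subtree because each path is attached to $K$ at $b_s\in K$. When $B=\{b\}$, $K=\{b\}$ and $b_s=b$ for all $s$.

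The paths for distinct $s$ lie in distinct components $T_s$, so they are internally disjoint from each other and from $K$. Each $x_s$ therefore has degree one in $H'$: $x_s\ne b_s$, and only its own path enters $T_s$.

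Every vertex of $K$ keeps its degree in $H'$ equal to its degree in $H$. The reason is that the neighbours of $b\in B$ in $H$ outside $K$ are exactly the first vertices of the paths $P_{b_ss}$ with $b_s=b$, and these correspond bijectively to the first vertices of the paths $P_{b_sx_s}$. Thus every vertex of $B$ has degree at least three in $H'$, and every vertex of $K\setminus B$ has degree two, since it lies on a path between branching vertices and is not a leaf. It follows that $L(H')=X$.

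Finally, any subtree containing $X$ contains all paths between vertices of $X$, and hence contains $H'$, because a subtree whose leaves all lie in $X$ is the union of such paths. Therefore $H'=H_T(X)$, which completes the argument.
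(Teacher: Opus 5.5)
Your proof is correct, but it follows a different route from the paper.

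\textbf{The paper's route.} It works directly in $T$ with medians. If two vertices $s,t\in S$ lay in one component of $T-B_T(S)$, then for a third $u\in S$ the median of $s,t,u$ would be a vertex of $H_T(S)$-degree at least three inside that component. The median of $s,b,b'$ likewise rules out two neighbours $b,b'$ of $T_s$. Both decompositions then follow from minimality. For $X$, the paper notes that $s$ and $x_s$ lie in the same branch at every $b\in B_T(S)$, so $B_T(S)\subseteq V(H_T(X))$ and hence $K\subseteq H_T(X)$.

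\textbf{Your route.} You first split $H_T(S)-B_T(S)$ into pendant paths (one leaf and one attachment vertex each) and internal paths lying in $K$. You transfer this to $T$ by convexity of subtrees: each component of $T-B_T(S)$ meets $H_T(S)$ in a single component of $H_T(S)-B_T(S)$. For $X$, you build $H'$, check degrees vertex by vertex, and then use that a tree is the union of the paths between its leaves.

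\textbf{What each buys.} The paper's version is shorter and mirrors the branch-occupancy idea of Lemma~\ref{lem:branch-barrier}. Yours makes explicit the disjointness facts the paper uses tacitly when it calls each $x_s$ a leaf of $H_T(X)$: namely $T_s\cap K=\emptyset$, and the paths $P_{b_sx_s}$ are internally disjoint from each other and from $K$. It also shows directly that $H_T(X)$ has the same branching vertices with the same degrees, so $B_T(X)=B_T(S)$.

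\textbf{Two implicit steps.} Each deserves a line in your write-up. First, distinct leaves of $H_T(S)$ lie in distinct components of $H_T(S)-B_T(S)$; a component with two leaf ends would have no edge to $B_T(S)$ and would be all of $H_T(S)$. Second, the attachment vertex of the pendant path of $s$ is a neighbour of $T_s$ in $T$. This gives existence of $b_s$ before your uniqueness argument.
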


\begin{proof}
For three vertices of a tree, their median is the unique vertex that lies on each of the three pairwise paths.
Here $S=L(H_T(S))$ and $B_T(S)\ne\emptyset$.
Suppose that a component $D$ of $T-B_T(S)$ contains distinct vertices $s,t \in S$, and choose $u \in S \setminus \{s,t\}$.
Let $m$ be the median of $s,t,u$.
As distinct leaves of $H_T(S)$, none of $s,t,u$ lies between the others.
The three paths from $m$ therefore start with distinct edges, so $\deg_{H_T(S)}(m)\ge3$.
On the other hand, $m$ lies on the $st$-path, which is contained in $D$.
Thus $m\in B_T(S)\cap V(D)$, a contradiction.
Hence every component of $T-B_T(S)$ contains at most one vertex of $S$.

Since $T$ is connected and $B_T(S)$ is nonempty, each $T_s$ has at least one neighbor in $B_T(S)$.
Suppose that it has distinct neighbors $b,b' \in B_T(S)$.
The internal vertices of the $bb'$-path lie in $T_s$, and this path is contained in $H_T(S)$.
If $s$ lies on this path, then it is an internal vertex and cannot be a leaf of $H_T(S)$, a contradiction.
Otherwise, let $m$ be the median of $s,b,b'$.
Since $T_s$ is connected and contains $s$ and the internal vertices of the $bb'$-path, the path from $s$ to the $bb'$-path meets it at a vertex $m \in V(T_s)$.
The paths from $m$ to $s$, $b$, and $b'$ start with three distinct edges of $H_T(S)$, so $\deg_{H_T(S)}(m) \ge 3$.
Thus $m \in B_T(S) \cap V(T_s)$, again a contradiction.
Hence $T_s$ has a unique neighbor $b_s \in B_T(S)$.

Both $K$ and the paths $P_{b_s s}$ lie in $H_T(S)$.
Their union is connected and contains $S$.
By the minimality of $H_T(S)$, we have $H_T(S) = K \cup \bigcup_{s \in S} P_{b_s s}$.

Now choose $x_s \in V(T_s)$ for every $s \in S$ and put $X = \{x_s \colon s \in S\}$.
For each $b\in B_T(S)$, the vertices $s$ and $x_s$ lie in the same component of $T-b$.
Thus $X$ occupies exactly the same branches at $b$ as $S$.
In particular, every $b \in B_T(S)$ still has degree at least three in $H_T(X)$, and hence $K \subseteq H_T(X)$.
Since $H_T(X)$ contains both $b_s$ and $x_s$, it also contains every path $P_{b_s x_s}$.
Conversely, this union is connected and contains $X$.
Therefore, $H_T(X) = K \cup \bigcup_{s \in S} P_{b_s x_s}$.
Each $x_s$ is the endpoint of its corresponding pendant path, so $X \subseteq L(H_T(X))$.
By minimality, $L(H_T(X))\subseteq X$, so $L(H_T(X))=X$.
\end{proof}

Recall that the Cartesian product $\mathop{\square}_{i \in I} G_i$ has vertex set $\prod_{i \in I} V(G_i)$.
Two tuples are adjacent if they differ in exactly one coordinate, along an edge of its factor $G_i$.

\begin{theorem}[Active-branch decomposition]
\label{thm:tree-component-decomposition}
Let $T$ be a tree and let $S\in\M_k(T)$, where $k\ge3$.
Then the subtrees $T_s$, $s\in S$, are pairwise vertex-disjoint.
For each tuple $(x_s)_{s \in S}$, with $x_s \in V(T_s)$ for every $s \in S$, define $\varphi\left((x_s)_{s \in S}\right) = \{x_s \colon s \in S\}$.
The map $\varphi$ is a graph isomorphism
\begin{displaymath}
    \mathop{\square}_{s\in S}T_s \cong \mathcal C_k(S),
\end{displaymath}
where $\mathcal C_k(S)$ is the component of $\R^k(T)$ containing $S$.
Equivalently, a configuration belongs to $\C_k(S)$ if and only if it has exactly
one token in each $T_s$ and none outside these subtrees.
\end{theorem}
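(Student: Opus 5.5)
We prove the theorem in three steps: disjointness, that $\varphi$ is a well-defined embedding whose image is closed under admissible slides, and that the image is all of $\mathcal C_k(S)$. \emph{Disjointness.} By Lemma~\ref{lem:tree-pendant-branches}, every component of $T-B_T(S)$ contains at most one vertex of $S$. Hence distinct $s,t\in S$ lie in distinct components, so $T_s\ne T_t$ and the subtrees are vertex-disjoint. Consequently $\varphi$ is injective and every image has exactly $k$ elements. Moreover, the last part of Lemma~\ref{lem:tree-pendant-branches} gives $L(H_T(X))=X$ for every image $X$. Lemma~\ref{prop:tree-characterization} then gives $X\in\M_k(T)$.

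\emph{Adjacency.} Two tuples adjacent in the product differ in one coordinate $s$ along an edge $x_sx_s'$ of $T_s$. The images therefore differ by a slide along an edge of $T$ to an unoccupied vertex. Since both images are feasible, this is an admissible TS edge. Conversely, suppose two images $X,X'$ are TS-adjacent, with $X\setminus X'=\{x\}$ and $X'\setminus X=\{y\}$, $xy\in E(T)$. Each image has exactly one token in each $T_s$ and none elsewhere. So $x$ and $y$ lie in the same $T_s$, and the preimages differ in coordinate $s$ along an edge of $T_s$. Hence $\varphi$ is an isomorphism onto the subgraph of $\R^k(T)$ induced by its image. This image is connected, since each $T_s$ is a tree and a Cartesian product of connected graphs is connected. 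It contains $S=\varphi((s)_{s\in S})$, so it lies in $\mathcal C_k(S)$.

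\emph{Closure (the main step).} We show that every configuration of $\mathcal C_k(S)$ has exactly one token in each $T_s$ and no token on $B_T(S)$ or in components of $T-B_T(S)$ not of the form $T_s$. It suffices to show that no admissible slide leaves the image, and then argue by induction along a path. Take an image $X$ and an admissible slide from $x_s$ to a neighbor $y$ with $y\notin V(T_s)$. Since $T_s$ is a component of $T-B_T(S)$, $y\in B_T(S)$. Let $b=y$. Lemma~\ref{lem:tree-pendant-branches} shows that $X$ occupies the same branches at $b$ as $S$, hence at least three of them. Applying the argument of Lemma~\ref{lem:branch-barrier} to $X$ at $b$ shows that placing a token on $b$ is not admissible, a contradiction. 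Thus every admissible slide stays inside some $T_s$, and the result is again an image.

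This forces the component to equal the image, completing the proof of the stated equivalence. The one point to check carefully in the closure step is that the branch-occupancy hypothesis of Lemma~\ref{lem:branch-barrier} holds for every image $X$ and not only for $S$. This is exactly what the branch-preservation part of Lemma~\ref{lem:tree-pendant-branches} supplies.
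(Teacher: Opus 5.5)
Your proof is correct and follows essentially the same route as the paper: Lemma~\ref{lem:tree-pendant-branches} gives disjointness and feasibility of every image, the product structure gives reachability and matching adjacency, and the branch barrier keeps $B_T(S)$ unoccupied. The only difference is that you re-verify the barrier hypothesis at every image $X$, using the fact that $s$ and $x_s$ lie in the same component of $T-b$ (which is noted inside the proof of Lemma~\ref{lem:tree-pendant-branches} and is immediate anyway). The paper instead applies Lemma~\ref{lem:branch-barrier} once to $S$ at each $b\in B_T(S)$, whose conclusion already covers the whole component.
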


\begin{proof}
By Lemma~\ref{prop:tree-characterization}, $S=L(H_T(S))$.
For $b\in B_T(S)$, every component of $H_T(S)-b$ contains a leaf, so at least $\deg_{H_T(S)}(b)\ge3$ branches are occupied.
By Lemma~\ref{lem:branch-barrier}, $B_T(S)$ stays unoccupied along every walk from $S$.
Hence tokens cannot move between components of $T-B_T(S)$.
By Lemma~\ref{lem:tree-pendant-branches}, the subtrees $T_s$ are pairwise vertex-disjoint, each initially contains one token, and all other components are unoccupied.
These properties are therefore invariant along every path in $\mathcal C_k(S)$.

Conversely, choose $x_s \in V(T_s)$ for each $s\in S$, and put $X = \varphi \left((x_s)_{s \in S}\right) = \{x_s \colon s \in S\}$.
Lemma~\ref{lem:tree-pendant-branches} and Lemma~\ref{prop:tree-characterization} imply that $X$ is a mutual-visibility set.
Move the token initially at $s$ to $x_s$ along the unique path in $T_s$, processing the tokens one at a time..
The intermediate configurations have this form and are feasible, so $X$ is reachable.
Thus $\varphi$ is well defined and surjective onto $V(\mathcal C_k(S))$.
It is injective since $x_s$ is the unique vertex in $X\cap V(T_s)$.
Adjacency means changing one coordinate along an edge of its subtree.
Hence $\varphi$ is a graph isomorphism.
\end{proof}

Figure~\ref{fig:branch-barrier-decomposition} illustrates the Cartesian-product decomposition.
In panel~(a), the branches $C_1,C_2,C_3$ contain $3$, $1$, and $2$ vertices of $S$, respectively, so $\alpha_p(S) = (3,1,2)$ in this ordering.
Lemma~\ref{lem:branch-barrier} keeps $p$ unoccupied and $\alpha_p(S)$ invariant, preventing tokens from changing branches.

In panel~(b), $B_T(S) = \{b_1,b_2\}$.
Deleting them leaves four occupied components $T_{s_i}$ with $S\cap V(T_{s_i})=\{s_i\}$, $1\le i\le4$, and the unoccupied central vertex.
The core $K$ is the path joining $b_1b_2$-path, and $H_T(S)$ consists of $K$ together with the pendant paths $P_{b_{s_i}s_i}$.
The component consists of the configurations $\{x_{s_1},x_{s_2},x_{s_3},x_{s_4}\}$ with $x_{s_i}\in V(T_{s_i})$.
An admissible token slide changes one coordinate along an edge of its subtree, giving $\mathcal C_k(S) \cong T_{s_1} \mathbin{\square} T_{s_2} \mathbin{\square} T_{s_3} \mathbin{\square} T_{s_4}$.

\begin{figure}[h]
\centering
\begin{tikzpicture}[scale = 0.83,
    vertex/.style = {
        circle,
        draw = black,
        fill = white,
        inner sep = 0pt,
        minimum size = 5pt
    },
    redvertex/.style = {
        circle,
        draw = black,
        fill = red,
        inner sep = 0pt,
        minimum size = 5pt
    },
    edge/.style = {
        thick,
        color = black
    },
    component/.style = {
        ellipse,
        thick,
        dashed,
        draw = Gray,
        fill = lightgray!40,
        inner sep = 4pt
    },
    arrow/.style = {
        -{Latex[length = 2.5mm]},
        draw = Black,
        line width = 0.9pt
    }
]
    \coordinate (K) at (-4,0);
    \coordinate (A) at (-4,-3.5);
    \coordinate (b1) at (-5,0);
    \coordinate (b2) at (-3,0);
    \coordinate (s1) at (-6.2,1.5);
    \coordinate (aux1) at (-5.5,1);
    \coordinate (leaf11) at (-5.5,2.5);
    \coordinate (leaf1) at (-4.8,1.5);
    \coordinate (s2) at (-6.2,-1.5);
    \coordinate (aux2) at (-5.5,-1);
    \coordinate (leaf2) at (-4.8,-1.7);
    \coordinate (s3) at (-1.8,-1.5);
    \coordinate (aux3) at (-2.5,-1);
    \coordinate (leaf33) at (-2.5,-2);
    \coordinate (leaf3) at (-3,-1.5);
    \coordinate (s4) at (-1.8,1.5);
    \coordinate (aux4) at (-2.5,1);
    \coordinate (leaf4) at (-2.8,1.8);

    \node[component, fit = (s1)(aux1)(leaf1)(leaf11), label = above left:{\textcolor{darkgray}{$C_1$}}] {};
    \node[component, fit = (s2)(aux2)(leaf2), label = below left:{\textcolor{darkgray}{$C_2$}}] {};
    \node[component, fit = (s3)(aux3)(leaf33)(leaf3)(b2)(s4)(aux4)(leaf4), label = right:{\textcolor{darkgray}{$C_3$}}] {};

    \draw[edge]
        (b1)--(b2)
        (s1)--(aux1)--(b1) (leaf11)--(aux1)--(leaf1)
        (s2)--(aux2)--(b1) (aux2)--(leaf2)
        (s3)--(aux3)--(b2) (leaf33)--(aux3)--(leaf3)
        (s4)--(aux4)--(b2) (aux4)--(leaf4)
    ;

    \node[vertex, label = left:{$p$}] at (b1) {};
    \node[vertex] at (b2) {};

    \node[redvertex, label = above:{$s_1$}] at (s1) {};
    \node[vertex] at (aux1) {};
    \node[redvertex, label = above:{$s_2$}] at (leaf11) {};
    \node[redvertex, label = above:{$s_3$}] at (leaf1) {};

    \node[redvertex, label = below right:{$s_4$}] at (s2) {};
    \node[vertex] at (aux2) {};
    \node[vertex] at (leaf2) {};

    \node[redvertex, label = above:{$s_5$}] at (s3) {};
    \node[vertex] at (aux3) {};
    \node[vertex] at (leaf33) {};
    \node[vertex] at (leaf3) {};
    \node[redvertex, label = below:{$s_6$}] at (s4) {};
    \node[vertex] at (aux4) {};
    \node[vertex] at (leaf4) {};

    \node at (A) {(a)};
    
    \coordinate (K) at (-4+5+2,0);
    \coordinate (B) at (-4+5+2,-3.5);
    \coordinate (b1) at (-5+5+2,0);
    \coordinate (b2) at (-3+5+2,0);
    \coordinate (s1) at (-6.2+5+2,1.5);
    \coordinate (aux1) at (-5.5+5+2,1);
    \coordinate (leaf11) at (-5.5+5+2,2);
    \coordinate (leaf1) at (-4.8+5+2,1.5);
    \coordinate (s2) at (-6.2+5+2,-1.5);
    \coordinate (aux2) at (-5.5+5+2,-1);
    \coordinate (leaf2) at (-4.8+5+2,-1.7);
    \coordinate (s3) at (-1.8+5+2,-1.5);
    \coordinate (aux3) at (-2.5+5+2,-1);
    \coordinate (leaf33) at (-2.5+5+2,-2);
    \coordinate (leaf3) at (-3+5+2,-1.5);
    \coordinate (s4) at (-1.8+5+2,1.5);
    \coordinate (aux4) at (-2.5+5+2,1);
    \coordinate (leaf4) at (-2.8+5+2,1.8);

    \node[component, fit = (s1)(aux1)(leaf1)(leaf11), label = above left:{\textcolor{darkgray}{$T_{s_1}$}}] {};
    \node[component, fit = (s2)(aux2)(leaf2), label = below left:{\textcolor{darkgray}{$T_{s_2}$}}] {};
    \node[component, fit = (s3)(aux3)(leaf33)(leaf3), label = below right:{\textcolor{darkgray}{$T_{s_3}$}}] {};
    \node[component, fit = (s4)(aux4)(leaf4), label = above right:{\textcolor{darkgray}{$T_{s_4}$}}] {};

    \draw[edge]
        (b1)--(K)--(b2)
        (s1)--(aux1)--(b1) (leaf11)--(aux1)--(leaf1)
        (s2)--(aux2)--(b1) (aux2)--(leaf2)
        (s3)--(aux3)--(b2) (leaf33)--(aux3)--(leaf3)
        (s4)--(aux4)--(b2) (aux4)--(leaf4)
    ;

    \draw[edge,blue,thick]
        (s1)--(aux1)--(b1)--(aux2)--(s2)
        (s4)--(aux4)--(b2)--(aux3)--(s3)
    ;

    \node[vertex, label = below:{$K$}] at (K) {};
    \node[vertex, label = left:{$b_1$}] at (b1) {};
    \node[vertex, label = right:{$b_2$}] at (b2) {};

    \node[redvertex, label = above:{$s_1$}] at (s1) {};
    \node[vertex] at (aux1) {};
    \node[vertex] at (leaf11) {};
    \node[vertex] at (leaf1) {};

    \node[redvertex, label = below right:{$s_2$}] at (s2) {};
    \node[vertex] at (aux2) {};
    \node[vertex] at (leaf2) {};

    \node[redvertex, label = below:{$s_3$}] at (s3) {};
    \node[vertex] at (aux3) {};
    \node[vertex] at (leaf33) {};
    \node[vertex] at (leaf3) {};

    \node[redvertex, label = above:{$s_4$}] at (s4) {};
    \node[vertex] at (aux4) {};
    \node[vertex] at (leaf4) {};

    \node at (B) {(b)};
\end{tikzpicture}
\caption{(a) Branch occupancy at $p$. (b) Pendant-branch decomposition. Red vertices form the mutual-visibility set $S$, dashed regions mark the indicated components, and blue edges in (b) mark the pendant paths.}
\label{fig:branch-barrier-decomposition}
\end{figure}

The decomposition also gives the basic parameters of each component.
Recall that a connected graph is a median graph if every three vertices have a unique vertex lying on a shortest path between each pair.

\begin{corollary}
\label{cor:tree-component-parameters}
Under the hypotheses of Theorem~\ref{thm:tree-component-decomposition}, put $n_s = \lvert V(T_s) \rvert$ and $m_s = \lvert E(T_s) \rvert$.
Then
\begin{displaymath}
    \lvert V(\mathcal C_k(S)) \rvert = \prod_{s \in S} n_s, \qquad \lvert E(\mathcal C_k(S)) \rvert = \sum_{s \in S} m_s \prod_{t \in S \setminus \{s\}} n_t,
\end{displaymath}
and $\diam(\mathcal C_k(S)) = \sum_{s \in S} \diam(T_s)$.
Moreover, $\mathcal C_k(S)$ is a median graph and hence a partial cube.
\end{corollary}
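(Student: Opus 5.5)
The plan is to transfer everything through the isomorphism $\varphi\colon \mathop{\square}_{s\in S}T_s\to\mathcal C_k(S)$ of Theorem~\ref{thm:tree-component-decomposition}. Each claimed parameter is a standard property of finite Cartesian products, applied to factors that are trees. So the work consists of recalling, or briefly reproving, these product formulas.

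\emph{Order and size.} The vertex set of the product is $\prod_{s\in S}V(T_s)$, which gives $\prod_s n_s$ at once. For the edges, I will classify each edge of the product by the unique coordinate $s$ in which its endpoints differ. In that coordinate the endpoints form an edge of $T_s$, and all other coordinates agree and are arbitrary. Hence there are exactly $m_s\prod_{t\ne s}n_t$ edges of type $s$, and summing over $s$ gives the stated formula.

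\emph{Diameter.} I will use the distance formula for Cartesian products,
\[
d_{\square}(x,y)=\sum_{s} d_{T_s}(x_s,y_s).
\]
For the upper bound on the distance, change the coordinates one at a time along geodesics in the factors. For the lower bound, project a path onto each factor: every edge of a product path advances exactly one coordinate by one step in its factor, so the projection to $T_s$ is a walk from $x_s$ to $y_s$, and the number of edges used in coordinate $s$ is at least $d_{T_s}(x_s,y_s)$. Maximizing the sum coordinatewise, by choosing for each $s$ a diametral pair of $T_s$, gives $\diam(\mathcal C_k(S))=\sum_s\diam(T_s)$.

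\emph{Median property.} Trees are median graphs: for three vertices, the unique vertex lying on all three pairwise paths is the median, and it is unique. I expect this to be the only step needing care, since I must check that the Cartesian product of median graphs is a median graph. I will argue directly with the distance formula. A vertex $z$ lies on a geodesic between $x$ and $y$ in the product if and only if
\[
d(x,z)+d(z,y)=d(x,y).
\]
Since each coordinate satisfies the triangle inequality, this equality holds in the sum if and only if it holds in every coordinate. Thus $z$ is a median of $x,y,w$ in the product exactly when each $z_s$ is a median of $x_s,y_s,w_s$ in $T_s$. Existence and uniqueness therefore follow coordinatewise from the tree case, and the product is connected because each factor is. Finally, I will cite the standard fact that median graphs are partial cubes, or argue directly: each tree $T_s$ is isometric in $\Q_{m_s}$ via its edge cuts, and the product of these isometric embeddings is isometric into $\Q_{\sum m_s}$ by the distance formula.
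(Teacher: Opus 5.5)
Your proposal is correct and follows the same route as the paper: you transfer everything through the isomorphism $\varphi$ of Theorem~\ref{thm:tree-component-decomposition}, count vertices and edges coordinatewise, use the additive distance formula for Cartesian products together with diametral pairs in each factor, and take the tree median in each coordinate before invoking the fact that median graphs are partial cubes. Your interval argument (equality in the summed triangle inequality forces equality in every coordinate) and your optional edge-cut embedding into $\Q_{\sum_s m_s}$ only make explicit two points the paper leaves implicit or cites: why the coordinatewise median is unique, and a self-contained partial-cube embedding.
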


\begin{proof}
Each vertex of the product is a tuple containing one vertex from each factor $T_s$.
Each edge chooses an edge in one factor and vertices in the others, proving the counts.

The product distance between tuples $x=(x_s)$ and $y=(y_s)$ is $\sum_{s \in S} d_{T_s}(x_s,y_s)$: each move changes one coordinate, and following shortest paths in successive factors attains the bound.
Choosing $x_s,y_s$ at distance $\diam(T_s)$ in each factor gives the diameter formula.

The median of three product vertices is obtained by taking the tree median in each coordinate.
Thus $\mathcal C_k(S)$ is a median graph.
Every median graph is a partial cube; see \citet{HammackImrichKlavzar2011}.
\end{proof}

The mobile value for trees is part of the following characterization.

\begin{theorem}[\citealp{DettlaffEtAl2025}]
\label{thm:cited-mobile-trees}
Let $G$ be a connected graph with at least two vertices.
Then $\Mob(G)=2$ if and only if $G$ is a tree.
\end{theorem}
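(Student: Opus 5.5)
The plan is to prove the two implications separately, using the component criterion (Proposition~\ref{prop:mobile-component}) as the main tool. Recall that $2\le\Mob(G)\le\mu(G)$ for every connected $G$ with at least two vertices.

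\emph{Trees give $\Mob=2$.} Let $G=T$ be a tree. If $\mu(T)=\lvert L(T)\rvert\le 2$, then Corollary~\ref{cor:cited-tree-mu} already gives $\Mob(T)=2$. Otherwise, fix $k\ge3$ and $S\in\M_k(T)$. By Theorem~\ref{thm:tree-component-decomposition}, every configuration in $\mathcal C_k(S)$ has its tokens in $\bigcup_{s\in S}V(T_s)$. The $T_s$ are components of $T-B_T(S)$, so this union is disjoint from $B_T(S)$. That set is nonempty, since a tree with at least three leaves has a branching vertex. Hence the union of the configurations in $\mathcal C_k(S)$ misses every active branching vertex. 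By Proposition~\ref{prop:mobile-component}, $S$ is not mobile. Since $S$ and $k\ge3$ were arbitrary, $\Mob(T)=2$. This direction is essentially a corollary of the decomposition theorem.

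\emph{Non-trees give $\Mob\ge3$.} Let $G$ contain a cycle, and take a shortest cycle $C$ of length $g\ge3$. A shortest cycle is isometric, since a shortcut would create a shorter cycle. On $C$, three vertices cutting it into arcs of lengths $a,b,c$ with $a+b+c=g$ and $\max\{a,b,c\}\le g/2$ form a mutual-visibility set of $C$. For each pair, the arc avoiding the third vertex is a geodesic. By Theorem~\ref{thm:full-isometric-embedding}, such triples lie in $\M_3(G)$ and inherit the TS adjacencies among them. Rotating a balanced triple around $C$ one slide at a time keeps all arcs at most $g/2$, possibly using a short check for $g\in\{3,4,5\}$. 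So one component $\mathcal C$ of $\R^3(G)$ covers $V(C)$.

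The remaining task is to show that $\mathcal C$ also covers every vertex outside $C$. Then Proposition~\ref{prop:mobile-component} gives a mobile set of size $3$.

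\emph{Main obstacle: covering vertices off $C$.} My first attempt is induction on the distance $d(x,C)$. Take $x$ with $d(x,C)=t$, and a neighbor $x'$ of $x$ with $d(x',C)=t-1$. By induction, some configuration in $\mathcal C$ contains $x'$. I would strengthen the inductive hypothesis: there is such a configuration $\{x',y,z\}$ with $y,z\in V(C)$ whose projections are spread out so that $y$ and $z$ see each other along $C$. Sliding the token from $x'$ to $x$ keeps $x$ visible from $y$ and $z$ via geodesics that go down to $C$ and then around.

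The delicate point is choosing $y,z$ so that the geodesics from $x$ do not pass through the other token. I would choose $y,z$ to be the two vertices of $C$ farthest apart relative to the gate of $x'$ on $C$. If this fails in general, the fallback is to replace $C$ by a shortest cycle through a suitable edge near $x$. Then the argument reduces to the cycle case plus a single pendant move, using $\diam$ and isometry to control the geodesics. I expect this covering step to require the most care.
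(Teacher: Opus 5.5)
\emph{Tree direction.} This part is correct and is exactly the paper's argument, namely the proof of Corollary~\ref{cor:mobile-mutual-visibility-trees}. The component $\mathcal C_k(S)$ avoids the nonempty set $B_T(S)$, so Proposition~\ref{prop:mobile-component} rules out mobility for every $k\ge3$. The paper does not prove the converse; it takes it from \citet{DettlaffEtAl2025}.

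\emph{Converse: the gap.} In your proposal the converse is where the gap lies. The step you call the main obstacle, covering the vertices off $C$, is not proved, and the induction you sketch does not go through as stated. After the slide $x'\to x$, visibility of $x$ from $y$ is inherited from $\{x',y,z\}$ only when $d(x,y)=d(x',y)+1$. Otherwise no $x$--$y$ geodesic passes through $x'$, and your sketch does not control where the new geodesics meet $C$ or whether they run into $z$. The ``gate'' of $x'$ on $C$ is not well defined in a general graph: there may be several nearest vertices, and geodesics from $x$ to $y$ need not enter $C$ at a nearest one. So the strengthened hypothesis is never actually formulated. The fallback of switching to another shortest cycle would also require showing that the new cycle's triples lie in the same component of $\R^3(G)$, which you do not address.

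\emph{How to close it.} You can close the gap without any induction on $d(x,C)$ by fixing the two stationary tokens once for each $x$.
\begin{enumerate}
\item Let $c\in V(C)$ be nearest to $x$ and let $c=w_0,w_1,\dots,w_t=x$ be a shortest path, so $d(w_i,C)=d(w_i,c)=i$.
\item Let $y$ be a neighbor of $c$ on $C$. Place $z$ on $C$ so that the gaps of $\{c,y,z\}$ along $C$ are $1$ from $c$ to $y$, $\lfloor g/2\rfloor$ from $y$ to $z$, and $\lceil g/2\rceil-1$ from $z$ to $c$. Then $\{c,y,z\}\in\M_3(C)$.
\item Every $\{w_i,y,z\}$ with $i\ge1$ is a mutual-visibility set.
\begin{itemize}
\item[(a)] For $w_i$ and $y$: $d(w_i,y)\in\{i,i+1\}$. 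A geodesic of length $i$ has its interior off $C$, and one of length $i+1$ is the path to $c$ followed by the edge $cy$.
\item[(b)] For $w_i$ and $z$: take any $w_i$--$z$ geodesic and its first vertex $q$ on $C$. If $q=y$, then $d(w_i,z)\ge i+\lfloor g/2\rfloor\ge i+d(c,z)$, so the route through $c$ and the arc from $c$ to $z$ avoiding $y$ is also a geodesic. If $q\ne y$, replace the tail by the arc from $q$ to $z$ in $C-y$. This arc has length at most $\lfloor g/2\rfloor$, so it is a geodesic because $C$ is isometric.
\item[(c)] The tokens $y$ and $z$ see each other along $C$, since $w_i\notin V(C)$.
\end{itemize}
\item The slides $c\to w_1\to\dots\to w_t$ then join $\{x,y,z\}$ to the component containing $\R^3(C)$. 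That layer is connected by the cycle theorem and embedded in $\R^3(G)$ by Theorem~\ref{thm:full-isometric-embedding}.
\end{enumerate}
This yields a component covering $V(G)$, and hence a mobile set of size $3$.
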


Theorem~\ref{thm:tree-component-decomposition} provides a structural proof of the tree case, also stated in \citet[Corollary~3.3]{DettlaffEtAl2025}.

\begin{corollary}[\citealp{DettlaffEtAl2025}]
\label{cor:mobile-mutual-visibility-trees}
If $T$ is a tree with at least two vertices, then $\Mob(T)=2$.
\end{corollary}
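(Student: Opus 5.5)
We need to show $\Mob(T)=2$ for a tree $T$ with at least two vertices. The lower bound $\Mob(T)\ge 2$ is already recorded in the preliminaries. $\R^2(T)=\T_2(T)$ is connected and its configurations cover $V(T)$, so Corollary~\ref{cor:connected-layer-mobile} applies. If $T$ is a path, then $\mu(T)=\lvert L(T)\rvert=2$ by Corollary~\ref{cor:cited-tree-mu}, and we are done. So the remaining task is to show that for $3\le k\le\mu(T)$ no $S\in\M_k(T)$ is mobile. By Proposition~\ref{prop:mobile-component}, it suffices to exhibit a vertex of $T$ that lies in no configuration of $\mathcal C_k(S)$.

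Fix $k\ge3$ and $S\in\M_k(T)$. The set $B_T(S)$ of active branching vertices is nonempty, as noted before Lemma~\ref{lem:tree-pendant-branches}. Pick any $b\in B_T(S)$. By Theorem~\ref{thm:tree-component-decomposition}, every configuration in $\mathcal C_k(S)$ has exactly one token in each $T_s$ and no token outside $\bigcup_{s\in S}V(T_s)$. The subtrees $T_s$ are components of $T-B_T(S)$, so $b\notin V(T_s)$ for every $s$. Hence $b$ belongs to no member of $\mathcal C_k(S)$. Alternatively, Lemma~\ref{lem:branch-barrier} applied at $b$ gives this directly, since $b$ has at least three occupied branches and therefore stays unoccupied throughout the component. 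Thus $\bigcup_{X\in V(\mathcal C_k(S))}X\neq V(T)$, and Proposition~\ref{prop:mobile-component} shows that $S$ is not mobile.

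Combining both parts, no layer of cardinality at least three contains a mobile set, and the layer of cardinality two does, so $\Mob(T)=2$. The only delicate point is the claim that some vertex is left uncovered. This needs $B_T(S)\neq\emptyset$ and $b\notin S$, which follow from the identity $S=L(H_T(S))$ in Lemma~\ref{prop:tree-characterization} together with the fact that a tree with at least three leaves has a vertex of degree at least three. I expect no further obstacle.
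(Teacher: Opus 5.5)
Your proposal is correct and follows essentially the same route as the paper. The paper also takes $\Mob(T)\ge 2$ from the preliminaries and, for $k\ge 3$, uses Theorem~\ref{thm:tree-component-decomposition} to show that $\mathcal C_k(S)$ never covers the nonempty set $B_T(S)$, so Proposition~\ref{prop:mobile-component} rules out mobility; your separate path case and the alternative via Lemma~\ref{lem:branch-barrier} are harmless extras.
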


\begin{proof}
Section~\ref{sec:preliminaries} gives $\Mob(T)\ge2$.
Let $S\in\M_k(T)$ with $k\ge  3$.
By Theorem~\ref{thm:tree-component-decomposition}, all configurations in $\mathcal C_k(S)$ avoid the nonempty set $B_T(S)$.
Thus the configurations in $\mathcal C_k(S)$ do not cover $V(T)$.
By Proposition~\ref{prop:mobile-component}, $S$ is not mobile.
\end{proof}

The branch-barrier lemma also yields a lower bound on the number of connected components of a layer.
Fix a branching vertex $p$ of $T$, let $r=\deg_T(p)$, and order
the components $C_1,\ldots,C_r$ of $T-p$ as in the definition
of $\alpha_p$.
For each $i\in\{1,\ldots,r\}$, put
$\ell_i = \lvert L(T) \cap V(C_i) \rvert$.

For an integer $k\ge 0$, let $Q_p(k)$ be the set of integer
vectors $(a_1,\ldots,a_r)$ with at least three positive
coordinates satisfying $0\le  a_i\le \ell_i$ for every
$i\in\{1,\ldots,r\}$ and $\sum_{i=1}^{r}a_i=k$.

\begin{lemma}
\label{lemma:tree-components-lower}
Let $T$ be a tree and let $p$ be a branching vertex.
If $3\le k<\lvert L(T)\rvert$, then $\R^k(T)$ has at least $\max\{2,\lvert Q_p(k)\rvert\}$ connected components.
\end{lemma}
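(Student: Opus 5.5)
We prove the two lower bounds separately: $\lvert Q_p(k)\rvert$ components via the branch-barrier invariant, and $2$ components via Theorem~\ref{thm:tree-component-decomposition}.

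\emph{Bound $\lvert Q_p(k)\rvert$.} Fix $p$ and the branches $C_1,\ldots,C_r$. For each vector $a=(a_1,\ldots,a_r)\in Q_p(k)$ we build a configuration $S_a$ by choosing $a_i$ leaves of $T$ lying in $C_i$; this is possible because $a_i\le\ell_i$. Then $S_a$ is a $k$-subset of $L(T)$ that avoids $p$, since $p$ is not a leaf. By heredity (Proposition~\ref{prop:cited-heredity}) and Corollary~\ref{cor:cited-tree-mu}, $S_a\in\M_k(T)$, and by construction $\alpha_p(S_a)=a$. Each $a\in Q_p(k)$ has at least three positive coordinates, so Lemma~\ref{lem:branch-barrier} applies to $S_a$: every configuration in its component avoids $p$ and has occupancy vector $a$. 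Hence distinct vectors in $Q_p(k)$ give configurations in distinct components, and $\R^k(T)$ has at least $\lvert Q_p(k)\rvert$ components.

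\emph{Bound $2$.} We exhibit two configurations in different components. Because $k<\lvert L(T)\rvert$, some leaf $u$ of $T$ is omitted by a $k$-subset $S$ of leaves. By Theorem~\ref{thm:tree-component-decomposition}, every configuration in $\mathcal C_k(S)$ has exactly one token in each $T_s$ and none elsewhere. The plan is to show that some feasible $k$-configuration $S'$ violates this pattern, so $S'\notin\mathcal C_k(S)$.

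\emph{Construction of $S'$.} Choose $s\in S$ and set $S'=(S\setminus\{s\})\cup\{u\}$. This is again a $k$-subset of leaves, so it is feasible. If $u$ lies outside $\bigcup_{t\in S}V(T_t)$, then $S'$ has a token outside these subtrees, and we are done. Otherwise $u\in V(T_t)$ for some $t\in S$. Take $s\ne t$; then $S'$ has two tokens $t,u$ in $T_t$, so again $S'\notin\mathcal C_k(S)$. In either case $\R^k(T)$ has at least two components.

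\emph{Main obstacle.} The delicate point is the case analysis showing $S'\notin\mathcal C_k(S)$. It relies on the characterization ``exactly one token in each $T_s$, none outside,'' which is defined relative to $B_T(S)$ for the fixed $S$. This characterization holds because the invariant of Theorem~\ref{thm:tree-component-decomposition} is preserved along every walk from $S$. A shortcut would be to use that $\lvert Q_p(k)\rvert\ge2$ whenever it is positive, but that may fail, so the explicit construction of $S'$ is needed.
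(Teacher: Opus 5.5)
Your proof is correct. The bound $\lvert Q_p(k)\rvert$ is argued exactly as in the paper.

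For the bound $2$ you take a different route. The paper stays with the branch barrier at the fixed vertex $p$. It chooses the $k$ leaves of $S$ so that at least three branches at $p$ are occupied, then replaces a token $s$ by an omitted leaf $z$ lying in a different branch from $s$. This changes $\alpha_p$, so Lemma~\ref{lem:branch-barrier} puts $S$ and $S'$ in different components.

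You instead use the exact description of $\mathcal C_k(S)$ from Theorem~\ref{thm:tree-component-decomposition} and split on where the omitted leaf $u$ lies. Either $u$ falls outside $\bigcup_{t\in S}V(T_t)$, or it gives a second token in some $T_t$, using $s\neq t$, which is possible since $k\ge3$.

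Your version works for any $k$-set of leaves and does not need $p$ at all. The price is that it depends on the heavier decomposition theorem, which is itself built on the branch barrier and the pendant-branch lemma. The paper's argument is more economical, reusing only the barrier at $p$ already needed for the first bound.

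Your closing remark is also right: $\lvert Q_p(k)\rvert$ can equal $1$. For example, take three branches at $p$ containing $1,1,2$ leaves and $k=3$. So an explicit second configuration is genuinely needed.
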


\begin{proof}
For each vector $\mathbf{a} = (a_1,\ldots,a_r)\in Q_p(k)$, choose sets $S_i \subseteq L(T) \cap V(C_i)$ with $\lvert S_i \rvert = a_i$, and put $S_{\mathbf{a}} = \bigcup_{i=1}^{r} S_i$.
Then $S_{\mathbf{a}}$ consists of $k$ leaves of $T$, so it is a mutual-visibility set with $\alpha_p(S_{\mathbf{a}}) = \mathbf{a}$.
Lemma~\ref{lem:branch-barrier} places distinct vectors in distinct components, since each has at least three positive coordinates.
This gives at least $\lvert Q_p(k)\rvert$ components.

For the bound of two, choose leaves in three distinct branches at $p$ and extend this selection to a set $S$ of $k$ leaves.
This is possible since each branch contains a leaf and $3\le k<\lvert L(T)\rvert$.
Choose $z \in L(T) \setminus S$, and choose $s \in S$ in a branch other than the branch containing $z$, and put $S' = (S \setminus \{s\}) \cup \{z\}$.
Both sets are mutual-visibility sets of cardinality $k$.
Since $s,z$ lie in distinct branches, $\alpha_p(S') \neq \alpha_p(S)$.
At least three branches are occupied by $S$, so Lemma~\ref{lem:branch-barrier} places it and $S'$ in distinct components.
Together these bounds give at least $\max\{2,\lvert Q_p(k)\rvert\}$ components.
\end{proof}

Branching thus disconnects intermediate layers.
We can now characterize the connected layers of a tree.

\begin{theorem}[Connectivity of tree layers]
\label{thm:tree-connectivity}
Let $T$ be a tree with at least two vertices, and let $0\le k\le\mu(T)$.
Then $\R^k(T)$ is connected if and only if $k \in \{0,1,2,\mu(T)\}$.
\end{theorem}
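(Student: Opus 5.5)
The plan is to split by whether $T$ has a branching vertex, handling the small layers first.

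For $k\in\{0,1,2\}$, connectivity was already established in Section~\ref{sec:preliminaries}: $\R^0(T)\cong K_1$, $\R^1(T)\cong T$, and $\R^2(T)=\T_2(T)$, which is connected by Theorem~\ref{thm:cited-token-connectivity}. (When $\lvert V(T)\rvert=2$ the last layer is a single vertex.)

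Suppose next that $T$ has no branching vertex, so $T$ is a path. By Corollary~\ref{cor:cited-tree-mu}, $\mu(T)=\lvert L(T)\rvert\le 2$. Every $k$ with $0\le k\le\mu(T)$ then lies in $\{0,1,2\}$, and both sides of the equivalence hold.

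Now suppose $T$ has a branching vertex $p$, so $\mu(T)=\lvert L(T)\rvert\ge3$. Two things remain to prove.

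\emph{The intermediate layers are disconnected.} For $3\le k<\lvert L(T)\rvert$, Lemma~\ref{lemma:tree-components-lower} gives at least two components of $\R^k(T)$. So the layers with $3\le k<\mu(T)$ are disconnected, which proves the ``only if'' direction.

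\emph{The top layer is connected.} Take $k=\mu(T)=\lvert L(T)\rvert\ge3$ and let $S\in\M_k(T)$. By Lemma~\ref{prop:tree-characterization}, $S=L(H_T(S))$.

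The main step is to show that $S=L(T)$, so that the top layer has exactly one vertex. A subtree $H$ of $T$ satisfies $\lvert L(H)\rvert\le\lvert L(T)\rvert$, with equality forcing $L(H)=L(T)$. To see this, root each leaf $x$ of $H$ outward. If $x$ is not a leaf of $T$, extend from $x$ along edges not in $H$ until reaching a leaf of $T$. The extended paths from distinct leaves of $H$ lie in distinct components of $T-E(H)$, so they end at distinct leaves of $T$. This gives an injection $L(H)\to L(T)$.

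If some leaf $x\in L(H)$ were not a leaf of $T$, then $x$ has a neighbor outside $H$, and its extension ends at a leaf of $T$ not in $H$. For equality, this injection would have to be a bijection. Since $T$ is connected and $H\ne T$ in this situation, the component of $T-V(H)$ containing that extension is attached at $x$ only. All leaves of $T$ in that component map to... at minimum, some leaf of $T$ inside $H$ that is not a leaf of $H$ would be needed to balance the count.

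This counting step is the main obstacle, and I would handle it more cleanly instead. Any leaf of $T$ lying in $H$ is a leaf of $H$. Every leaf of $H$ that is not a leaf of $T$ contributes, through its extension, a leaf of $T$ outside $H$. Hence the number of leaves of $T$ outside $H$ equals the number of leaves of $H$ that are not leaves of $T$, because the extension map is injective and every leaf of $T$ outside $H$ arises this way, by following the path back to $H$. This gives equality of counts in every case, so the naive argument gives nothing.

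Use the branching vertex instead. Suppose $S\ne L(T)$. Then some leaf $z\in L(T)\setminus S$ exists, since $\lvert S\rvert=\lvert L(T)\rvert$ and $S$ must contain a non-leaf $x$ of $T$. By Lemma~\ref{lem:tree-pendant-branches}, $x$ lies in its pendant subtree $T_x$, which contains no other token. A leaf of $T$ in the component of $T-E(H_T(S))$ beyond $x$ is unoccupied. Moving the token from $x$ to that leaf stays inside $T_x$, so it keeps $\lvert S\rvert$ and stays in $\C_k(S)$ by Theorem~\ref{thm:tree-component-decomposition}.

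Doing this for every token lands in a set consisting of $k$ leaves, which must be all of $L(T)$. So every component of the top layer contains $L(T)$, and the top layer is connected.
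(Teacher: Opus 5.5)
Your handling of $k\le 2$, of paths, and of the intermediate layers via Lemma~\ref{lemma:tree-components-lower} matches the paper. Your final argument for the top layer is also correct, but it follows a different route. The two paragraphs before it assert false things and should be deleted rather than just bypassed.

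\textbf{The false claims.} The proposed ``main step'' fails: a mutual-visibility set of size $\mu(T)$ need not equal $L(T)$. Take the spider with center $c$ and legs $a_1a_2$, $b_1b_2$, $d_1d_2$, where $a_1,b_1,d_1$ are adjacent to $c$. Then $\{a_1,b_1,d_1\}$ is a mutual-visibility set of size $3=\mu(T)$. By Corollary~\ref{cor:spider-layers} the top layer is $P_2\mathbin{\square}P_2\mathbin{\square}P_2\cong\Q_3$, which has eight vertices. Your leaf injection does give $\lvert L(H)\rvert\le\lvert L(T)\rvert$, but equality does not force $L(H)=L(T)$. Your later claim that every leaf of $T$ outside $H$ is hit is also false. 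In $K_{1,3}$ with $H$ the single edge $ca$, the leaf $c$ of $H$ has two leaves of $T$ beyond it.

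\textbf{Comparison with the paper.} The paper does not need the decomposition theorem here. If $S\ne L(T)$, it slides a token $s\in S$ that is not a leaf of $T$ to a neighbor $x\notin V(H_T(S))$. The tree $H_T(S)+sx$ has leaf set $(S\setminus\{s\})\cup\{x\}$, so the slide is admissible by Lemma~\ref{prop:tree-characterization}. Since $\lvert V(H_T(S))\rvert$ grows by one at each step, the process ends at $L(T)$.

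You instead invoke Theorem~\ref{thm:tree-component-decomposition}. Choosing a leaf $y_s$ of $T$ in each $T_s$ gives a configuration with one token per $T_s$, hence in $\C_k(S)$. It consists of $k=\lvert L(T)\rvert$ distinct leaves, hence equals $L(T)$. This is legitimate, since that theorem is proved earlier for all $k\ge3$ without using the present result. You should, however, spell out what you leave implicit:
\begin{itemize}
\item a non-$T$-leaf $s$ has a neighbor $w\notin V(H_T(S))$;
\item the branch of $T-s$ through $w$ misses $H_T(S)$ and contains a leaf $y_s$ of $T$;
\item the $sy_s$-path avoids $B_T(S)\subseteq V(H_T(S))$, so $y_s\in V(T_s)$;
\item the $y_s$ are distinct because the $T_s$ are disjoint.
\end{itemize}
It is cleaner to choose all targets at once relative to the original $S$ and apply the ``equivalently'' clause once. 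Relocating a token to a distant leaf is not a single slide.

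The paper's route is more elementary and yields an explicit slide sequence of length $\lvert V(T)\rvert-\lvert V(H_T(S))\rvert$. Yours yields more structure. The top layer is the single component $\C_{\mu(T)}(L(T))$, hence the Cartesian product of the components of $T-B_T(L(T))$ that contain leaves, which are paths. Its order and diameter then follow from Corollary~\ref{cor:tree-component-parameters}.
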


\begin{proof}
If $3 \le k < \mu(T) = \lvert L(T) \rvert$, then $T$ has at least four leaves and hence a branching vertex.
Lemma~\ref{lemma:tree-components-lower} implies that $\R^k(T)$ is disconnected.
This proves that the stated condition on $k$ is necessary.

Conversely, the layers of cardinality $0$, $1$, and $2$ are connected, as noted in Section~\ref{sec:preliminaries}.
It remains to consider $k = \mu(T)$.
Let $S \in \M_k(T)$.
By Lemma~\ref{prop:tree-characterization}, $S = L(H_T(S))$.
We transform $S$ into $L(T)$ by admissible slides.

If $S \neq L(T)$, then $\lvert S \rvert = \lvert L(T) \rvert$ implies that some $s \in S$ is not a leaf of $T$.
Since $s$ is a leaf of $H_T(S)$, it has a neighbor $x \in V(T) \setminus V(H_T(S))$.
Adjoining $x$ and the edge $sx$ to $H_T(S)$ gives a subtree whose leaf set is $S' = (S \setminus \{s\}) \cup \{x\}$.
This subtree is $H_T(S')$, since a tree is the minimal subtree containing all its leaves.
By Lemma~\ref{prop:tree-characterization}, $S'$ is a mutual-visibility set, so sliding the token from $s$ to $x$ is admissible.

Each such move increases $\lvert V(H_T(S)) \rvert$ by one.
Each move strictly increases $\lvert V(H_T (S)) \rvert$, and such a move exists whenever
$S \ne L(T)$.
Since $T$ is finite, the procedure terminates at $L(T)$.
Every maximum configuration thus reaches $L(T)$, proving connectivity.
\end{proof}

The decomposition is especially explicit for spiders.
A \emph{spider} is a tree with exactly one branching vertex, denoted here by $c$.
Let $r = \deg_T(c) \ge 3$.
The components of $T-c$ are its legs, numbered $1,\ldots,r$.
The $i$-th leg is a path $P_{\ell_i}$ on $\ell_i$ vertices, not including $c$.

\begin{corollary}[Layers of a spider]
\label{cor:spider-layers}
Let $T$ be a spider whose $r \ge 3$ legs have $\ell_1,\ldots,\ell_r$ vertices, respectively.
For every $3 \le k \le r$,
\begin{displaymath}
    \R^k(T) \cong \bigplus_{\substack{I \subseteq \{1,\ldots,r\} \\ \lvert I \rvert = k}} \left(\mathop{\square}_{i \in I} P_{\ell_i} \right).
\end{displaymath}
Consequently, $\R^k(T)$ has exactly $\binom{r}{k}$ connected components.
For each $k$-element subset $I\subseteq\{1,\ldots,r\}$, the corresponding component has order $\prod_{i\in I}\ell_i$ and diameter $\sum_{i\in I}(\ell_i-1)$.
\end{corollary}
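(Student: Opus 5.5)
The plan is to identify $\M_k(T)$ for $k\ge3$ via Lemma~\ref{prop:tree-characterization}, and then to read off the components from Theorem~\ref{thm:tree-component-decomposition}. The leaves of $T$ are exactly the endpoints of the $r$ legs, so $\mu(T)=r$ by Corollary~\ref{cor:cited-tree-mu}, and the range $3\le k\le r$ is the nontrivial range of layers.

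First I would show that, for $k\ge3$, a $k$-set $S$ is a mutual-visibility set if and only if $c\notin S$ and $S$ has at most one vertex in each leg. For necessity, let $S\in\M_k(T)$ with $k\ge3$. Then $S=L(H_T(S))$. If two vertices of $S$ lay in the same leg, that leg is a path, so one of them would lie on the path from the other to the rest of $H_T(S)$. Since $k\ge3$, that rest is nonempty, so the first vertex would be internal in $H_T(S)$ and not a leaf, a contradiction. If $c\in S$, then every other vertex of $S$ lies in some leg, and $c$ would be internal on the path between two of them, again a contradiction. For sufficiency, if $S$ picks one vertex from each of $k\ge3$ distinct legs, then $H_T(S)$ is a spider centred at $c$ whose leaves are exactly $S$. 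Hence $S\in\M_k(T)$.

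Next, for such $S$ with occupied leg set $I$, $|I|=k$, I would compute $B_T(S)=\{c\}$, since $c$ is the only vertex of degree at least $3$ in $H_T(S)$. The components of $T-c$ are the legs, so the subtree $T_s$ is exactly the leg containing $s$, a path $P_{\ell_i}$. Theorem~\ref{thm:tree-component-decomposition} then gives $\mathcal C_k(S)\cong\mathop{\square}_{i\in I}P_{\ell_i}$. It also shows that this component consists precisely of the configurations with one token in each leg of $I$ and no other tokens. Distinct $I$ therefore give distinct components, and every configuration lies in the component indexed by its own leg set. This yields the disjoint union, indexed bijectively by the $k$-subsets $I$, and hence exactly $\binom{r}{k}$ components.

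Finally, Corollary~\ref{cor:tree-component-parameters} gives the order $\prod_{i\in I}\ell_i$ and the diameter $\sum_{i\in I}\diam(P_{\ell_i})=\sum_{i\in I}(\ell_i-1)$. The only step needing real care is the characterization in the second paragraph, specifically the exclusion of two tokens on one leg. Its proof relies on $k\ge3$, which guarantees that a third occupied leg exists beyond the pair.
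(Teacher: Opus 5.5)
Your proposal is correct and follows essentially the same route as the paper. Both use Lemma~\ref{prop:tree-characterization} to characterize the $k$-configurations ($k\ge3$) as those avoiding $c$ with at most one token per leg, observe that $B_T(S)=\{c\}$ so the factors $T_s$ are exactly the chosen legs, and then apply Theorem~\ref{thm:tree-component-decomposition} and Corollary~\ref{cor:tree-component-parameters}. You merely spell out the leaf argument that the paper leaves implicit.
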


\begin{proof}
By Lemma~\ref{prop:tree-characterization}, every mutual-visibility set of cardinality at least three avoids the center $c$ and contains vertices in distinct legs.
Conversely, choosing one vertex in each of any $k$ legs gives a mutual-visibility set.
For such a configuration, the active branching set is $\{c\}$, and the factors in Theorem~\ref{thm:tree-component-decomposition} are precisely the chosen legs.
Distinct choices of legs give distinct components, proving the formulas.
\end{proof}

For cycles, all fixed-cardinality layers are connected.
We prove this using a gap-balancing argument.

\section{Cycles}

Throughout this section, $C_n$ denotes the cycle of order $n$, with vertex set $V(C_n) = \{v_0,v_1,\ldots,v_{n-1}\}$, where the subscripts are taken modulo $n$; we write $\mathbb Z_n$ for the cyclic group of integers modulo $n$.
We fix the orientation from $v_i$ to $v_{i+1}$ to specify the order of the gap-vector entries.
We use the following known value, which is the cycle case of \citet[Lemma~2.8]{DiStefano2022}.

\begin{lemma}[\citealp{DiStefano2022}]
\label{lem:cited-cycle-mu}
For every $n\ge3$, the mutual-visibility number of the cycle $C_n$ is $\mu(C_n)=3$.
\end{lemma}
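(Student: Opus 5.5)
The statement to prove is $\mu(C_n)=3$ for every $n\ge3$. The plan has two parts: an explicit mutual-visibility set of size three gives the lower bound, and a counting argument on gaps shows that four vertices never work.

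\emph{Lower bound.} For $n=3$, $C_3=K_3$ and every subset is a mutual-visibility set, so $\mu(C_3)=3$. For $n\ge4$, I would take three vertices spaced as evenly as possible, namely $v_0$, $v_{\lfloor n/3\rfloor}$ and $v_{\lfloor 2n/3\rfloor}$. These split the cycle into three arcs, each of length at most $\lceil n/3\rceil\le n/2$. For each pair of chosen vertices, the arc between them that avoids the third vertex has length at most $n/2$. Because it is not longer than the complementary arc, it is a geodesic. Its internal vertices avoid the set, so the three vertices are pairwise mutually visible.

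\emph{Upper bound.} Suppose four distinct vertices lie on $C_n$ in cyclic order. They cut the cycle into four arcs with positive lengths $g_1,g_2,g_3,g_4$ summing to $n$. Consider two cyclically consecutive chosen vertices, say those bounding the arc $g_1$. The two paths joining them are the arc $g_1$ and its complement. The complement has length $n-g_1$ and contains the other two chosen vertices as internal vertices. So this pair is mutually visible only if the arc $g_1$ is a geodesic, that is, $g_1\le n/2$.

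Next consider a pair of opposite vertices, say those separating $g_1+g_2$ from $g_3+g_4$. Each of the two paths joining them contains one of the other chosen vertices in its interior. Neither path is therefore usable as a witness, whatever the distances. So no four-element set is a mutual-visibility set, and together with heredity this gives $\mu(C_n)\le3$.

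I expect no real obstacle. The only point needing care is the lower bound, specifically checking $\lceil n/3\rceil\le n/2$ for all $n\ge 4$. This holds because $\lceil n/3\rceil\le (n+2)/3\le n/2$ exactly when $n\ge4$.
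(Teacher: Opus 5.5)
Your proof is correct. The paper gives no argument of its own for this lemma; it only cites Di Stefano, so the comparison is with a citation rather than with a proof. Your lower bound is exactly the criterion of Lemma~\ref{lem:gap-vector-cycles}, which the paper states later, applied to a balanced triple. Writing $n=3q+r$, your triple has gap vector $(q,q,q)$, $(q,q,q+1)$ or $(q,q+1,q+1)$. Your upper bound is the standard obstruction: two non-consecutive vertices among four in cyclic order have another chosen vertex inside each of the only two paths joining them.

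Two small points. State explicitly that every gap is at least $q=\lfloor n/3\rfloor\ge 1$, so the three chosen vertices are distinct. The paragraph on consecutive pairs in the upper bound is unnecessary, because the opposite pair alone rules out every four-element set.

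Compared with the citation, your route costs a few lines. In return it makes the cycle section self-contained and shows directly that the layer $\R^3(C_n)$ analysed afterwards is nonempty.
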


Since the layers of cardinality at most two are connected, it remains to study $\R^3(C_n)$.

Let $S = \{x_1,x_2,x_3\}$ be a set of three vertices of $C_n$, listed in cyclic order according to this orientation.
These vertices divide $C_n$ into three arcs whose interiors contain no vertex of $S$.
Let $g_i$ be the length of the arc from $x_i$ to $x_{i+1}$ with interior disjoint from $S$, taking subscripts modulo $3$.
We call $\gamma(S) = (g_1,g_2,g_3)$ the gap vector of $S$.
Then $g_1,g_2,g_3$ are positive integers and $g_1+g_2+g_3=n$.

The next lemma characterizes visibility by gap vectors.

\begin{lemma}
\label{lem:gap-vector-cycles}
Let $n\ge3$ and let $S$ be a set of three vertices of $C_n$ with gap vector $\gamma(S)=(g_1,g_2,g_3)$.
Then $S$ is a mutual-visibility set if and only if $g_i \le \left\lfloor n/2 \right\rfloor$, for every $i \in \{1,2,3\}$.
\end{lemma}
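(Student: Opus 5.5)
The plan is to reduce visibility of each pair to a statement about arcs. Fix a pair $x_i,x_{i+1}$ of cyclically consecutive vertices of $S$; every pair of the three vertices is consecutive in the cyclic order, so this covers all pairs. The cycle has exactly two $x_ix_{i+1}$-paths. The first is the arc $A_i$ of length $g_i$, whose interior avoids $S$. The second is the complementary arc, of length $n-g_i$, whose interior contains the third vertex of $S$. Hence $x_i$ and $x_{i+1}$ are mutually visible exactly when $A_i$ is a geodesic, that is, when $d_{C_n}(x_i,x_{i+1})=g_i$. Since $d_{C_n}(x_i,x_{i+1})=\min\{g_i,n-g_i\}$, this condition is equivalent to $g_i\le n-g_i$, i.e.\ $g_i\le n/2$. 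Because $g_i$ is an integer, this is the same as $g_i\le\lfloor n/2\rfloor$.

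For the forward direction, assume $S\in\M(C_n)$ and suppose some $g_i>\lfloor n/2\rfloor$. Then $g_i>n/2$, so $n-g_i<g_i$, and the unique $x_ix_{i+1}$-geodesic is the complementary arc. Its interior contains the third vertex of $S$, which is a contradiction.

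For the converse, assume every $g_i\le\lfloor n/2\rfloor$. Then $A_i$ has length $g_i\le n-g_i=d$-competitor, so $A_i$ is a geodesic whose internal vertices avoid $S$. This holds for each of the three consecutive pairs, so $S$ is a mutual-visibility set.

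The only delicate point is the equality case $g_i=n/2$ when $n$ is even. There both arcs are geodesics, and visibility is witnessed by $A_i$, so the non-strict inequality is correct. The small-case check that every pair is cyclically consecutive (true for three points) also needs a brief mention. I expect no real obstacle beyond stating the two-path dichotomy on a cycle cleanly.
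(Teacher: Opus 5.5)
Your proposal is correct and follows the paper's proof: both reduce the visibility of each consecutive pair $x_i,x_{i+1}$ to whether the $S$-free arc of length $g_i$ is a geodesic, that is, whether $g_i\le n-g_i$, and then apply this to all three pairs. The only blemish is the garbled phrase ``$g_i\le n-g_i=d$-competitor'' in your converse, which should simply say that the complementary arc has length $n-g_i\ge g_i$.
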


\begin{proof}
For $i \in \{1,2,3\}$, let $P_i$ be the arc of length $g_i$ joining $x_i$ and $x_{i+1}$ whose interior contains no vertex of $S$.
The complementary arc has length $n-g_i$ and contains the remaining vertex of $S$.
Thus $x_i,x_{i+1}$ are mutually visible exactly when $P_i$ is a geodesic.
This occurs if and only if $g_i \le n - g_i$ or, equivalently, $g_i \le \lfloor n/2 \rfloor$.
Applying the same argument to the three pairs proves the characterization.
\end{proof}

We describe the effect of token slides on gap vectors.
Let $S \in \M_3(C_n)$, with its gap vector ordered according to the fixed orientation. Reversing the orientation reorders entries but preserves visibility and TS adjacency.
Suppose that the arcs corresponding to $g_i$ and $g_j$ share an endpoint $s \in S$, and let $g_\ell$ be the remaining gap.
Sliding $s$ one edge into the $g_i$-arc shortens it by one and lengthens the $g_j$-arc by one.
Writing $\gamma(S')=(g'_1,g'_2,g'_3)$ for the resulting configuration $S'$, we have $g'_i=g_i-1$, $g'_j=g_j+1$, and $g'_\ell=g_\ell$.
The gaps still sum to $n$.
Figure~\ref{fig:gap-vector-dynamics} illustrates this operation for the transformation $(3,4,5) \mapsto (4,3,5)$.

\begin{figure}[H]
\centering
\begin{tikzpicture}[scale = 0.83,
    vertex/.style = {
        circle,
        draw = black,
        fill = white,
        inner sep = 0pt,
        minimum size = 5pt
    },
    redvertex/.style = {
        circle,
        draw = black,
        fill = red,
        inner sep = 0pt,
        minimum size = 5pt
    },
    edge/.style = {
        thick,
        color = black
    },
    arrow/.style = {
        -{Latex[length = 2.5mm]},
        draw = Black,
        line width = 0.9pt
    }
]
    \begin{scope}[shift={(-5,0)}]
        \foreach \k in {0,...,11}
            \coordinate (L\k) at ({30*\k}:1.75);

        \foreach \i/\j in {0/1,1/2,2/3}
            \draw[edge, blue] (L\i)--(L\j);
            
        \foreach \i/\j in {3/4,4/5,5/6,6/7}
            \draw[edge, ForestGreen] (L\i)--(L\j);
            
        \foreach \i/\j in {7/8,8/9,9/10,10/11,11/0}
            \draw[edge, orange] (L\i)--(L\j);
    
        \foreach \k in {0,...,11}
            \node[vertex] at (L\k) {};
        
        \node[redvertex, label = right:{$s_1$}] at (L0) {};
        \node[redvertex, label = above:{$s_2$}] at (L3) {};
        \node[vertex, fill = lightgray, label = above left:{$s_2'$}] at (L4) {};
        \node[redvertex, label = below left:{$s_3$}] at (L7) {};
        
        \node[blue] at ({45}:2.30) {$g_1=3$};
        \node[ForestGreen] at ({150}:2.40) {$g_2=4$};
        \node[orange] at ({-75}:2.22) {$g_3=5$};
    \end{scope}

    \draw[arrow] (-2.2,0)--(2.2,0);
    \node[below = 3pt] at (0,0) {$(3,4,5) \mapsto (4,3,5)$};

    \begin{scope}[shift={(5,0)}]
        \foreach \k in {0,...,11}
            \coordinate (R\k) at ({30*\k}:1.75);
        
        \foreach \i/\j in {0/1,1/2,2/3,3/4}
            \draw[edge, blue]   (R\i)--(R\j);
            
        \foreach \i/\j in {4/5,5/6,6/7}
            \draw[edge, ForestGreen]   (R\i)--(R\j);
            
        \foreach \i/\j in {7/8,8/9,9/10,10/11,11/0}
            \draw[edge, orange] (R\i)--(R\j);
        
        \foreach \k in {0,...,11} 
            \node[vertex] at (R\k) {};
            
        \node[redvertex, label = right:{$s_1$}] at (R0) {};
        \node[vertex, fill = lightgray, label = above:{$s_2$}] at (R3) {};
        \node[redvertex, label = above left:{$s'_2$}] at (R4) {};
        \node[redvertex, label = below left:{$s_3$}] at (R7) {};
        
        \node[blue]   at ({60}:2.30)  {$g'_1=4$};
        \node[ForestGreen] at ({165}:2.35) {$g'_2=3$};
        \node[orange] at ({-75}:2.22) {$g_3=5$};
    \end{scope}
\end{tikzpicture}
\caption{Gap-vector dynamics on $C_{12}$. Let $S = \{s_1,s_2,s_3\}$, with vertices listed in the fixed orientation and gap vector $\gamma(S)=(3,4,5)$. Sliding $s_2$ to the adjacent vertex $s'_2$ gives $S' = (S \setminus \{s_2\}) \cup \{s'_2\}$ and $\gamma(S') = (4,3,5)$, corresponding to $(g_1,g_2,g_3) \mapsto (g_1+1,g_2-1,g_3)$.}
\label{fig:gap-vector-dynamics}
\end{figure}

Assume now that $g_i \ge 2$ and $g_j < \lfloor n/2 \rfloor$.
The first inequality ensures an unoccupied destination and $g_i-1>0$.
The second gives $g_j+1 \le \lfloor n/2 \rfloor$.
The unchanged third gap also remains positive and at most $\lfloor n/2\rfloor$.
Lemma~\ref{lem:gap-vector-cycles} gives mutual visibility.
Therefore, the corresponding token slide is admissible.
We will use these admissible transfers between gaps in the proof below.

\begin{theorem}
For every $n \ge 3$ and every $0 \le k \le \mu(C_n)$, the layer $\R^k(C_n)$ is connected.
\end{theorem}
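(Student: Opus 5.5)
By Lemma~\ref{lem:cited-cycle-mu} and the remarks in Section~\ref{sec:preliminaries}, the layers with $k\le 2$ are connected, so only $\R^3(C_n)$ needs an argument. The plan is a gap-balancing scheme. We fix a canonical target and show that every $S\in\M_3(C_n)$ can be driven to a configuration whose gap vector is as balanced as possible. Write $n=3q+r$ with $r\in\{0,1,2\}$. The balanced vectors are those whose entries lie in $\{q,q+1\}$, and the target set $\mathcal B$ consists of all configurations with a balanced gap vector. For small $n$ ($n=3,4,5$) the layer can be checked by hand; this also covers the cases where $\lfloor n/2\rfloor$ is too small for the transfer rule to apply freely.

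\textbf{Step 1 (balancing).} Take $S\in\M_3(C_n)$ with $\gamma(S)=(g_1,g_2,g_3)$, and use the potential $\Phi(S)=\sum_i g_i^2$. If $\gamma(S)$ is not balanced, then some gap satisfies $g_i\ge g_j+2$ for some $j$. Any two of the three arcs share an endpoint, so the $g_i$-arc and the $g_j$-arc meet at a token. We slide that token one step into the $g_i$-arc. This is an admissible transfer by the criterion stated before the theorem: $g_i\ge g_j+2\ge 3\ge2$, and $g_j\le g_i-2\le\lfloor n/2\rfloor-2<\lfloor n/2\rfloor$. The slide lowers $\Phi$ by $2(g_i-g_j)-2>0$. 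Repeating, we reach $\mathcal B$ after finitely many admissible slides. Note that a balanced vector satisfies the bound of Lemma~\ref{lem:gap-vector-cycles} whenever $n\ge3$.

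\textbf{Step 2 (connecting $\mathcal B$).} This is the main obstacle, because a configuration is determined by its gap vector together with a rotation, e.g.\ the position of $x_1$. We must show that any balanced configuration can be rotated to any other, including a cyclic permutation of its gap vector. The move I would use is a ``rotation by one'': starting from gaps $(g_1,g_2,g_3)$, slide the three tokens forward one at a time. Each single forward slide of $x_{i+1}$ transfers one unit from $g_{i+1}$ to $g_i$. Choosing the order carefully, for instance first moving the token that ends the largest gap's predecessor, keeps every intermediate vector with entries in $\{q-1,q,q+1,q+2\}$. I would then verify that such entries stay within $\lfloor n/2\rfloor$ and remain positive for $n\ge6$, and handle the few small $n$ directly. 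After three slides the gap vector is restored and every token has advanced by one. Iterating gives all rotations of a fixed balanced vector.

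\textbf{Step 3 (permuting the entries).} When $r\ne0$, the balanced vectors with the same multiset of entries must be linked, e.g.\ $(q+1,q,q)$ with $(q,q+1,q)$. A single slide transferring one unit between two adjacent gaps does this, since it swaps a $q$ and a $q+1$ and stays balanced. Combined with Step 1 this shows that all of $\M_3(C_n)$ lies in one component. The delicate part is the bookkeeping of intermediate gaps in Step 2 for small $n$, where $q+2$ may exceed $\lfloor n/2\rfloor$. There I would instead perform the rotation in the order that only ever creates entries in $\{q-1,q,q+1\}$, or argue via Step 3 swaps, which already shift token positions.
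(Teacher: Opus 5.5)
Your proof is correct, but it runs the reduction in the opposite direction from the paper.

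The paper pushes every configuration to the most \emph{unbalanced} feasible pattern. It keeps transferring units out of one fixed gap until that gap equals $1$, using the capacity identity $(t-b)+(t-c)\in\{a-1,a\}$ to guarantee a receiving gap below $\lfloor n/2\rfloor$. For $n=2t+1$ this ends at $(1,t,t)$. For $n=2t$ it ends at the two mirror patterns $(1,t-1,t)$ and $(1,t,t-1)$, which are not cyclic shifts of each other. So, besides the explicit three-slide sequences $A_r\to A_{r+1}$, the paper needs the extra edges $A_rB_r$.

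You instead push toward the \emph{balanced} patterns with the potential $\sum_i g_i^2$. This buys two things:
\begin{itemize}
\item Your admissibility check $g_j\le g_i-2\le\lfloor n/2\rfloor-2$ is uniform in $n$ and needs no capacity count.
\item A balanced triple always has a repeated entry, so all balanced configurations form a single cyclic class and are rotations of one another.
\end{itemize}
In particular your Step~3 is superfluous: rotations already realize cyclic shifts, and a triple with a repeated entry has no other orderings.

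The part you left vague is the order of the slides in Step~2. A clean choice is to label so that $g_1=q$ and slide $x_2,x_3,x_1$ forward in that order. The intermediate vectors are then $(q+1,g_2-1,g_3)$ and $(q+1,g_2,g_3-1)$. These are admissible whenever $g_2,g_3\ge 2$ and $q+1\le\lfloor n/2\rfloor$, that is, for every $n\ge 5$.

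The order genuinely matters, as you suspected. The bound $q+2\le\lfloor n/2\rfloor$ you planned to verify fails at $n=7$: incrementing the $3$ in $(3,2,2)$ produces a gap $4>\lfloor 7/2\rfloor$. With the order above, only $n=3$ and $n=4$ need a separate check, exactly as in the paper.
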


\begin{proof}
The layers of cardinality $0$, $1$, and $2$ are connected.
Since $\mu(C_n) = 3$, it remains to prove that $\R^3(C_n)$ is connected.

For $n=3$, the layer $\R^3(C_3)$ consists of a single vertex.
If $n=4$, then every triple in $C_4$ is mutually visible and determined by its unoccupied vertex.
A token slide moves the unoccupied vertex to a neighbor.
Consequently, $\R^3(C_4)$ is isomorphic to $C_4$.

Assume now that $n \ge 5$.
Let $S \in \M_3(C_n)$ and, after a cyclic relabeling if necessary, write $\gamma(S) = (a,b,c)$.
Each gap is at most
$\lfloor n/2 \rfloor$.

Suppose first that $n = 2t+1$.
The total capacity available in the two other gaps is $(t-b)+(t-c)=a-1$.
As long as $a>1$, at least one of $b$ and $c$ is smaller than $t$.
Transfer one unit from the $a$-gap to such a gap.
This is an admissible token slide by the preceding observation.
After $a-1$ slides, the first gap is $1$; the others are at most $t$ and sum $2t$.
Thus, the resulting gap vector is $(1,t,t)$.

Suppose now that $n=2t$.
In this case, $(t-b)+(t-c) = a$.
While $a>1$, another gap is smaller than $t$, allowing an admissible unit transfer to it.
After $a-1$ such transfers, the first gap has length $1$.
The other gaps sum to $2t-1$ and are at most $t$.
Consequently, their lengths are $t-1$ and $t$, in some order.
The gap vector is thus $(1,t-1,t)$ or $(1,t,t-1)$, up to cyclic permutation.

We now connect all configurations with these gap vectors.
First, let $n = 2t+1$ and, for $r \in \mathbb Z_n$, set $A_r = \{v_r,v_{r+1},v_{r+t+1}\}$.
These are exactly the configurations with cyclic gap pattern $(1,t,t)$, up to cyclic permutation.
In the sequences below, each arrow denotes an admissible TS move. For each $r$, the sequence
\begin{displaymath}
    A_r \rightarrow \{v_r,v_{r+2},v_{r+t+1}\} \rightarrow \{v_r,v_{r+2},v_{r+t+2}\} \rightarrow A_{r+1}
\end{displaymath}
consists of admissible token slides.
Indeed, the two intermediate gap vectors are $(2,t-1,t)$ and $(2,t,t-1)$, respectively.
Hence $A_r$ and $A_{r+1}$ lie in the same component for every $r$.
Thus all $A_r$ are mutually reachable.
Finally, let $n = 2t$, where $t \ge 3$, and define $A_r = \{v_r,v_{r+1},v_{r+t}\}$ and $B_r = \{v_r,v_{r+1},v_{r+t+1}\}$.
The gap vectors of $A_r$ and $B_r$ are $(1,t-1,t)$ and $(1,t,t-1)$, respectively.
Sliding $v_{r+t}$ to $v_{r+t+1}$ gives the edge $A_rB_r$.
Moreover, the sequence
\begin{displaymath}
    A_r \rightarrow \{v_r,v_{r+2},v_{r+t}\} \rightarrow \{v_r,v_{r+2},v_{r+t+1}\} \rightarrow A_{r+1}
\end{displaymath}
also consists of admissible token slides.
The intermediate gap vectors are $(2,t-2,t)$ and $(2,t-1,t-1)$.
Thus all configurations $A_r$ lie in one component; every $B_r$ lies in that component because it is adjacent to $A_r$.
The reduction above joins every vertex of $\R^3(C_n)$ to some $A_r$ when $n$ is odd, and to some $A_r$ or $B_r$ when $n$ is even. In each case, all configurations in these families are mutually reachable.
Thus, $\R^3(C_n)$ is connected.
\end{proof}

This completes the connectivity proof for cycles, in contrast with the branch-occupancy obstructions found for trees.

\section{Concluding remarks}

The daisy-cube TAR skeleton ensures connectivity of $\R(G)$; TS edges may shorten its distances.
We obtained metric bounds and proved that $3 \le \kappa(\R(G)) \le n$ for every connected graph $G$ of order $n \ge 3$, with both bounds attained.
Layer connectivity depends on the host.
When $m,n \ge 3$, every layer of $K_{m,n}$ is connected except the maximum layer when $m,n \ge 4$, which consists of $mn$ isolated vertices.
For trees, every component in a layer of cardinality at least three is a Cartesian product of subtrees determined by active branching vertices.
For cycles, the gap-vector argument proves that every layer is connected.

A configuration is mobile if and only if the configurations in its layer component together cover $V(G)$.
Connectivity of a layer alone does not imply this covering property.
For example, the maximum layer of $K_{1,r}$, with $r \ge 3$, consists of the single configuration formed by all leaves.
This layer is connected, but the center is never occupied along any legal move sequence, so the configuration is not mobile.

For trees, can Theorem~\ref{thm:tree-component-decomposition} yield closed formulas for layer-component counts? For general hosts, open questions concern the complexity of fixed-layer reachability, graphs with every nonempty layer connected, components covering $V(G)$, and whether $\kappa(\R(G))=\delta(\R(G))$.

\section*{AI Disclosure}

We used ChatGPT and Codex (OpenAI) to improve readability
and assist with LaTeX typesetting and figure preparation.
No AI tools contributed to the mathematical content of
the manuscript. The authors reviewed and edited all
AI-assisted output and take full responsibility for
the final manuscript.

\setlength{\bibsep}{2pt}
\bibliographystyle{abbrvnat}
\bibliography{references1}

\end{document}